\numberwithin{figure}{section}
\numberwithin{equation}{section}
\theoremstyle{plain}
\newtheorem{thm}{\protect\theoremname}[section]
\theoremstyle{plain}
\newtheorem{cor}[thm]{\protect\corollaryname}
\theoremstyle{plain}
\newtheorem{lem}[thm]{\protect\lemmaname}
\theoremstyle{plain}
\newtheorem{prop}[thm]{\protect\propositionname}
\theoremstyle{remark}
\newtheorem{rem}[thm]{\protect\remarkname}
\theoremstyle{definition}
\newtheorem{defn}[thm]{\protect\definitionname}
\theoremstyle{remark}
\newtheorem*{acknowledgement*}{\protect\acknowledgementname}
\theoremstyle{remark}
\newtheorem{claim}[thm]{\protect\claimname}
\renewcommand{\paragraph}{%
  \@startsection{paragraph}{4}%
  {\z@}{1.25ex \@plus 1ex \@minus .2ex}{-1em}%
  {\normalfont\normalsize\bfseries}%
}
\providecommand{\acknowledgementname}{Acknowledgement}
\providecommand{\claimname}{Claim}
\providecommand{\corollaryname}{Corollary}
\providecommand{\definitionname}{Definition}
\providecommand{\lemmaname}{Lemma}
\providecommand{\propositionname}{Proposition}
\providecommand{\remarkname}{Remark}
\providecommand{\theoremname}{Theorem}
\begin{document}
\def\balign#1\ealign{\begin{align}#1\end{align}}
\def\baligns#1\ealigns{\begin{align*}#1\end{align*}}
\def\balignat#1\ealign{\begin{alignat}#1\end{alignat}}
\def\balignats#1\ealigns{\begin{alignat*}#1\end{alignat*}}
\def\bitemize#1\eitemize{\begin{itemize}#1\end{itemize}}
\def\benumerate#1\eenumerate{\begin{enumerate}#1\end{enumerate}}

\newenvironment{talign*}
 {\let\displaystyle\textstyle\csname align*\endcsname}
 {\endalign}
\newenvironment{talign}
 {\let\displaystyle\textstyle\csname align\endcsname}
 {\endalign}

\def\balignst#1\ealignst{\begin{talign*}#1\end{talign*}}
\def\balignt#1\ealignt{\begin{talign}#1\end{talign}}

\let\originalleft\left
\let\originalright\right
\renewcommand{\left}{\mathopen{}\mathclose\bgroup\originalleft}
\renewcommand{\right}{\aftergroup\egroup\originalright}

\def\Gronwall{Gr\"onwall\xspace}
\def\Holder{H\"older\xspace}
\def\Ito{It\^o\xspace}
\def\Nystrom{Nystr\"om\xspace}
\def\Schatten{Sch\"atten\xspace}
\def\Matern{Mat\'ern\xspace}

\def\tinycitep*#1{{\tiny\citep*{#1}}}
\def\tinycitealt*#1{{\tiny\citealt*{#1}}}
\def\tinycite*#1{{\tiny\cite*{#1}}}
\def\smallcitep*#1{{\scriptsize\citep*{#1}}}
\def\smallcitealt*#1{{\scriptsize\citealt*{#1}}}
\def\smallcite*#1{{\scriptsize\cite*{#1}}}

\def\blue#1{\textcolor{blue}{{#1}}}
\def\green#1{\textcolor{green}{{#1}}}
\def\orange#1{\textcolor{orange}{{#1}}}
\def\purple#1{\textcolor{purple}{{#1}}}
\def\red#1{\textcolor{red}{{#1}}}
\def\teal#1{\textcolor{teal}{{#1}}}

\def\mbi#1{\boldsymbol{#1}} 
\def\mbf#1{\mathbf{#1}}
\def\mrm#1{\mathrm{#1}}
\def\tbf#1{\textbf{#1}}
\def\tsc#1{\textsc{#1}}

\def\mbiA{\mbi{A}}
\def\mbiB{\mbi{B}}
\def\mbiC{\mbi{C}}
\def\mbiDelta{\mbi{\Delta}}
\def\mbif{\mbi{f}}
\def\mbiF{\mbi{F}}
\def\mbih{\mbi{g}}
\def\mbiG{\mbi{G}}
\def\mbih{\mbi{h}}
\def\mbiH{\mbi{H}}
\def\mbiI{\mbi{I}}
\def\mbim{\mbi{m}}
\def\mbiP{\mbi{P}}
\def\mbiQ{\mbi{Q}}
\def\mbiR{\mbi{R}}
\def\mbiv{\mbi{v}}
\def\mbiV{\mbi{V}}
\def\mbiW{\mbi{W}}
\def\mbiX{\mbi{X}}
\def\mbiY{\mbi{Y}}
\def\mbiZ{\mbi{Z}}

\def\textsum{{\textstyle\sum}} 
\def\textprod{{\textstyle\prod}} 
\def\textbigcap{{\textstyle\bigcap}} 
\def\textbigcup{{\textstyle\bigcup}} 

\def\reals{\mathbb{R}} 
\def\integers{\mathbb{Z}} 
\def\rationals{\mathbb{Q}} 
\def\naturals{\mathbb{N}} 
\def\complex{\mathbb{C}} 

\def\what#1{\widehat{#1}}

\def\twovec#1#2{\left[\begin{array}{c}{#1} \\ {#2}\end{array}\right]}
\def\threevec#1#2#3{\left[\begin{array}{c}{#1} \\ {#2} \\ {#3} \end{array}\right]}
\def\nvec#1#2#3{\left[\begin{array}{c}{#1} \\ {#2} \\ \vdots \\ {#3}\end{array}\right]} 

\def\maxeig#1{\lambda_{\mathrm{max}}\left({#1}\right)}
\def\mineig#1{\lambda_{\mathrm{min}}\left({#1}\right)}

\def\Re{\operatorname{Re}} 
\def\indic#1{\mbb{I}\left[{#1}\right]} 
\def\logarg#1{\log\left({#1}\right)} 
\def\polylog{\operatorname{polylog}}
\def\maxarg#1{\max\left({#1}\right)} 
\def\minarg#1{\min\left({#1}\right)} 
\def\Earg#1{\E\left[{#1}\right]}
\def\Esub#1{\E_{#1}}
\def\Esubarg#1#2{\E_{#1}\left[{#2}\right]}
\def\bigO#1{\mathcal{O}\left(#1\right)} 
\def\littleO#1{o(#1)} 
\def\P{\mbb{P}} 
\def\Parg#1{\P\left({#1}\right)}
\def\Psubarg#1#2{\P_{#1}\left[{#2}\right]}
\def\Trarg#1{\Tr\left[{#1}\right]} 
\def\trarg#1{\tr\left[{#1}\right]} 
\def\Var{\mrm{Var}} 
\def\Vararg#1{\Var\left[{#1}\right]}
\def\Varsubarg#1#2{\Var_{#1}\left[{#2}\right]}
\def\Cov{\mrm{Cov}} 
\def\Covarg#1{\Cov\left[{#1}\right]}
\def\Covsubarg#1#2{\Cov_{#1}\left[{#2}\right]}
\def\Corr{\mrm{Corr}} 
\def\Corrarg#1{\Corr\left[{#1}\right]}
\def\Corrsubarg#1#2{\Corr_{#1}\left[{#2}\right]}
\newcommand{\info}[3][{}]{\mathbb{I}_{#1}\left({#2};{#3}\right)} 
\newcommand{\staticexp}[1]{\operatorname{exp}(#1)} 
\newcommand{\loglihood}[0]{\mathcal{L}} 


\providecommand{\arccos}{\mathop\mathrm{arccos}}
\providecommand{\dom}{\mathop\mathrm{dom}}
\providecommand{\diag}{\mathop\mathrm{diag}}
\providecommand{\tr}{\mathop\mathrm{tr}}
\providecommand{\card}{\mathop\mathrm{card}}
\providecommand{\sign}{\mathop\mathrm{sign}}
\providecommand{\conv}{\mathop\mathrm{conv}} 
\def\rank#1{\mathrm{rank}({#1})}
\def\supp#1{\mathrm{supp}({#1})}

\providecommand{\minimize}{\mathop\mathrm{minimize}}
\providecommand{\maximize}{\mathop\mathrm{maximize}}
\providecommand{\subjectto}{\mathop\mathrm{subject\;to}}

\def\openright#1#2{\left[{#1}, {#2}\right)}

\ifdefined\nonewproofenvironments\else
\ifdefined\ispres\else
 
\fi
\makeatletter
\@addtoreset{equation}{section}
\makeatother
\def\theequation{\thesection.\arabic{equation}}

\newcommand{\cmark}{\ding{51}}

\newcommand{\xmark}{\ding{55}}

\newcommand{\eq}[1]{\begin{align}#1\end{align}}
\newcommand{\eqn}[1]{\begin{align*}#1\end{align*}}
\renewcommand{\Pr}[1]{\mathbb{P}\left( #1 \right)}
\newcommand{\Ex}[1]{\mathbb{E}\left[#1\right]}

\newcommand{\matt}[1]{{\textcolor{Maroon}{[Matt: #1]}}}
\newcommand{\kook}[1]{{\textcolor{blue}{[Kook: #1]}}}
\definecolor{OliveGreen}{rgb}{0,0.6,0}
\newcommand{\sv}[1]{{\textcolor{OliveGreen}{[Santosh: #1]}}}

\global\long\def\on#1{\operatorname{#1}}%

\global\long\def\bw{\mathsf{Ball\ walk}}%
\global\long\def\sw{\mathsf{Speedy\ walk}}%
\global\long\def\gw{\mathsf{Gaussian\ walk}}%
\global\long\def\ps{\mathsf{Proximal\ sampler}}%
\global\long\def\dw{\mathsf{Dikin\ walk}}%

\global\long\def\chr{\mathsf{Coordinate\ Hit\text{-}and\text{-}Run}}%
\global\long\def\har{\mathsf{Hit\text{-}and\text{-}Run}}%
\global\long\def\gc{\mathsf{Gaussian\ cooling}}%
\global\long\def\ino{\mathsf{\mathsf{In\text{-}and\text{-}Out}}}%
\global\long\def\tgc{\mathsf{Tilted\ Gaussian\ cooling}}%
\global\long\def\PS{\mathsf{PS}}%
\global\long\def\psunif{\mathsf{PS}_{\textup{unif}}}%
\global\long\def\psexp{\mathsf{PS}_{\textup{exp}}}%
\global\long\def\psann{\mathsf{PS}_{\textup{ann}}}%
\global\long\def\psgauss{\mathsf{PS}_{\textup{Gauss}}}%
\global\long\def\eval{\mathsf{Eval}}%
\global\long\def\mem{\mathsf{Mem}}%
\global\long\def\unif{\textup{unif}}%

\global\long\def\O{O}%
\global\long\def\Otilde{\widetilde{O}}%
\global\long\def\Omtilde{\widetilde{\Omega}}%

\global\long\def\E{\mathbb{E}}%
\global\long\def\Z{\mathbb{Z}}%
\global\long\def\P{\mathbb{P}}%
\global\long\def\N{\mathbb{N}}%

\global\long\def\R{\mathbb{R}}%
\global\long\def\Rd{\mathbb{R}^{d}}%
\global\long\def\Rdd{\mathbb{R}^{d\times d}}%
\global\long\def\Rn{\mathbb{R}^{n}}%
\global\long\def\Rnn{\mathbb{R}^{n\times n}}%

\global\long\def\psd{\mathbb{S}_{+}^{d}}%
\global\long\def\pd{\mathbb{S}_{++}^{d}}%

\global\long\def\defeq{\stackrel{\mathrm{{\scriptscriptstyle def}}}{=}}%

\global\long\def\veps{\varepsilon}%
\global\long\def\lda{\lambda}%
\global\long\def\vphi{\varphi}%
\global\long\def\K{\mathcal{K}}%

\global\long\def\half{\frac{1}{2}}%
\global\long\def\nhalf{\nicefrac{1}{2}}%
\global\long\def\texthalf{{\textstyle \frac{1}{2}}}%
\global\long\def\ltwo{L^{2}}%

\global\long\def\ind{\mathds{1}}%
\global\long\def\op{\mathsf{op}}%
\global\long\def\ch{\mathsf{Ch}}%
\global\long\def\kls{\mathsf{KLS}}%
\global\long\def\ts{\mathsf{Ts}}%
\global\long\def\hs{\textup{HS}}%

\global\long\def\cpi{C_{\mathsf{PI}}}%
\global\long\def\clsi{C_{\mathsf{LSI}}}%
\global\long\def\cch{C_{\mathsf{Ch}}}%
\global\long\def\clch{C_{\mathsf{logCh}}}%
\global\long\def\cexp{C_{\mathsf{exp}}}%
\global\long\def\cgauss{C_{\mathsf{Gauss}}}%

\global\long\def\chooses#1#2{_{#1}C_{#2}}%

\global\long\def\vol{\on{vol}}%

\global\long\def\law{\on{law}}%

\global\long\def\tr{\on{tr}}%

\global\long\def\diag{\on{diag}}%

\global\long\def\diam{\on{diam}}%

\global\long\def\poly{\on{poly}}%

\global\long\def\polylog{\on{polylog}}%

\global\long\def\Diag{\on{Diag}}%

\global\long\def\inter{\on{int}}%

\global\long\def\esssup{\on{ess\,sup}}%

\global\long\def\proj{\on{Proj}}%

\global\long\def\e{\mathrm{e}}%

\global\long\def\id{\mathrm{id}}%

\global\long\def\supp{\on{supp}}%

\global\long\def\spanning{\on{span}}%

\global\long\def\rows{\on{row}}%

\global\long\def\cols{\on{col}}%

\global\long\def\rank{\on{rank}}%

\global\long\def\T{\mathsf{T}}%

\global\long\def\bs#1{\boldsymbol{#1}}%

\global\long\def\eu#1{\EuScript{#1}}%

\global\long\def\mb#1{\mathbf{#1}}%

\global\long\def\mbb#1{\mathbb{#1}}%

\global\long\def\mc#1{\mathcal{#1}}%

\global\long\def\mf#1{\mathfrak{#1}}%

\global\long\def\ms#1{\mathscr{#1}}%

\global\long\def\mss#1{\mathsf{#1}}%

\global\long\def\msf#1{\mathsf{#1}}%

\global\long\def\textint{{\textstyle \int}}%
\global\long\def\Dd{\mathrm{D}}%
\global\long\def\D{\mathrm{d}}%
\global\long\def\grad{\nabla}%
 
\global\long\def\hess{\nabla^{2}}%
 
\global\long\def\lapl{\triangle}%
 
\global\long\def\deriv#1#2{\frac{\D#1}{\D#2}}%
 
\global\long\def\pderiv#1#2{\frac{\partial#1}{\partial#2}}%
 
\global\long\def\de{\partial}%
\global\long\def\lagrange{\mathcal{L}}%
\global\long\def\Div{\on{div}}%

\global\long\def\Gsn{\mathcal{N}}%
 
\global\long\def\BeP{\textnormal{BeP}}%
 
\global\long\def\Ber{\textnormal{Ber}}%
 
\global\long\def\Bern{\textnormal{Bern}}%
 
\global\long\def\Bet{\textnormal{Beta}}%
 
\global\long\def\Beta{\textnormal{Beta}}%
 
\global\long\def\Bin{\textnormal{Bin}}%
 
\global\long\def\BP{\textnormal{BP}}%
 
\global\long\def\Dir{\textnormal{Dir}}%
 
\global\long\def\DP{\textnormal{DP}}%
 
\global\long\def\Expo{\textnormal{Expo}}%
 
\global\long\def\Gam{\textnormal{Gamma}}%
 
\global\long\def\GEM{\textnormal{GEM}}%
 
\global\long\def\HypGeo{\textnormal{HypGeo}}%
 
\global\long\def\Mult{\textnormal{Mult}}%
 
\global\long\def\NegMult{\textnormal{NegMult}}%
 
\global\long\def\Poi{\textnormal{Poi}}%
 
\global\long\def\Pois{\textnormal{Pois}}%
 
\global\long\def\Unif{\textnormal{Unif}}%

\global\long\def\bpar#1{\bigl(#1\bigr)}%
\global\long\def\Bpar#1{\Bigl(#1\Bigr)}%

\global\long\def\abs#1{|#1|}%
\global\long\def\babs#1{\bigl|#1\bigr|}%
\global\long\def\Babs#1{\Bigl|#1\Bigr|}%

\global\long\def\snorm#1{\|#1\|}%
\global\long\def\bnorm#1{\bigl\Vert#1\bigr\Vert}%
\global\long\def\Bnorm#1{\Bigl\Vert#1\Bigr\Vert}%

\global\long\def\sbrack#1{[#1]}%
\global\long\def\bbrack#1{\bigl[#1\bigr]}%
\global\long\def\Bbrack#1{\Bigl[#1\Bigr]}%

\global\long\def\sbrace#1{\{#1\}}%
\global\long\def\bbrace#1{\bigl\{#1\bigr\}}%
\global\long\def\Bbrace#1{\Bigl\{#1\Bigr\}}%

\global\long\def\Abs#1{\left\lvert #1\right\rvert }%
\global\long\def\Par#1{\left(#1\right)}%
\global\long\def\Brack#1{\left[#1\right]}%
\global\long\def\Brace#1{\left\{  #1\right\}  }%

\global\long\def\inner#1{\langle#1\rangle}%
 
\global\long\def\binner#1#2{\left\langle {#1},{#2}\right\rangle }%

\global\long\def\norm#1{\lVert#1\rVert}%
\global\long\def\onenorm#1{\norm{#1}_{1}}%
\global\long\def\twonorm#1{\norm{#1}_{2}}%
\global\long\def\infnorm#1{\norm{#1}_{\infty}}%
\global\long\def\fronorm#1{\norm{#1}_{\text{F}}}%
\global\long\def\nucnorm#1{\norm{#1}_{*}}%
\global\long\def\staticnorm#1{\|#1\|}%
\global\long\def\statictwonorm#1{\staticnorm{#1}_{2}}%

\global\long\def\mmid{\mathbin{\|}}%

\global\long\def\otilde#1{\widetilde{O}(#1)}%
\global\long\def\wtilde{\widetilde{W}}%
\global\long\def\wt#1{\widetilde{#1}}%

\global\long\def\KL{\msf{KL}}%
\global\long\def\dtv{d_{\textrm{\textup{TV}}}}%
\global\long\def\FI{\msf{FI}}%
\global\long\def\tv{\msf{TV}}%
\global\long\def\TV{\msf{TV}}%

\global\long\def\cov{\on{cov}}%
\global\long\def\var{\on{Var}}%
\global\long\def\ent{\on{Ent}}%

\global\long\def\cred#1{\textcolor{red}{#1}}%
\global\long\def\cblue#1{\textcolor{blue}{#1}}%
\global\long\def\cgreen#1{\textcolor{green}{#1}}%
\global\long\def\ccyan#1{\textcolor{cyan}{#1}}%

\global\long\def\iff{\Leftrightarrow}%
 
\global\long\def\textfrac#1#2{{\textstyle \frac{#1}{#2}}}%

\title{Zeroth-order Logconcave Sampling\date{}\author{Yunbum Kook\\ Georgia Tech\\  \texttt{yb.kook@gatech.edu} \and Santosh S. Vempala\\ Georgia Tech\\ \texttt{vempala@gatech.edu}}}
\maketitle
\begin{abstract}
We study the zeroth-order query complexity of sampling from a general
logconcave distribution: given access to an evaluation oracle for
a convex function $V:\mathbb{R}^{d}\rightarrow\mathbb{R}\cup\{\infty\}$,
output a point from a distribution within $\varepsilon$-distance
to the density proportional to $e^{-V}$. A long line of work provides
efficient algorithms for this problem in TV distance, assuming a pointwise
warm start (i.e., in $\infty$-R\'enyi divergence), and using annealing
to generate such a warm start. Here, we address the natural and more
general problem of using a $q$-R\'enyi divergence warm start to
generate a sample that is $\varepsilon$-close in $q$-R\'enyi divergence.
Our first main result is an algorithm with this end-to-end guarantee
with state-of-the-art complexity for $q=\widetilde{\Omega}(1)$. Our
second result shows how to generate a $q$-R\'enyi divergence warm
start directly via annealing, by maintaining $q$-R\'enyi divergence
throughout, thereby obtaining a streamlined analysis and improved
complexity. Such results were previously known only under the stronger
assumptions of smoothness and access to first-order oracles. We also
show a lower bound for Gaussian annealing by disproving a geometric
conjecture about quadratic tilts of isotropic logconcave distributions. 

Central to our approach, we establish hypercontractivity of the heat
adjoint and translate this into improved mixing time guarantees for
the Proximal Sampler. The resulting analysis of both sampling and
annealing follows a simplified and natural path, directly tying convergence
rates to isoperimetric constants of the target distribution.  

\thispagestyle{empty}

\newpage{}
\end{abstract}
\tableofcontents{}

\thispagestyle{empty}\newpage{}

\section{Introduction}

\setcounter{page}{1}We study the zeroth-order query complexity of
sampling from an \emph{arbitrary} logconcave distribution $\pi$ with
an \emph{arbitrary} output guarantee, assuming access to an evaluation
oracle for $-\log\pi$ up to a fixed unknown additive constant (equivalently,
an evaluation oracle for a convex function $V$, with the goal of
sampling from the distribution with density proportional to $e^{-V}$).
This is a general setting for logconcave sampling---for example,
it includes uniform sampling from convex bodies, which will serve
as a running example throughout this paper. Results in this general
setting provide a theoretical baseline, highlighting the benefits
of additional assumptions often made in other sampling work, such
as log-smoothness (i.e., $-\nabla\log\pi$ is Lipschitz) or access
to first-order oracles (i.e., $-\nabla\log\pi$). Indeed, much of
our work is motivated by gaps between this general case and more favorable
settings where first-order oracles are available for unconstrained
and log-smooth distributions. In practical terms, zeroth-order samplers
are natural default algorithms, since evaluation oracles typically
require only function value computations, making them the most general
and easiest to implement. 

We now make the problem setup more explicit, focusing first on uniform
sampling. Let $\mathcal{K}\subset\mathbb{R}^{d}$ be a convex body
(i.e., a convex and compact set with nonempty interior) containing
the $d$-dimensional unit ball $B_{1}$. Given access to the membership
oracle for $\mathcal{K}$ (which answers whether a queried point belongs
to $\mathcal{K}$) \cite{GLS93geometric}, we ask: how can we sample
uniformly from $\mathcal{K}$, and how many oracle queries are required?
More formally, given a target accuracy $\varepsilon>0$ and a probability
divergence (or distance) $\mathsf{D}$---such as the total variation
($\tv$) distance or $\chi^{2}$-divergence---what is the query complexity
of generating a sample whose law is $\varepsilon$-close to the uniform
distribution $\pi$ over $\mathcal{K}$ in $\mathsf{D}$?

In this work, we propose a uniform sampler that allows us to establish
the first \emph{balanced end-to-end} complexity guarantee, meaning
that the R\'enyi divergence orders of the initial distribution and
the output distribution are the same. This result parallels guarantees
found in sampling research in the log-smooth setting. Next, we provide
a simple annealing scheme that, for the first time, generates a warm
start in $q$-R\'enyi divergence. To this end, we prove hypercontractivity
under simultaneous heat flow, under a logarithmic Sobolev inequality.
Finally, we extend these results to arbitrary logconcave distributions
under evaluation oracles. Alongside, we present a streamlined analysis
leveraging tools from diverse mathematical fields. Combining these
contributions, we establish the improved zeroth-order query complexity
of sampling from an arbitrary logconcave distribution with $\eu R_{q}$-guarantees.

\subsection{Background and results}

In this section, $\pi_{\mathcal{K}}$ denotes the uniform distribution
over a convex body $\mathcal{K}$ with diameter $D$, and $\pi$ refers
to a logconcave distribution over $\mathbb{R}^{d}$. For clarity,
we recall the $q$-R\'enyi divergence, defined as $\eu R_{q}(\mu\mmid\nu):=\frac{q}{q-1}\log\,\norm{\nicefrac{\D\mu}{\D\nu}}_{L^{q}(\nu)}$
for $q>1$, and its monotonicity in $q$, and $2\,\norm{\cdot}_{\tv}^{2}\leq\KL=\lim_{q\downarrow1}\eu R_{q}\leq\eu R_{2}=\log(1+\chi^{2})\leq\chi^{2}\leq\eu R_{\infty}=\log\esssup_{\nu}\nicefrac{\D\mu}{\D\nu}$.
See \S\ref{subsec:Preliminaries} for more details. When a sampler
is initialized with a distribution $\nu_{0}$ to sample from a target
distribution $\nu$, we define the warmness parameter as $M_{q}=\norm{\D\nu_{0}/\D\nu}_{L^{q}(\nu)}$. 

\subsubsection{Uniform sampling from a warm start}

\paragraph{Classical algorithms: Ball walk and Hit-and-run.}

The $\bw$ and $\har$ are well-studied random walks (i.e., Markov
chains) for uniform sampling. These algorithms are analyzed through
the \emph{conductance} framework, where a lower bound on conductance
yields a mixing-time guarantee. Establishing such a lower bound involves
two ingredients: one-step coupling (i.e., the closeness of transition
kernels at two nearby points) and a Cheeger isoperimetric inequality
for the target distribution. The Cheeger constant of a probability
measure $\pi$ over $\Rd$ is defined as
\[
C_{\ch}(\pi):=\inf_{S:\pi(S)\le\frac{1}{2}}\frac{\pi(\de S)}{\pi(S)}\,,
\]
where $S\subset\Rd$ is any open set with smooth boundary, $S_{\eta}=\{x:d(x,S)\leq\eta\}$,
and $\pi(\de S):=\liminf_{\eta\downarrow0}\frac{\pi(S_{\eta})-\pi(S)}{\eta}$.

The $\bw$ was introduced by Lov\'asz \cite{lovasz90compute}; in
each step, it samples a random point uniformly from a ball of fixed
radius around the current point, accepting it if the point lies within
$\mathcal{K}$. Later, Lov\'asz and Simonovits established that the
$\bw$ satisfies $\eu R_{\infty}\to\tv$ with complexity $\poly(d,D,M_{\infty}/\veps)$
\cite[Remark on p.\,398]{LS93random}, meaning the initial warmness
should be given in $\eu R_{\infty}$-divergence and the final output
is $\varepsilon$-close to $\pi_{\mathcal{K}}$ in $\tv$-distance.
They also analyzed an indirect approach that uses the $\bw$ (with
a Metropolis filter) to sample from a logconcave distribution and
then post-processes the sample to bring it closer to $\pi_{\mathcal{K}}$.
The resulting algorithm, also of type $\eu R_{\infty}\to\tv$, was
shown to have query complexity $\poly(d,D,\log\nicefrac{M_{\infty}}{\veps})$.

This approach was further improved by Kannan, Lov\'asz, and Simonovits
\cite{KLS97random}, who introduced the \emph{$\sw:\eu R_{\infty}\to\tv$.}
Similar to \cite{LS93random}, it begins by sampling from a biased
logconcave variant $\pi'$ (of $\pi_{\K}$), which is invariant under
the following per-step transition: the next point $x_{i+1}$ is sampled
uniformly from $\K\cap B_{\delta}(x_{i})$ via rejection sampling,
using the uniform distribution over $B_{\delta}(x_{i})$ as the proposal.
Then, a post-processing step is used to reduce the bias. They showed
that the mixing time of this chain is $\O(d^{2}\cch^{-2}(\pi')\log\nicefrac{M_{\infty}}{\veps})$
and that each rejection sampling step requires $\O(M_{\infty})$ queries,
leading to an overall complexity of $\Otilde(M_{\infty}d^{2}D^{2}\log\nicefrac{1}{\veps})$
(due to $\cch(\pi)\gtrsim D^{-1}$ \cite{LS93random}). Hereafter,
by following convention, the $\bw$ refers to the $\sw$ with the
post-processing step. Later, progress on the Kannan-Lov\'asz-Simonovits
(KLS) conjecture \cite{KLS95isop} improved this bound to $\cch^{2}(\pi)\gtrsim(\norm{\cov\pi}\log d)^{-1}$
\cite{Klartag23log}. As a result, the complexity of the $\bw$ is
now $\Otilde(M_{\infty}d^{2}\norm{\cov\pi}\log\nicefrac{1}{\veps})$.

The $\har$ algorithm \cite{Smith84HAR} was first analyzed by Lov\'asz
\cite{Lovasz99hit} via $s$-conductance \cite{LS90mixing}, with
complexity $\Otilde(M_{\infty}d^{2}D^{2}/\varepsilon^{2})$. Lov\'asz
and Vempala \cite{LV06hit} showed that the conductance of $\har$
is in fact $\Omega(1/dD)$ resulting in a mixing time of $\Otilde(d^{2}D^{2}\log\nicefrac{M_{2}}{\veps})$.
Each step of $\har$ selects a random chord passing the current point
and moves to a random point on the chord. Here, uniform sampling from
the chord is implemented via rejection sampling with complexity $\O(\log D)$,
where the proposal distribution is uniform over a slightly longer
chord (found by binary search). They established a novel Cheeger isoperimetric
inequality in terms of the cross-cut ratio (rather than Euclidean
distance) and showed $\har:\eu R_{2}\to\eu R_{2}$ with complexity
$\Otilde(d^{2}D^{2}\log\nicefrac{M_{2}}{\veps})$, making it the first
random walk with a balanced guarantee (and to date the only walk with
logarithmic dependence on both the initial warmness and the target
error). From an $\O(1)$-warm start in $\eu R_{\infty}(\geq\eu R_{2})$,
the $\bw$ currently has better complexity than $\har$.

Despite these and many other advances over the past four decades,
a unified theory for the complexity of this fundamental problem is
missing. First, the $\bw$ takes an indirect approach to uniform
sampling: it samples from a biased distribution to obtain a mixing
guarantee, followed by a post-processing step to reduce the bias.
Second, conductance-based analyses require an algorithmic modification
making the chains \emph{lazy} (i.e., the chain remains at its current
point with some probability). It would be desirable to develop a sampling
algorithm that avoids these additional processing steps. Finally,
can we replace the ``unbalanced'' guarantee of the $\bw$ (i.e., $\eu R_{\infty}\to\tv$)
to an arguably more natural $\eu R_{q}\to\eu R_{q}$ guarantee for
any desired $q$?  This question becomes even more natural when considering
recent developments in sampling under log-smoothness.

\paragraph{Sampling under smoothness.}

Research in sampling has made substantial progress by analyzing discretizations
of continuous-time processes. A central example is the \emph{Langevin
diffusion} $(X_{t})_{t\geq0}$ in $\Rd$, described by the stochastic
differential equation
\[
\D X_{t}=-\nabla V(X_{t})\,\D t+\sqrt{2}\,\D B_{t}\,,\qquad X_{0}\sim\nu_{0}\,,
\]
where $V:\Rd\to\R$ is a potential function satisfying mild regularity
conditions, and $B_{t}$ denotes Brownian motion. The law $\nu_{t}$
of $X_{t}$ converges to the stationary distribution $\nu\propto e^{-V}$
as $t\to\infty$. A spectral analysis of the infinitesimal generator
for this process yields a connection between the rate of convergence
and the \emph{Poincar\'e constant of $\nu$} (see \cite[\S4]{BGL14analysis}).
A probability measure $\pi$ on $\Rd$ is said to satisfy a Poincar\'e
inequality\emph{ }with constant $C$ if for any locally Lipschitz
function $f\in L^{2}(\pi)$,
\begin{equation}
\var_{\pi}f:=\int\Bpar{f-\int f\,\D\pi}^{2}\,\D\pi\leq C\int\abs{\nabla f}^{2}\,\D\pi\,,\tag{\ensuremath{\msf{PI}}}\label{eq:pi}
\end{equation}
and the smallest such $C$ is called the Poincar\'e constant $\cpi(\pi)$.
The seminal work of Jordan, Kinderlehrer, and Otto \cite{JKO98variational}
interpreted Langevin diffusion as the $2$-Wasserstein gradient flow
of $\KL(\cdot\mmid\nu)$, revealing a connection to the \emph{Logarithmic
Sobolev Inequality} (LSI), which is stronger than the Poincar\'e
inequality. A probability measure $\pi$ on $\Rd$ is said to satisfy
a logarithmic Sobolev inequality with constant $C$ if for any locally
Lipschitz function $f\in L^{2}(\pi)$,
\begin{equation}
\ent_{\pi}(f^{2}):=\int f^{2}\log f^{2}\,\D\pi-\int f^{2}\,\D\pi\cdot\log\int f^{2}\,\D\pi\leq2C\int\abs{\nabla f}^{2}\,\D\pi\,,\tag{\ensuremath{\msf{LSI}}}\label{eq:lsi}
\end{equation}
and the smallest such $C$ is referred to as the log-Sobolev constant
$\clsi(\pi)$. Under \eqref{eq:pi} and \eqref{eq:lsi} respectively,
the Langevin process converges exponentially:
\begin{equation}
\chi^{2}(\nu_{t}\mmid\nu)\leq\exp\bpar{-\frac{2t}{\cpi(\nu)}}\,\chi^{2}(\nu_{0}\mmid\nu)\,,\quad\text{and}\quad\KL(\nu_{t}\mmid\nu)\leq\exp\bpar{-\frac{2t}{\clsi(\nu)}}\,\KL(\nu_{0}\mmid\nu)\,.\label{eq:Langevin-contraction}
\end{equation}
These bounds were extended to $\eu R_{q}$-divergence in \cite{CLL19exponential,VW23rapid}.

With the assumption of bounded smoothness (i.e., $\beta:=\norm{\hess V}<\infty$),
a short-time discretization of the Langevin process yields an algorithm
that preserves the balanced guarantees of the continuous process.
Specifically, it has been shown to converge in $\KL$ (i.e., $\eu R_{1}\to\eu R_{1}$)
with complexity $\poly(d/\veps,\beta,\clsi(\nu),\log\KL(\nu_{0}\mmid\nu))$
\cite{VW23rapid}. This result was further extended to $\eu R_{q}$-divergence
\cite{CEL24analysis} achieving a query complexity of $\poly(qd/\veps,\beta,\cpi(\nu),\eu R_{q}(\nu_{0}\mmid\nu))$
or $\poly(qd/\veps,\beta,\clsi(\nu),\log\eu R_{q}(\nu_{0}\mmid\nu))$.
We note that the dependence on $\nicefrac{1}{\veps}$ can be made
polylogarithmic by incorporating a Metropolis filter (e.g., $\msf{MALA}$
\cite{CLA21opitmal,WSC22minimax,CG23simple}) or by considering $\msf{MHMC}$
\cite{CV22optimal,CDW20fast,CG23when}). For a comprehensive review,
we refer readers to the monograph \cite{chewi25log}.

The complexity of the $\bw$ can be described as $\poly(qd/\veps,\beta,\cpi(\nu),\eu R_{q}(\nu_{0}\mmid\nu))$.
In fact, for logconcave probability measures $\pi$, it holds that
\[
\frac{1}{4}\leq\frac{C_{\ch}^{-2}(\pi)}{\cpi(\pi)}\leq9\,,
\]
where the first inequality is due to Cheeger \cite{Chee70lower},
and the second follows from the Buser--Ledoux inequality \cite{Buser82iso,Led04spectral}.
Thus, $\cpi$ could be replaced with $\cch^{-2}$, although it remains
unclear how to interpret the smoothness parameter $\beta$ in the
context of convex-body sampling. 

\paragraph{The proximal sampler.}

Continuing this line of work based on algorithmic diffusion, the $\ps$
($\PS$) \cite{LST21structured} has led to substantial progress on
sampling. For a target distribution $\pi^{X}\propto e^{-V}$ over
$\Rd$, $\PS$ considers the augmented distribution $\pi(x,y)\propto\pi^{X}(x)\,\gamma_{h}(y-x)$
over $\Rd\times\Rd$, where $\gamma_{h}$ denotes the centered Gaussian
with covariance $hI_{d}$. Each iteration consists of two steps: for
step size $h>0$, 
\[
\mbox{(i) }y_{k+1}\sim\pi^{Y|X=x_{k}}=\mc N(x_{k},hI_{d}),\quad\mbox{ and }\quad\mbox{(ii) }x_{k+1}\sim\pi^{X|Y=y_{k+1}}.
\]
When $\pi^{X}$ is $\beta$-log-smooth, rejection sampling for the
backward step $x_{i+1}\sim\pi^{X|Y=y_{i+1}}$ can be performed using
$\O(1)$ evaluation queries and one proximal query\footnote{The proximal query for a function $V:\Rd\to\R$ and step size $h>0$
is the function defined as $x\in\Rd\mapsto\arg\min\{V(\cdot)+\frac{1}{2h}\,\norm{\cdot-x}^{2}\}$. } for \emph{any} $y_{i+1}$ and $h\asymp(\beta d)^{-1}$. 

Further refinements were obtained by \cite{CCSW22improved}, which
showed that one iteration of $\PS$ can be interpreted as the composition
of two continuous-time stochastic processes---Brownian motion and
its time-reversal. Since Brownian motion naturally defines a heat
semigroup $(P_{t})_{t\geq0}$ from $L^{2}(\nu*\gamma_{t})$ to $L^{2}(\nu)$,
as $P_{t}f:=f*\gamma_{t}$, and the time-reversal also admits a semigroup
$(Q_{t})_{t\geq0}$ from $L^{2}(\nu)$ to $L^{2}(\nu*\gamma_{t})$,
defined as $Q_{t}f:=P_{t}(f\nu)/P_{t}\nu$ \cite{KP23spectral}, this
allowed \cite{CCSW22improved} to carry out semigroup calculus, leading
to contraction results similar to \eqref{eq:Langevin-contraction}.
In particular, this framework yields query complexity of $\O(qd\beta\cpi(\nu)\,\eu R_{q}(\nu_{0}\mmid\nu)\log\nicefrac{1}{\veps})$
for achieving $\veps$-closeness in $\eu R_{q}$ (i.e., $\PS:\eu R_{q}\to\eu R_{q}$
under \eqref{eq:pi} and smoothness). See \S\ref{subsec:intro-semigroup}
for further details on these semigroups.

\paragraph{Proximal sampler for uniform sampling.}

Convex body sampling does not satisfy bounded smoothness. Kook, Vempala,
and Zhang \cite{KVZ24INO} adapted these ideas to convex body sampling
by introducing the $\ino$ algorithm. They extended the contraction
result to constrained distributions and developed a new efficient
implementation for sampling from $\pi^{X|Y=y}\propto\gamma_{h}(\cdot-y)|_{\K}$.
They introduced a threshold parameter $\tau$: if the number of attempts
in the rejection sampling subroutine with proposal $\mc N(y,hI_{d})$
exceeds $\tau$, then $\ino$ halts and declares failure. To control
this failure probability $\eta\in(0,1)$, they carefully set the parameters
$h,\eta,\tau$ so that with high probability, the backward step can
be completed using $\Otilde(M_{\infty}\polylog\nicefrac{dD}{\eta\veps})$
queries. Consequently, they established $\ino:\eu R_{\infty}\to\eu R_{q}$
with query complexity $\Otilde(qM_{\infty}d^{2}\Lambda\polylog\nicefrac{D}{\eta\veps})$
for $\Lambda:=\norm{\cov\pi}$, generalizing $\bw$'s output guarantee.
Later, Kook and Zhang \cite{KZ25Renyi} established a balanced guarantee
of $\ino:\eu R_{\infty}\to\eu R_{\infty}$ using \eqref{eq:lsi},
with query complexity $\Otilde(M_{\infty}d^{2}D^{2}\polylog\nicefrac{1}{\eta\veps})$.

Nevertheless, some algorithmic tricks (e.g., the failure probability)
still remain in $\ino$. Moreover, the required initial warmness
remains the stringent $\eu R_{\infty}$. Kook and Vempala \cite{KV25faster}
relaxed the warmness to $\eu R_{c}$ with $c=2+\polylog\nicefrac{dD}{\eta\veps}=\Otilde(1)$
for achieving $\veps$-closeness in $\eu R_{2}$ (i.e., $\ino:\eu R_{c}\to\eu R_{2}$
under \eqref{eq:pi}), with query complexity $\Otilde(M_{c}d^{2}\Lambda\polylog\nicefrac{1}{\eta\veps})$
. This result raised the question and gave hope for fully balancing
the warmness and output guarantee.

\paragraph{Proximal sampler with restart for balanced guarantees (\S\ref{sec:ps-unif}).}

We propose the sampler, $\psunif$---essentially $\ino$ with failure
replaced by \emph{restart}: starting with $x_{0}\sim\pi_{0}^{X}$,
step size $h$, and threshold $\tau$, 
\begin{itemize}
\item {[}Forward{]} $y_{k+1}\sim\pi^{Y|X=x_{k}}=\gamma_{h}(\cdot-x_{k})$.
\item {[}Backward{]} $x_{k+1}\sim\pi^{X|Y=y_{k+1}}\propto\gamma_{h}(\cdot-y_{k+1})\,\ind_{\K}(\cdot)$.
To this end, draw $x_{k+1}\sim\mc N(y_{k+1},hI_{d})$ repeatedly until
$x_{k+1}\in\K$. If this rejection loop exceeds $\tau$ trials, declare
failure and restart scratch from a new $x_{0}\sim\pi_{0}^{X}$.
\end{itemize}
One \emph{iteration} refers to the successful completion of both steps.
Note that each check $x_{k+1}\in\K$ incurs a single query to the
membership oracle.

This algorithm always succeeds and has a complexity guarantee from
an $O(1)$-warm start, where the R\'enyi divergence orders of initial
warmness and output distribution guarantee are the same (i.e., $\psunif:\eu R_{q}\to\eu R_{q}$
under \eqref{eq:pi}). This is our first main result. Hereafter,
we use $B_{1}$, $B_{2}$, or $B_{2}^{d}$ to denote a $\ell_{2}$-unit
ball in $\Rd$.
\begin{thm}
[Uniform sampling from a warm start]\label{thm:unif-warm} Let $\pi$
be the uniform distribution over a convex body $\K\subset\Rd$ given
by a membership oracle with $B_{1}\subset\K$ and $\Lambda=\norm{\cov\pi}$.
Given $\veps\in(0,1/100)$, an initial distribution $\pi_{0}$, and
$q\geq2\vee\Omtilde(\log(d^{2}\Lambda\log\frac{1}{\veps}))$ such
that $M_{q}=\norm{\nicefrac{\D\pi_{0}}{\D\pi}}_{L^{q}(\pi)}\leq10$,
consider the algorithm that runs $\psunif$ initialized at $\pi_{0}$
and restarts if $\psunif$ fails before completing $N=\Otilde(qd^{2}\Lambda\log^{2}\frac{1}{\veps})$
iterations. There exist choices of $h$ and $\tau$ such that this
algorithm restarts with probability at most $1/50$, and its output
$Z$ satisfies $\eu R_{q}(\law Z\mmid\pi)\leq\veps$, using $\Otilde(qd^{2}\Lambda\log^{6}\frac{1}{\veps})$
\textup{total membership queries in expectation}.
\end{thm}

\subsubsection{Improved warm-start generation via annealing}

$\ino$ exhibits polynomial dependence on $M_{\Otilde(1)}$, and a
``cold'' start (e.g., the uniform distribution over a small ball)
yields $M_{\Otilde(1)}\lesssim\exp(d,D)$. Hence, efficient warm-start
generation is necessary to reduce the complexity overhead incurred
by a cold start, so warm-start generation algorithms have been developed
alongside sampling algorithms. 

\paragraph{Warm-start generation in the well-conditioned setting.}

The necessity of warmness is not limited to this general setting;
its importance also arises in the well-conditioned setting (i.e.,
$-\log\pi$ is $\alpha$-strongly convex and $\beta$-smooth). In
fact, $\msf{MALA}$ from an $\O(1)$-warm start in $\eu R_{2}$ can
take a step size $h\asymp(\beta d^{1/2})^{-1}$, achieving the first-order
complexity of $\Otilde(\kappa d^{1/2}\polylog\nicefrac{1}{\veps}$)
in the well-conditioned setting. In contrast, from a feasible start,
it can only take a smaller step size $h\asymp(\beta d)^{-1}$, resulting
in $\Otilde(\kappa d\polylog\nicefrac{1}{\veps})$ complexity, where
$\kappa:=\beta/\alpha$ denote the condition number of $-\log\pi$.
Recently, \cite{AC24faster} showed that combining the Underdamped
Langevin Algorithm with $\PS$ can generate an $\O(1)$-warm start
in $\eu R_{2}$-divergence using $\Otilde(\kappa d^{1/2})$ first-order
queries, thus incurring no overhead for $\msf{MALA}$ as a result.

\paragraph{Simulated annealing.}

In our general setup, a classical strategy for warm-start generation
is an \emph{annealing} scheme, which involves a sequence of probability
distributions $\{\mu_{i}\}_{i\leq m}$ to interpolate between an easy-to-sample
initial distribution $\mu_{0}$ and the target distribution $\pi_{\K}$.
The key idea is to ensure that each pair of consecutive distributions
$\mu_{i}$ and $\mu_{i+1}$ is $\O(1)$-close in some appropriate
metric (e.g., $\eu R_{\infty}$-divergence), thereby enabling efficient
transitions by a sampler along the sequence.

In the context of convex-body sampling, several annealing schemes
have been proposed: (i) uniform distributions over truncated convex
bodies intersected with balls of increasing radii~\cite{DFK91random,LS90mixing,LS93random,KLS97random},
(ii) exponential distributions of the form $e^{-cx}$ with decreasing
$c$ over a convex set, derived from the so-called ``pencil'' construction~\cite{LV06simulated},
and (iii) Gaussian distributions with increasing variance, truncated
to the body~\cite{CV15bypass,CV18Gaussian,KZ25Renyi,KV25faster}.

Among these, all approaches except~\cite{KZ25Renyi} provide a warm
start $\mu$ satisfying $\norm{\mu-\pi_{\K}}_{\tv}\leq\varepsilon$,
while \cite{KZ25Renyi} guarantees $\eu R_{\infty}$-warmness with
complexity $\Otilde(d^{2}R^{2})$ for $R^{2}=\E_{\pi_{\K}}[\abs{\cdot}^{2}]$.
\cite{KV25faster} show that $\tv$-warmness can be achieved with
the better complexity of $\Otilde(d^{2}R^{3/2}\Lambda^{1/4}\polylog\nicefrac{1}{\veps})$
. Currently, these are the only known guarantees, and there is no
bound for intermediate warmness results, between $\tv$ and $\eu R_{\infty}$
(i.e., either very weak or strong warmness).

\paragraph{Towards genuine warmness.}

Why do we even care about $\eu R_{q}$-warmness? Given the gap in
known complexities between the two extreme settings discussed above,
bridging this gap is theoretically intriguing in its own right. In
our general setup, it is not clear whether generating $\eu R_{q}$-warmness
without significant overhead is feasible, let alone whether one can
tighten the complexities of warm-start generation and sampling from
a warm start.

Moreover, $\tv$-warmness arguably serves only as a ``pseudo'' warm-start
for downstream sampling, since subsequent sampling guarantees using
this warmness are only in $\tv$-distance due to the necessity of
applying triangle inequalities. In practice, this limitation leads
to accumulation of sampling errors, resulting in somewhat cumbersome
post-processing, such as replacing the original target accuracy $\veps$
by a reduced accuracy $\veps/m$, where $m$ is the number of times
the triangle inequality is applied. While this does not significantly
affect downstream guarantees for applications (since $\varepsilon^{-1}$
is inside a logarithm), it is a bit unsatisfactory and makes the analysis
more complicated.

One might instead generate a $\eu R_{\infty}$-warm start and then
apply the known $\eu R_{\infty}\to\eu R_{q}$ guarantee (see Figure~\ref{fig:complexity-summary}).
However, generating $\eu R_{\infty}$-warmness incurs a higher query
complexity than $\tv$-warmness, making it an unnecessarily expensive
route to an $\eu R_{q}$-guarantee. Lastly, when combined with $\psunif$,
which achieves balanced guarantees, a $\eu R_{q}$-warmness result
would yield clean \emph{end-to-end} guarantees for uniform sampling
in any desired $\eu R_{q}$-divergence.

\begin{figure}[t]
\centering \begin{tikzpicture}[>=Stealth]
\useasboundingbox (-2, -8) rectangle (9.5, 0.5); 
\node (Whdr) at (-1, 0) {\large\bfseries Warmness};
\node (Ohdr) at (10, 0) {\large\bfseries Output guarantee};
 
 
\node (Linf) at (-1, -1.5) {\large $\eu R_\infty$};
\node[anchor=north, text width=6cm, align=center] at (-3, -1) {%
    {\small \cite{KV25sampling}\ $d^2 \bar R^2$}\\
    {\small \cite{KZ25Renyi}$^\dagger$\ $d^2 R^2$}};
 
\node (Lq) at (-1, -4) {\large $\eu R_{q}$};
\node[anchor=north, text width=6cm, align=center] at (-3, -3.6) {%
    {\small\bfseries\color{BrickRed} New:\ $qd^2 \bar R^{3/2}\, \bar\lambda^{1/4}$}};
 
\node (Lc) at (-1, -5.5) {\large $\eu R_{O(1)}$};
 
\node (Ltv) at (-1, -7) {\large $\tv$};
\node[anchor=north, text width=6cm, align=center] at (-3, -6.5) {%
    {\small \cite{KV25faster}\ $d^2 \bar R^{3/2}\, \bar\lambda^{1/4}$}\\
    {\small \cite{CV18Gaussian}$^\dagger$\ $d^2 R^2$}};
 
 
\node (Rinf) at (10, -1.5) {\large $\eu R_\infty$};
\node (Rq)   at (10, -4) {\large $\eu R_{q}$};
\node (R2)   at (10, -6) {\large $\eu R_{2}$};
\node (Rtv)  at (10, -7)  {\large $\tv$};
 
 
\draw[->, thick] (Linf) -- (Rinf)
    node[fill=white, inner sep=1.5pt, pos=0.5, above=2pt]
        {\small \cite{KV25sampling}\ $d^2 \bar R^2$}
    node[fill=white, inner sep=1.5pt, pos=0.5, below=2pt]
        {\small \cite{KZ25Renyi}$^\dagger$\ $d^2 R^2$};
 
\draw[->, thick] ($(Linf.east)+(0.15,-0.25)$) -- ($(Rq.west)+(-0.15,0.15)$);
\node[fill=white, inner sep=1.5pt] at (7, -2.5)
    {\small \cite{KV25sampling}\ $qd^2 \bar\lambda$};
\node[fill=white, inner sep=1.5pt] at (7, -3.05)
    {\small \cite{KVZ24INO, KVZ26INO}$^\dagger$\ $qd^2 \lambda$};
 
\draw[->, very thick, BrickRed] (Lq) -- (Rq)
    node[fill=white, inner sep=2pt, pos=0.2, above=2pt]
        {\small\bfseries\color{BrickRed} New:\ $qd^2 \bar\lambda$};
 
\draw[->, thick] (Lc) -- (R2)
    node[fill=white, inner sep=1.5pt, pos=0.3, below=2pt, sloped]
        {\small \cite{KV25faster}\ $d^2 \bar{\lambda}$};
 
\draw[->, thick, blue] ($(Linf.east)+(0.15,-0.45)$) -- ($(Rtv.west)+(-0.15,0.15)$)
    node[fill=white, inner sep=1.5pt, pos=0.57, below=2pt, sloped]
        {\small $\msf{Ball\ walk}^\dagger$\ $d^2 \lambda$};
 \end{tikzpicture}

\caption{\label{fig:complexity-summary}Zeroth-order query complexities for
logconcave sampling. Arrows indicate the sampling complexity from
an $O(1)$-warm start. Complexities listed next to each left node
indicate warmness-generation costs. The dagger sign ($\dagger$) indicates
results for \emph{uniform} sampling over convex bodies (membership
oracle). Results without $\dagger$ hold for general logconcave distributions
(evaluation oracle). Here, $\bar{R}:=R\vee1$, $\bar{\lambda}:=\lambda\vee1$,
$\lambda:=\protect\norm{\protect\cov\pi}$, and $R^{2}:=\mathbb{E}_{\pi}[|\cdot|^{2}]$
(so $\protect\lda\protect\leq R^{2}$). Note that $\protect\tv$-warmness
can be treated as $\protect\eu R_{\infty}$-warmness at the cost of
collapsing downstream guarantees to $\protect\tv$-distance.}
\end{figure}

\paragraph{$\protect\tv$-collapse.}

A central difficulty in obtaining $\eu R_{q}$-warmness through annealing
is that we do not yet know how to relay $\eu R_{q}$-warmness without
compromise, leading nearly all previous work to encounter this $\tv$-collapse
problem. To illustrate this, consider the annealing distributions
$\mu_{0},\mu_{1},\mu_{2}$, where a sampler transitions from $\mu_{0}$
toward $\mu_{1}$. Markov-chain samplers for convex bodies are inherently
\emph{approximate}, producing a measure $\bar{\mu}_{1}$ close to
but not exactly $\mu_{1}$, quantified via $\eu R_{q}(\bar{\mu}_{1}\mmid\mu_{1})\leq\veps$
for some $q>1$. In the next phase, the sampler moves toward $\mu_{2}$
starting from the approximate measure $\bar{\mu}_{1}$, not $\mu_{1}$.
Ideally, the complexity for this phase would directly depend on $\eu R_{q}(\bar{\mu}_{1}\mmid\mu_{2})$.
Unfortunately, the lack of a triangle inequality for $\eu R_{q}$-divergence
prevents us from ensuring $\eu R_{q}(\bar{\mu}_{1}\mmid\mu_{2})=O(1)$,
even if $\eu R_{q}(\mu_{1}\mmid\mu_{2})=O(1)$.

To circumvent this issue, prior studies have resorted to using the
triangle inequality (or a coupling argument) for $\tv$-distance.
Specifically, although $\bar{\mu}_{1}$ differs from $\mu_{1}$, annealing
algorithms proceeds as if the initial distribution were $\mu_{1}$,
not $\bar{\mu}_{1}$, when running a sampler toward $\mu_{2}$. Denoting
the transition kernel of this sampler by $P$ and the output distribution
of this sampler by $\bar{\mu}_{2}=\bar{\mu}_{1}P$, we can deduce
that 
\[
\norm{\bar{\mu}_{2}-\mu_{2}}_{\tv}\leq\norm{\bar{\mu}_{2}-\mu_{1}P}_{\tv}+\norm{\mu_{1}P-\mu_{2}}_{\tv}\leq\norm{\bar{\mu}_{1}-\mu_{1}}_{\tv}+\norm{\mu_{1}P-\mu_{2}}_{\tv}\,,
\]
where the second inequality follows from the data-processing inequality
\eqref{eq:DPI}. Since $\eu R_{q}$-divergence implies $\tv$-distance,
and each term on the RHS is bounded by $\veps$, this leads to the
accumulation of error over $m$ phases: $\norm{\bar{\mu}_{m}-\mu_{m}}_{\tv}\leq m\veps$.
This necessitates accuracy adjustments like replacing $\veps$ with
$\veps/m$.

\paragraph{Boosting the R\'enyi divergence order.}

The above discussion highlights the fundamental bottleneck arising
from the absence of a triangle inequality for R\'enyi divergences.
One promising direction to overcome this is to leverage a weak triangle
inequality \eqref{eq:weak-triangle-Renyi}, namely, $\eu R_{q}(\bar{\mu}_{1}\mmid\mu_{2})\lesssim\eu R_{2q-1}(\bar{\mu}_{1}\mmid\mu_{1})+\eu R_{2q}(\mu_{1}\mmid\mu_{2})$.
As \cite{KV25faster} has already designed annealing schemes that
can ensure $\eu R_{2q}(\mu_{1}\mmid\mu_{2})=O(1)$, one might consider
starting with a $\eu R_{2q-1}$-guarantee when sampling from $\mu_{1}$.
However, this effectively amounts to requiring a $\eu R_{\Theta(2^{m}q)}$-guarantee
for $\mu_{1}$-sampling when there are $m$ phases. Since $\PS$ has
linear dependence on $q$, and prior annealing approaches require
$m\gtrsim d^{1/2}$, such a ``backtracking'' analysis would potentially
incur an $\Omega(2^{\sqrt{d}}q)$-multiplicative overhead in the final
complexity.

Given the failure of this na\"ive approach, a pertinent question
is whether one can ensure $\eu R_{2q-1}(\bar{\mu}_{1}\mmid\mu_{1})=O(1)$
given $\eu R_{q}(\bar{\mu}_{1}\mmid\mu_{1})<\veps$. Since $2q-1>q$
for $q>1$, this seemingly goes in the wrong direction --- opposite
the monotonic property of $\eu R_{q}$-divergence. This suggests an
algorithmic question: \emph{can we boost the R\'enyi-divergence order
algorithmically} (i.e., can we transform $\eu R_{q}(\mu\mmid\pi)=O(1)$
into a stronger bound $\eu R_{2q-1}(\mu'\mmid\pi)=O(1)$)? Solving
this question would enhance our theoretical toolkit, providing the
final missing piece towards a comprehensive approach for convex body
sampling and its applications.

\paragraph{Hypercontractivity of semigroups.}

Looking back at diffusion for inspiration, perhaps the most relevant
mathematical notion to tackle this question is Gross' hypercontractivity
of semigroups \cite{Gross1975logarithmic} (see Proposition~\ref{prop:Gross-hypercontractivity}):
for a semigroup $(P_{t})_{t\geq0}$ reversible with respect to an
invariant measure $\pi$, it is known that \eqref{eq:lsi} for $\pi$
is equivalent to the property that $\|P_{t}f\|_{L^{q(t)}(\pi)}\leq\|f\|_{L^{p}(\pi)}$
for any function $f\in L^{p}(\pi)$, where $q(t)$ is an increasing
function with $q(0)=p$. For instance, the continuous-time Langevin
diffusion satisfies this property, but it is not immediately clear
whether such hypercontractivity is preserved upon discretization.
Recent advancements, such as \cite{CEL24analysis}, showed that discretized
Langevin processes retain hypercontractivity under LSI when the target
$\pi$ is unconstrained and log-smooth.

Can we extend hypercontractivity algorithmically beyond such specialized
settings? In particular, sampling with hard constraints---such as
sampling from a convex body---or more general smoothness assumptions
(e.g., $B_{1}\subseteq\mathcal{K}$) have resisted straightforward
solutions. One might consider using reflected Langevin diffusions,
but discretizations like the projected Langevin algorithm \cite{BEL18sampling}
require access to a projection oracle for $\mathcal{K}$, and existing
analytical guarantees similarly collapse to $\tv$-distance (beside
incurring much higher complexity).

\paragraph{Hypercontractivity of the heat adjoint (\S\ref{sec:hypercontractivity}).}

To address this boosting question, we explore an alternative strategy
involving a different Markov semigroup that can be algorithmically
realized via $\PS$. Specifically, we aim to establish hypercontractivity
for the $L^{2}(\nu)$-adjoint $Q_{t}$ of the heat semigroup $P_{t}$,
under \eqref{eq:lsi} for the target measure $\nu$. Formally, we
state the following hypercontractivity result for $Q_{t}$.
\begin{thm}
[Hypercontractivity under heat flow]\label{thm:hypercontractivity-heat-flow}
Let $\nu$ be a probability measure satisfying \eqref{eq:lsi} with
$\clsi(\nu)<\infty$. Let $\nu_{t}=\nu*\gamma_{t}$ for $t\geq0$,
and define $q:[0,\infty)\to(1,\infty)$ by
\[
\frac{q(t)-1}{p-1}=1+\frac{t}{\clsi(\nu)}\quad\text{for any }p\in(1,\infty)\,.
\]
Then, the $L^{2}(\nu)$-adjoint $Q_{t}$ of the heat semigroup satisfies
\[
\norm{Q_{t}f}_{L^{q(t)}(\nu_{t})}\leq\norm f_{L^{p}(\nu)}\quad\text{for all }p\in(1,\infty),t\geq0,\text{ and }f\in L^{p}(\nu)\,.
\]
\end{thm}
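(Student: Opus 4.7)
The plan is to follow Gross's classical hypercontractivity argument, adapted to the non-homogeneous adjoint heat semigroup $Q_t$. Set $u_t := Q_t f = P_t(f\nu)/P_t\nu$ and let $p_t$ denote the Lebesgue density of $\nu_t = \nu * \gamma_t$. Since both $u_t p_t = P_t(f\nu)$ and $p_t = P_t\nu$ solve the heat equation $\partial_s g = \tfrac{1}{2}\Delta g$, dividing yields the transport PDE $\partial_t u_t = \tfrac{1}{2}\Delta u_t + \nabla\log p_t \cdot \nabla u_t$, which is the starting point of the computation.

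Define $F(t) := \norm{u_t}_{L^{q(t)}(\nu_t)}$ and differentiate $F(t)^{q(t)} = \int u_t^{q(t)}\,p_t\,\D x$, collecting contributions from the exponent, the integrand, and the evolving density. After integrating by parts twice so that the Laplacian arising from $\partial_t p_t = \tfrac{1}{2}\Delta p_t$ lands on $u_t^{q(t)}$, the drift and diffusion terms combine and the cross terms $\int u_t^{q-1}\nabla u_t \cdot \nabla p_t$ cancel, leaving the clean identity
\begin{equation*}
\frac{\D}{\D t}F^q \;=\; q'(t)\int u_t^q\log u_t \,\D\nu_t \;-\; \frac{2\,(q(t)-1)}{q(t)}\int \abs{\nabla u_t^{q(t)/2}}^2\,\D\nu_t,
\end{equation*}
in which only quantities with respect to $\nu_t$ survive. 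This already indicates that the relevant LSI to invoke is the one for the mollified measure $\nu_t$, not for $\nu$ itself.

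The crucial analytic input is therefore the convolution-stability bound $\clsi(\nu_t) \le \clsi(\nu)+t$, which I would prove by a standard disintegration: conditioning on $X \sim \nu$, the fibre $\gamma_t(\cdot - X)$ satisfies $\msf{LSI}(t)$, and applying $\msf{LSI}(\clsi(\nu))$ to the marginal average $h(x) := \int f^2(x+y)\,\gamma_t(\D y)$ together with the Cauchy--Schwarz bound $\abs{\nabla h}^2 \le 4h\int \abs{\nabla f}^2(\cdot+y)\,\gamma_t(\D y)$ delivers the claim. Inserting this LSI for $v := u_t^{q/2}$ into the identity above and choosing $q(t)$ so that $q'(t)\,(\clsi(\nu)+t) = q(t)-1$ makes the Fisher-information term vanish exactly; this ODE with $q(0)=p$ integrates to $q(t)-1 = (p-1)(1 + t/\clsi(\nu))$, which is the stated formula. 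What remains is algebraic: combining the residual entropy term with the product rule $\tfrac{\D}{\D t}F^q = q F^{q-1}F' + q' F^q \log F$ reduces the inequality to $F'(t) \le 0$, hence $F(t) \le F(0) = \norm{f}_{L^p(\nu)}$. The step I expect to require the most care is the convolutional LSI bound (in particular, pinning the constant to exactly $\clsi(\nu)+t$, which drives the rest of the calibration); justifying the integrations by parts for $u_t, p_t$ is standard by mollification/truncation, and the remaining algebra is essentially forced by the requirement that the Fisher-information contribution cancel.
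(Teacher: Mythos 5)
Your proposal is correct and follows essentially the same route as the paper: the identity you derive for $\frac{\D}{\D t}F^{q}$ is exactly the paper's combination of the general de Bruijn identity (which you re-derive from the heat/transport PDEs rather than citing) with $\partial_{q}\mathcal{F}=\int u_{t}^{q}\log u_{t}\,\D\nu_{t}$, the convolution bound $\clsi(\nu_{t})\le\clsi(\nu)+t$ you sketch via disintegration is the same sub-additivity fact the paper cites, and the calibration $q'(t)\,(\clsi(\nu)+t)=q(t)-1$ is identical. The one step you leave implicit that the paper spells out is the passage from nice test functions (strictly positive and compactly supported up to an additive constant, so that $\log u_{t}$ and the differentiations under the integral sign are legitimate) to arbitrary $f\in L^{p}(\nu)$, carried out there by dominated convergence, Fatou's lemma, and $\abs{Q_{t}f}\le Q_{t}(\abs f)$.
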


\subparagraph{(1) Proof sketch.}

Overall, we follow a proof sketch in \cite[Theorem 5.2.3]{BGL14analysis}
along with technical tweaks. To begin, for any compactly supported,
smooth, positive function $f$, we define the probability measure
$\D\mu\propto f\,\D\pi$. Denoting the convolved measures $\mu_{t}:=\mu*\gamma_{t}$
and $\pi_{t}:=\pi*\gamma_{t}$, we introduce the functionals $\mc F(t,q):=\int(\nicefrac{\D\mu_{t}}{\D\pi_{t}})^{q}\,\D\pi_{t}$
and $\mc G(t):=\frac{1}{q(t)}\,\log\mc F(t,q(t))$. Our goal is to
construct a suitable function $q(t)$ such that $\de_{t}\mathcal{G}\leq0$.
Our proof proceeds by differentiating $\mc G$ in $t$, which involves
computing $\de_{t}\mc F$ and $\de_{q}\mc F$. The time derivative
$\de_{t}\mc F$ follows from the general de Bruijn identity \cite{KO25strong},
while the exponent derivative $\de_{q}\mc F$ gives rise to $\ent_{\pi_{t}}$,
where the log-Sobolev constant $\clsi(\pi_{t})$ comes into picture.

Combining these ingredients, we derive a differential inequality for
$\mathcal{G}(t)$, and $\de_{t}\mc G\leq0$ holds provided that $\clsi(\pi_{t})\leq(q(t)-1)/(q(t)-1)'$.
Leveraging $\clsi(\pi_{t})\leq\clsi(\pi)+t$ \cite{chafai04entropies},
we solve $\clsi(\pi)+t\leq(q(t)-1)/(q(t)-1)'$, which holds when $q(t)-1=(1+\nicefrac{t}{\clsi(\pi)})\,(p-1)$.
Altogether, these arguments establish the hypercontractivity of the
heat adjoint $Q_{t}$. Lastly, we extend this result beyond the space
of compactly supported, positive, smooth functions by employing standard
approximation techniques in analysis, such as the dominated convergence
theorem and Fatou's lemma, thereby generalizing the statement to $L^{p}(\pi)$-space.\vspace{-9pt}

\subparagraph{(2) Consequences.}

An immediate consequence follows by selecting $f=\frac{\D\mu}{\D\nu}$
for a probability measure $\mu\ll\nu$. Since $Q_{t}f=\frac{P_{t}(f\nu)}{P_{t}\nu}$,
this yields $\norm{\D\mu_{t}/\D\nu_{t}}_{L^{q(t)}(\nu_{t})}\leq\norm{\D\mu/\D\nu}_{L^{p}(\nu)}$,
which implies $\eu R_{q(t)}(\mu_{t}\mmid\nu_{t})\leq\eu R_{p}(\mu\mmid\nu).$
This result has algorithmic implications for $\PS$. Below, the constant
$2$ can be replaced by any number larger than $1$.
\begin{cor}
[Hypercontractivity of proximal sampler]\label{cor:PS-mixing-LSI}
Let $\pi_{k}^{X}$ be the law of the $k$-th iterate returned by the
$\ps$ with step size $h>0$, initial $\pi_{0}^{X}$, and target $\pi^{X}$.
Given $1<p\leq q$, we have $\eu R_{q}(\pi_{N}^{X}\mmid\pi^{X})\leq\eu R_{p}(\pi_{0}^{X}\mmid\pi^{X})$
after $N\lesssim h^{-1}\clsi(\pi^{X})\log_{2}\frac{q-1}{p-1}$ iterations.
Moreover, given $\veps>0$ and $q\geq q_{0}>1$, we have $\eu R_{q}(\pi_{N}^{X}\mmid\pi^{X})\leq\veps$
after
\[
N\lesssim h^{-1}\clsi(\pi^{X})\log\frac{q\eu R_{2\wedge q_{0}}(\pi_{0}^{X}\mmid\pi^{X})}{(2\wedge q_{0}-1)\,\veps}\quad\text{iterations\,.}
\]
\end{cor}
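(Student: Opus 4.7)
The plan is to compose Theorem~\ref{thm:hypercontractivity-heat-flow} with the semigroup picture of $\PS$ recalled in \S\ref{subsec:intro-semigroup}. Setting $f_{k}:=\D\pi_{k}^{X}/\D\pi^{X}$, a short calculation will show that one iteration of $\PS$ acts on density ratios as $f_{k+1}=P_{h}(Q_{h}f_{k})$: the forward step produces $\D\pi_{k}^{Y}/\D\pi^{Y}=Q_{h}f_{k}$, and marginalizing $\pi^{X|Y=y}\propto\gamma_{h}(\cdot-y)$ against $\pi_{k}^{Y}$ is exactly $P_{h}$ applied to $Q_{h}f_{k}$. A Jensen-type calculation gives $\|P_{h}g\|_{L^{r}(\pi^{X})}\leq\|g\|_{L^{r}(\pi^{Y})}$ for every $r\geq1$, so combining this with Theorem~\ref{thm:hypercontractivity-heat-flow} applied to $\nu=\pi^{X}$ and $t=h$ will produce the one-step bound
\[
\|f_{k+1}\|_{L^{q(h)}(\pi^{X})}\leq\|Q_{h}f_{k}\|_{L^{q(h)}(\pi^{Y})}\leq\|f_{k}\|_{L^{p}(\pi^{X})},\qquad q(h)-1=(p-1)\alpha,
\]
where $\alpha:=1+h/\clsi(\pi^{X})$.

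Iterating this inequality along any sequence $p_{0},\ldots,p_{N}$ satisfying $p_{k+1}-1=(p_{k}-1)\alpha$ (so $p_{N}-1=(p_{0}-1)\alpha^{N}$) and translating $L^{p}$-norms to R\'enyi via $\log\|f\|_{L^{p}(\pi^{X})}=\tfrac{p-1}{p}\,\eu R_{p}(\cdot\mmid\pi^{X})$ telescopes into the master estimate
\[
\eu R_{p_{N}}(\pi_{N}^{X}\mmid\pi^{X})\leq\frac{p_{N}}{p_{0}\,\alpha^{N}}\,\eu R_{p_{0}}(\pi_{0}^{X}\mmid\pi^{X}).
\]
For the first claim I would take $p_{0}=p$ and pick $N$ so that $p_{N}\geq q$, i.e.\ $\alpha^{N}\geq(q-1)/(p-1)$. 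Since $p_{N}=1+(p-1)\alpha^{N}\leq p\,\alpha^{N}$ whenever $\alpha^{N}\geq1$, the prefactor is bounded by $1$, and monotonicity of $\eu R_{\bullet}$ in the order yields $\eu R_{q}(\pi_{N}^{X}\mmid\pi^{X})\leq\eu R_{p_{N}}\leq\eu R_{p}(\pi_{0}^{X}\mmid\pi^{X})$. Using $\log\alpha\gtrsim h/\clsi(\pi^{X})$ for small $h$ will convert the threshold $N\geq\log_{\alpha}\tfrac{q-1}{p-1}$ into the advertised count.

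For the second claim the trick is to let $p_{0}$ shrink with $N$: choosing $p_{0}=1+(q-1)/\alpha^{N}$ makes $p_{N}=q$ exactly, so the prefactor collapses to $q/(\alpha^{N}+q-1)$. Provided $\alpha^{N}\geq(q-1)/((2\wedge q_{0})-1)$, one also has $p_{0}\leq 2\wedge q_{0}$, and R\'enyi monotonicity upgrades the initial warmness to $\eu R_{p_{0}}(\pi_{0}^{X}\mmid\pi^{X})\leq\eu R_{2\wedge q_{0}}(\pi_{0}^{X}\mmid\pi^{X})$. Picking $\alpha^{N}\gtrsim q\,\eu R_{2\wedge q_{0}}(\pi_{0}^{X}\mmid\pi^{X})/\bpar{((2\wedge q_{0})-1)\,\veps}$ will absorb both constraints and deliver the claimed iteration count. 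The hard part will be precisely this balancing: a naive application of hypercontractivity at the fixed target order $q$ yields only per-step decay $1-\Theta(h/(q\clsi))$ and loses a factor of $q$ outside the logarithm; letting $p_{0}\downarrow 1$ as $N$ grows is what transfers that $q$-factor inside the logarithm via the bounded starting divergence $\eu R_{2\wedge q_{0}}$.
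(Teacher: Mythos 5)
Your proof is correct, and its core engine is the same as the paper's: iterate the one-step hypercontractivity bound of Theorem~\ref{thm:hypercontractivity-heat-flow} for the forward step together with the contraction $\norm{P_h g}_{L^r(\pi^X)}\leq\norm{g}_{L^r(\pi^Y)}$ for the backward step, along a geometric schedule of orders $p_{k+1}-1=(p_k-1)(1+h/\clsi)$. For the first claim this is exactly the paper's argument (the paper phrases it per-iteration in R\'enyi form, you phrase it in $L^p$-norm form, which is marginally sharper since it keeps the prefactor $p_N/(p_0\alpha^N)$ rather than discarding it via $\frac{q(t)}{q(t)-1}\leq\frac{p}{p-1}$). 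Where you genuinely diverge is the second claim: the paper runs a separate decay phase of $N_0$ iterations using the R\'enyi SDPI under LSI \eqref{eq:PS-LSI} to drive $\eu R_{2\wedge q_0}$ below $\veps$, and only then boosts the order by $N_1$ hypercontractive iterations; you instead obtain both the order-boost and the $\veps$-decay from a single telescoped estimate by letting the starting order $p_0=1+(q-1)\alpha^{-N}$ shrink toward $1$ with $N$, so that the prefactor $q/(\alpha^N+q-1)$ supplies the geometric decay. Your route is more self-contained (it never invokes \eqref{eq:PS-LSI}) and makes transparent why the factor of $q$ lands inside the logarithm; the paper's route is more modular, reusing the known LSI contraction as a black box. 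Both arguments share the same benign caveat in the degenerate regime $\eu R_{2\wedge q_0}(\pi_0^X\mmid\pi^X)\ll\veps$, where the stated iteration count should be read as $\log$ of a quantity implicitly assumed to exceed a constant.
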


This result translates hypercontractivity into an algorithmic outcome,
enabling efficient implementation of warm-start generation. Moreover,
it leads to $\Otilde(\kappa d\log\frac{q}{\veps})$-mixing of $\PS$
in the well-conditioned setting (Proposition~\ref{prop:mixing-PS-smooth}),
improving the linear dependence of $\PS$ on $q$ \cite{CCSW22improved}.

\paragraph{Streamlined warm-start generation (\S\ref{sec:Unif-warmstart}).}

We simplify the design and analysis of a known Gaussian-annealing
scheme for warm-start generation, incorporating the hypercontractivity
established for $\PS$. This streamlined approach offers an elegant
and clean picture for warm-start generation via annealing schemes.

As with $\psunif$, we propose and analyze its Gaussian variant in
\S\ref{subsec:PS-gauss}, denoted by $\psgauss$, specifically tailored
for truncated Gaussian distributions $\gamma_{\sigma^{2}}|_{\K}$.
Leveraging the hypercontractivity result for $\PS$ and the bound
$\clsi(\gamma_{\sigma^{2}}|_{\K})\leq\sigma^{2}$, we derive an enhanced
guarantee for $\psgauss$. Below, $B_{1}(0)$ denotes the $\ell_{2}$
unit ball centered at the origin.
\begin{thm}
\label{thm:gauss-warm} Let $\K\subset\Rd$ be a convex body with
$B_{1}(0)\subset\K$ given by a membership oracle, and let $\mu\propto\gamma_{\sigma^{2}}\cdot\ind_{\K}$
be the Gaussian truncated to $\K$. Given $\veps\in(0,1/100)$, initial
distribution $\mu_{0}$, any $q\geq2\vee\Omtilde(\log(d^{2}\sigma^{2}\log\frac{1}{\veps}))$
such that $q\geq q_{0}:=2\vee\Otilde(\log(d^{2}\sigma^{2}\log\frac{q}{\veps}\log\frac{1}{\veps}))$
with $M_{q_{0}}=\norm{\frac{\D\mu_{0}}{\D\mu}}_{L^{q_{0}}(\mu)}\leq10$,
consider the algorithm that runs $\psgauss$ initialized at $\mu_{0}$
and restarts if $\psgauss$ fails before completing $N=\Otilde(d^{2}\sigma^{2}\log\frac{q}{\veps})$
iterations. There exist choices of $h$ and $\tau$ such that this
algorithm restarts with probability at most $1/50$, and its output
$Z$ satisfies $\eu R_{q}(\law Z\mmid\mu)\leq\veps$, using $\Otilde(d^{2}\sigma^{2}\log\frac{q}{\veps}\log^{5}\frac{1}{\veps})$
\textup{membership queries in expectation}.
\end{thm}

Equipped with $\psgauss$, our warm-start generation process is very
simple: (1) sample from $\mu_{1}\propto\pi_{\K}\gamma_{d^{-1}}$ by
rejection sampling with the Gaussian proposal $\gamma_{d^{-1}}$,
and (2) while $\sigma_{i}^{2}\lesssim qR\Lambda^{1/2}\log^{1/2}d$,
move from $\bar{\mu}_{i}$ toward $\mu_{i+1}\propto\pi_{\K}\gamma_{\sigma_{i+1}^{2}}$
using $\psgauss$, where $\eu R_{\Otilde(1)}(\bar{\mu}_{i}\mmid\mu_{i})=O(1)$
and $\sigma_{i+1}^{2}=\sigma_{i}^{2}\,(1+\nicefrac{\sigma_{i}}{R})$.

This simpler algorithm eliminates pre-processing steps, such as truncation
of the convex body, and post-processing, such as adjusting the target
accuracy. Moreover, a key practical advantage of this streamlined
method is the introduction of early-stopping at $\sigma^{2}\approx qR\Lambda^{1/2}\log^{1/2}d$.
Consequently, faster sampling phases beyond $\sigma^{2}\gtrsim R\Lambda^{1/2}\log^{1/2}d$
needed in \cite{KV25faster} become redundant. On the theoretical
side, hypercontractivity enables the propagation of $\eu R_{q}$-warmness
throughout the annealing scheme, leading to the following comprehensive
result for warm-start generation:
\begin{thm}
\label{thm:complexity-Renyi-warm} Let $\pi$ be the uniform distribution
over a convex body $\K\subset\Rd$ given by a membership oracle with
$B_{1}(0)\subset\K$, $\Lambda=\norm{\cov\pi}$, and $R^{2}=\E_{\pi}[\abs{\cdot}^{2}]$,
. For any $\veps_{\unif}\in(0,1/100)$ and $q\geq2\vee\Omtilde\bpar{\log\frac{dR}{\veps_{\unif}}}$,
there exists an algorithm that outputs a sample $X^{*}$ such that
$\eu R_{q}(\law X^{*}\mmid\pi)\leq2$, using $\Otilde(qd^{2}R^{3/2}\Lambda^{1/4})$
membership queries in expectation.
\end{thm}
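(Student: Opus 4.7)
The strategy is to instantiate the Gaussian annealing scheme outlined after Theorem~\ref{thm:gauss-warm} and to bound its total cost by combining (i) the per-phase complexity of $\psgauss$ (Theorem~\ref{thm:gauss-warm}), (ii) the hypercontractive $\eu R_q$-mixing of $\PS$ (Corollary~\ref{cor:PS-mixing-LSI}), and (iii) a geometric bound on $\eu R_c(\mu_i \mmid \mu_{i+1})$ for consecutive annealing measures. I would fix a R\'enyi order $c > q$ (say $c = 2q+1$, to accommodate a weak triangle inequality), set $\sigma_0^2 = 1/d$, and evolve $\sigma_{i+1}^2 = \sigma_i^2(1+\sigma_i/R)$, stopping once $\sigma_m^2$ exceeds a threshold $\asymp qR\Lambda^{1/2}\log^{1/2}d$ at which $\eu R_q(\mu_m \mmid \pi) = O(1)$ by a standard truncated-Gaussian vs.\ uniform computation using $R^2 = \E_\pi\abs{\cdot}^2$ and $\Lambda = \norm{\cov\pi}$. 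The number of phases satisfies $m \asymp R\sqrt{d}$, which follows from noting that $\sigma_i^{-1}$ decreases by roughly $1/(2R)$ each step, so it drops from $\sqrt{d}$ to $\Theta((qR\Lambda^{1/2})^{-1/2})$ in $\asymp R\sqrt{d}$ steps.

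Next, I would verify that phase $0$ produces $\bar\mu_1$ with $\eu R_c(\bar\mu_1 \mmid \mu_1) = O(1)$: since $B_1 \subset \K$ and $\sigma_0^2 = 1/d$, rejection sampling from $\gamma_{d^{-1}}$ with acceptance on $\K$ has constant acceptance probability and concentrates on $B_1$, making the ratio $\D\mu_1/\D\gamma_{d^{-1}}$ bounded. For phases $i \geq 1$, the key step is to bound $\eu R_c(\mu_i \mmid \mu_{i+1})$: using the slow schedule $\sigma_{i+1}^2 = \sigma_i^2(1+\sigma_i/R)$, a direct calculation of the likelihood ratio $\D\mu_i/\D\mu_{i+1} \propto \exp\bpar{-\frac{1}{2}(\sigma_i^{-2} - \sigma_{i+1}^{-2})\abs{x}^2}$ combined with $\E_{\mu_i}\abs{\cdot}^2 \lesssim \sigma_i^2 d + R^2$ and the R\'enyi-LSI/PI machinery of \S\ref{subsec:FI-to-closeness} gives $\eu R_c(\mu_i \mmid \mu_{i+1}) = O(1)$. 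Then, invoking the weak triangle inequality \eqref{eq:weak-triangle-Renyi} at a chosen intermediate order yields $\eu R_3(\bar\mu_i \mmid \mu_{i+1}) = O(1)$, provided $\eu R_c(\bar\mu_i \mmid \mu_i) = O(1)$ is maintained by the previous phase.

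To propagate the $\eu R_c$-warmness across phases without $\tv$-collapse, I would apply Corollary~\ref{cor:PS-mixing-LSI} in each phase $i$ to boost $\eu R_3(\bar\mu_i \mmid \mu_{i+1}) = O(1)$ into $\eu R_c(\bar\mu_{i+1} \mmid \mu_{i+1}) \leq \veps_i$ for some polynomially small $\veps_i$, at the cost of an extra $\log$ factor in the iteration count. Using $\clsi(\mu_{i+1}) \leq \sigma_{i+1}^2$ (log-concavity plus Gaussian factor) and Theorem~\ref{thm:gauss-warm}, each such phase costs $\Otilde(d^2\sigma_i^2)$ membership queries. Summing via the bound $\sigma_i = 1/(\sqrt d - i/(2R))$ gives
\[
\sum_{i=0}^{m-1} \sigma_i^2 \;\asymp\; 2R\Par{\frac{1}{\sigma_0^{-1}-(m/(2R))} - \frac{1}{\sqrt d}} \;\asymp\; 2R\,\sigma_m \;\asymp\; q^{1/2}R^{3/2}\Lambda^{1/4},
\]
which, after absorbing logarithmic and $q$-dependent factors from the warmness-boosting step, yields a total cost of $\Otilde(qd^2 R^{3/2}\Lambda^{1/4})$ membership queries.

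The main obstacle I expect is controlling $\eu R_c(\mu_i \mmid \mu_{i+1})$ tightly at the chosen order $c > q$: monotonicity of $\eu R_c$ in $c$ prevents a free reduction from $\eu R_2$ bounds, so I would need to combine moment bounds on $\mu_i$ (which require the KLS-improved Poincar\'e constant to avoid an extra $\log d$) with the R\'enyi regularization under simultaneous heat flow implicit in Theorem~\ref{thm:hypercontractivity-heat-flow}. A secondary subtlety is ensuring that the boosted R\'enyi order does not inflate geometrically across $m \asymp R\sqrt{d}$ phases; this is precisely what Corollary~\ref{cor:PS-mixing-LSI} resolves, since it allows one to re-enter each phase at a fixed R\'enyi order $c$ rather than paying the $\eu R_{\Theta(2^m q)}$ cost of naive backtracking.
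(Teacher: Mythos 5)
Your overall architecture matches the paper's: rejection-sampling initialization from $\gamma_{d^{-1}}$, the schedule $\sigma_{i+1}^{2}=\sigma_{i}^{2}(1+\sigma_{i}/R)$, early stopping at $\sigma^{2}\asymp qR\Lambda^{1/2}\log^{1/2}d$ via the R\'enyi--PI argument, and the summation $\sum_{i}d^{2}\sigma_{i}^{2}\asymp d^{2}R\sigma_{\text{last}}$. However, there is a genuine gap in your relay mechanism, and it is exactly the point you flag as your "main obstacle" without resolving it. You propose to maintain warmness at order $c=2q+1$ across all $m\asymp R\sqrt{d}$ phases, which forces you to bound $\eu R_{c}(\mu_{i}\mmid\mu_{i+1})$ at order $\Theta(q)$. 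But the closeness bound coming from \eqref{eq:Renyi-LSI} (Lemma~\ref{lem:closeness-annealing}) gives $\eu R_{c}(\mu_{i}\mmid\mu_{i+1})\leq c^{2}\alpha^{2}R^{2}/(2\sigma_{i}^{2})=c^{2}/2=\Theta(q^{2})$ under your schedule $\alpha_{i}=\sigma_{i}/R$ --- not $O(1)$. Making it $O(1)$ at order $c$ requires $\alpha_{i}\lesssim\sigma_{i}/(qR)$, which multiplies the phase count by $q$ and degrades the total to $\Otilde(q^{3/2}d^{2}R^{3/2}\Lambda^{1/4})$. (A secondary issue: your moment bound $\E_{\mu_{i}}\abs{\cdot}^{2}\lesssim\sigma_{i}^{2}d+R^{2}$ is too weak when $\sigma_{i}^{2}$ is large; the correct bound is $\E_{\pi\gamma_{\sigma^{2}}}\abs{\cdot}^{2}\leq R^{2}$, which follows from the anti-correlation of $\abs{\cdot}^{2}$ and the Gaussian weight, and the closeness lemma needs this sharper version.)

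The paper's resolution is to never require high-order closeness of consecutive annealing measures. It maintains only $\eu R_{3}(\bar{\mu}_{i}\mmid\mu_{i})=O(1)$ throughout, relays via the weak triangle inequality at the fixed orders $3$ and $4$ (so only $\eu R_{4}(\mu_{i}\mmid\mu_{i+1})\leq 8$ is ever needed), and defers the order boost to a \emph{single} final application of hypercontractivity (Corollary~\ref{cor:PS-mixing-LSI}) against the fixed target $\mu_{\text{last}}$: iterating $\psgauss$ an extra $O(d^{2}\sigma_{\text{last}}^{2}\log_{2}q)=\Otilde(qd^{2}R\Lambda^{1/2})$ times yields $\eu R_{2q}(\law X'\mmid\mu_{\text{last}})=O(1)$, and this extra cost is absorbed since $R^{2}\geq\Lambda$. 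The final weak triangle inequality then combines $\eu R_{2q-1}(\law X'\mmid\mu_{\text{last}})$ with the early-stopping bound $\eu R_{2q}(\mu_{\text{last}}\mmid\pi)=O(1)$. The lesson is that hypercontractivity boosts the order of the \emph{iterate's} warmness against a fixed target, so no closeness of consecutive $\mu_{i}$'s at order $\Theta(q)$ is ever required; your per-phase boosting to order $c$ is both unnecessary and the source of the quantitative failure.
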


Combining this with Theorem~\ref{thm:unif-warm}, we obtain the query
complexity of convex-body sampling from scratch, with $\eu R_{q}$-divergence
output guarantees for any $q\geq1$.
\begin{cor}
\label{cor:unif-comp-scratch} Let $\pi$ be the uniform distribution
over a convex body $\K\subset\Rd$ given by a membership oracle with
$B_{1}(0)\subset\K$, $\Lambda=\norm{\cov\pi}$, and $R^{2}=\E_{\pi}[\abs{\cdot}^{2}]$.
For any $\veps\in(0,1/100)$ and $q\geq1$, there exists an algorithm
that outputs a sample $X^{*}$ such that $\eu R_{q}(\law X^{*}\mmid\pi)\leq\veps$,
using $\Otilde(qd^{2}R^{3/2}\Lambda^{1/4}\log\frac{1}{\veps}+qd^{2}\Lambda\log^{7}\frac{1}{\veps})$
membership queries in expectation.
\end{cor}

\paragraph{Streamlined analysis of Gaussian annealing.}

In our analysis of Gaussian annealing (\S\ref{subsec:FI-to-closeness}),
we present a principled and streamlined approach to bounding the closeness
between successive annealing distributions. Whereas prior works relied
on localization arguments (with long calculations) to establish such
closeness, we show that this property follows more cleanly from LSI,
combined with a mollification argument that allows for a limiting
process.

To formalize this idea, we recall a R\'enyi version of LSI established
in \cite{VW23rapid}, which states that $\eu R_{q}(\mu\mmid\pi)\leq\nicefrac{q\clsi(\pi)}{2}\,\msf{RFI}_{q}(\mu\mmid\pi)$
for any $q>1$ and smooth $\mu/\pi$, where $\msf{RFI}_{q}(\mu\mmid\pi)=q\,\E_{\pi}[(\frac{\D\mu}{\D\pi})^{q}\abs{\nabla\log\frac{\D\mu}{\D\pi}}^{2}]/\E_{\pi}[(\frac{\D\mu}{\D\pi})^{q}]$.
When bounding $\eu R_{q}(\gamma_{\sigma^{2}}|_{\K}\mmid\gamma_{\sigma^{2}(1+\alpha)}|_{\K})$
for a convex body $\K$ and some $\alpha>0$, a na\"ive calculation---ignoring
the truncation to $\K$---suggests that one can take $\alpha\asymp\nicefrac{\sigma}{qR}$
to make it $\O(1)$. This heuristic choice recovers the scaling behavior
established in earlier results.

To rigorously account for the effects of truncation to $\K$, we employ
a mollification argument using a standard mollifier $\eta$ supported
on the unit ball (see \eqref{eq:mollifier}). For $\varepsilon>0$,
we define $\eta_{\varepsilon}(x):=\varepsilon^{-d}\,\eta(x/\varepsilon)$,
and convolve both truncated Gaussians with $\eta_{\varepsilon}$.
Since convolution with $\eta_{\varepsilon}$ yields a smooth approximation
$f_{\varepsilon}:=f*\eta_{\varepsilon}$ that converges to $f$ almost
everywhere as $\varepsilon\to0$ for any locally integrable function
$f$, this mollification step allows us to apply the R\'enyi-LSI
to the regularized densities and pass to the limit, thereby recovering
the desired closeness bound.

An additional benefit of this approach is that it enables a clean
justification for early-stopping in the annealing schedule. Unlike
known annealing algorithms, we can terminate annealing when the variance
$\sigma^{2}$ reaches approximately $qR\Lambda^{1/2}$, at which point
$\eu R_{q}(\gamma_{\sigma^{2}}|_{\K}\mmid\pi_{\K})=\O(1)$. This follows
directly from a R\'enyi version of PI (see \eqref{eq:Renyi-PI}),
combined with the mollification argument described above. Overall,
this approach simplifies the analysis and unifies the reasoning behind
closeness and early-stopping in Gaussian annealing.

\subsubsection{Logconcave sampling under an evaluation oracle}

Our uniform-sampling results extend naturally to general logconcave
distributions $\pi\propto e^{-V}$, given access to an \emph{evaluation
oracle} for a convex function $V:\Rd\to\R\cup\{\infty\}$. In this
setting, the regularity condition $B_{1}\subset\K$ translates to
a condition on a level set of $V$, namely, that the \emph{ground
set} \emph{$\msf L_{\pi,g}:=\{x\in\Rd:V(x)-\min V\leq10d\}$} contains
a unit ball. This generalization is justified by the fact that $\msf L_{\pi,g}$
takes up a constant fraction of the $\pi$-measure (see \cite[Lemma 2.6]{KV25sampling}).
We emphasize that any logconcave distribution satisfies this condition
after rescaling.

The logconcave sampling problem has been studied as an algorithmic
problem since Applegate and Kannan~\cite{AK91sampling}, who used
the $\msf{Grid}\text{ }\msf{walk}:\eu R_{\infty}\to\tv$, establishing
polynomial complexity in the dimension under the assumption of Lipschitzness
of the function over the support. Later, Lov\'asz and Vempala \cite{LV07geometry}
generalized classical algorithms such as the $\bw$ and $\har$ to
this setting by incorporating a Metropolis filter. They showed that
after a suitable pre-processing of $\pi$, $\bw,\har:\eu R_{\infty}\to\tv$
with query complexity $\Otilde(\varepsilon^{-4}d^{2}R^{2}M_{\infty}^{4}\log\nicefrac{M_{\infty}}{\veps})$.
Furthermore, \cite{LV06fast} showed that the complexity of $\har:\eu R_{2}\to\eu R_{2}$
can be improved to $\Otilde(d^{2}R^{2}\polylog\nicefrac{M_{2}}{\veps})$.

Recent work \cite{KV25sampling,KV25faster} revisited this problem
by adapting $\PS$ to the general logconcave setting. Although $\PS$
can in principle be extended to any distribution where $\pi^{X|Y=y}\propto\pi^{X}(\cdot)\,\gamma_{h}(\cdot-y)$
can be sampled (e.g., when given access to a proximal oracle in the
log-smooth setting), it is unclear how to implement this step for
arbitrary $\pi^{X}$. To address this, \cite{KV25sampling} introduced
the exponential lifting technique: for $\pi^{X}\propto e^{-V}$, consider
the lifted distribution 
\begin{equation}
\D\pi^{X,T}(x,t)\propto e^{-dt}\,\ind_{\K}(x,t)\,\D x\D t\,,\quad\text{where }\K:=\{(x,t)\in\Rd\times\R:V(x)\leq dt\}\,.\tag{\ensuremath{\msf{exp\text{-}lifting}}}\label{eq:exp-lifting}
\end{equation}
Here, the $X$-marginal of $\pi^{X,T}$ is exactly $\pi^{X}$, and
$\K$ is convex whenever $\pi^{X}$ is logconcave. Thus, logconcave
sampling reduces to the simpler problem of sampling from a logconcave
distribution with a linear potential over a convex set. We then apply
$\PS$ to this exponential distribution, denoted by $\psexp$: for
$z=(x,t)\in\R^{d+1}$ and $y\in\R^{d+1}$, it alternates $y\sim\pi^{Y|Z=z}=\mc N(z,hI_{d+1})$
and $z\sim\pi^{Z|Y=y}\propto\mc N(y-h\alpha,hI_{d+1})|_{\K}$ for
$\alpha=de_{d+1}$, where the second step is implemented via rejection
sampling, using the proposal $\mc N(y-h\alpha,hI_{d+1})$ with threshold
$\tau$. 

In \cite{KV25sampling}, it was shown that $\psexp:\eu R_{\infty}\to\eu R_{q}$
with query complexity $\Otilde(qM_{\infty}d^{2}(\Lambda\vee1)\polylog\nicefrac{1}{\eta\veps})$
and $\psexp:\eu R_{\infty}\to\eu R_{\infty}$ with complexity $\Otilde(M_{\infty}d^{2}(d\vee R^{2})\polylog\nicefrac{1}{\eta\veps})$.
As in the uniform sampling case, the order of initial warmness was
relaxed to $\eu R_{c}$ with $c=\Otilde(1)$ in \cite{KV25faster},
where $\psexp:\eu R_{c}\to\eu R_{2}$ achieves query complexity $\Otilde(M_{c}d^{2}(\Lambda\vee1)\polylog\nicefrac{1}{\eta\veps})$.
However, it remained open to balance the divergence order of initial
warmness and final output distribution guarantee.

\paragraph{Extension to logconcave sampling (\S\ref{sec:complexity-LC}).}

Our $\psexp:\eu R_{q}\to\eu R_{q}$ with restart aligns the divergence
order of initial warmness with that of the final error guarantee,
from an $O(1)$-warm start, at an additive cost of $qd^{2}$ queries.
This refines a long line of work on sampling in the classical setting.
See \S\ref{subsec:LC-sampling-from-warmstart} for further details.
\begin{thm}
[Zeroth-order logconcave sampling from a warm start]\label{thm:lc-warm}
For a convex function $V:\Rd\to\R$, given by an evaluation oracle,
let $\pi\propto e^{-V}$ be the logconcave distribution over $\Rd$
with $B_{1}\subset\msf L_{\pi,g}$ and $\Lambda=\norm{\cov\pi}$.
Given $\veps>0$, an initial distribution $\pi_{0}$, and $q\geq2\vee\Omtilde(\log\{d^{2}\,(\Lambda\vee1)\log\frac{1}{\veps}\})$
such that $M_{q}=\norm{\frac{\D\pi_{0}}{\D\pi}}_{L^{q}(\pi)}\leq10$,
consider the algorithm that runs $\psexp$ initialized at $\pi_{0}$
and restarts if $\psexp$ fails before completing $N=\Otilde(qd^{2}\,(\Lambda\vee1)\log^{2}\frac{1}{\veps})$
iterations. There exists choices of $h$ and $\tau$ such that this
algorithm restarts with probability at most $1/50$, and its output
$Z$ satisfies $\eu R_{q}(\law Z\mmid\pi)\leq\veps$, using $\Otilde(qd^{2}\,(\Lambda\vee1)\log^{3}\frac{1}{\veps})$
\textup{evaluation queries in expectation}.
\end{thm}

We remark on a connection to the well-conditioned setting. In this
case, the ground set contains a ball of radius $\Theta((d/\beta)^{1/2})$
centered at $\arg\min V$. Since $\Lambda\leq\alpha^{-1}$ by the
Brascamp--Lieb or Lichnerowicz inequality, $\psexp$ uses $\Otilde(q\,(\kappa d+d^{2})\log\nicefrac{1}{\veps})$
queries from an $\O(1)$-warm start in $\eu R_{q}$. With the additional
$qd^{2}$ queries, this matches the known $\Otilde(q\kappa d\log\nicefrac{1}{\veps})$-complexity
results for zeroth-order samplers in this setting which additionally
use a proximal oracle. See \S\ref{subsec:Related-work} for further
details.

Annealing approaches for uniform sampling have also been extended
to arbitrary logconcave distributions~\cite{LV06fast,KV25sampling,KV25faster}.
In the general case, the best-known complexity for warm-start generation
is comparable to that of the convex-body case for $\tv$ and $\eu R_{\infty}$-warmness,
with algorithms requiring additional $d^{2}\polylog\nicefrac{1}{\varepsilon}$
queries. In \S\ref{subsec:LC-warmness-generation}, we extend our
refined warm-start generation to general logconcave distributions.
\begin{thm}
\label{thm:complexity-Renyi-warm-LC} For a convex function $V:\Rd\to\R$,
given by an evaluation oracle, let $\pi\propto e^{-V}$ be the logconcave
distribution over $\Rd$ with $B_{1}(0)\subset\msf L_{\pi,g}$, $\Lambda=\norm{\cov\pi}$,
and $R^{2}=\E_{\pi}[\abs{\cdot}^{2}]$. For any $\veps_{\exp}\in(0,\frac{1}{10})$
and $q\geq2\vee\Omtilde(\log\frac{dD}{\veps_{\exp}})$, there exists
an annealing algorithm that outputs a sample $X^{*}$ such that $\eu R_{q}(\law X^{*}\mmid\pi)\leq2$,
using $\Otilde(d^{2.5}+qd^{2}\,(R^{3/2}\vee1)(\Lambda^{1/4}\vee1))$
evaluation queries in expectation.
\end{thm}

Combining the two theorems above, we establish the query complexity
of sampling from an arbitrary logconcave distribution in $\Rd$ with
$\eu R_{q}$-divergence guarantees, assuming only an evaluation oracle.
\begin{cor}
\label{cor:general-LC-comp} For a convex function $V:\Rd\to\R$,
given by an evaluation oracle, let $\pi\propto e^{-V}$ be the logconcave
distribution over $\Rd$ with $B_{1}(0)\subset\msf L_{\pi,g}$, $\Lambda=\norm{\cov\pi}$,
and $R^{2}=\E_{\pi}[\abs{\cdot}^{2}]$. For any $\veps\in(0,\frac{1}{10})$
and $q\geq1$, there exists an algorithm that outputs a sample $X^{*}$
such that $\eu R_{q}(\law X^{*}\mmid\pi)\leq\veps$, using $\Otilde(d^{2.5}+qd^{2}R^{3/2}\,(\Lambda^{1/4}\vee1)\log\frac{1}{\veps}+qd^{2}\,(\Lambda\vee1)\log^{4}\frac{1}{\veps})$
evaluation queries in expectation.
\end{cor}

\subsubsection{Obstruction to further acceleration}

A natural question is whether the annealing complexity in Corollary~\ref{cor:general-LC-comp}
can be improved beyond $\Otilde(qd^{2}R^{3/2}\Lambda^{1/4})$. For
isotropic logconcave distributions in particular, given the $d^{3}$
query complexity of \cite{KV25sampling} for $\eu R_{\infty}$-guarantees
and our $qd^{2.75}$ complexity for $\eu R_{q}$-guarantees, it is
natural to ask whether the $d^{2.5}$ \emph{iteration} complexity
of the $\sw$ established in \cite{LV24eldan} can be matched as a
query complexity, at least for $\tv$-guarantees (note that the speedy
walk is an abstraction that, in general, does not have an efficient
implementation).

Towards this goal, \cite{KV25faster} conjectured that for the uniform
distribution over any isotropic convex body $\pi$ in $\Rd$, the
Gaussian-tilted distribution $\pi\gamma_{t}$ satisfies $\norm{\cov\pi\gamma_{t}}=\Otilde(1)$
for all $t>0$, a property called $\Otilde(1)$-\emph{quadratic-tilt
stability}. Under this conjecture, one can afford more aggressive
annealing steps of size $\sigma_{i+1}^{2}=\sigma_{i}^{2}(1+\sigma_{i}^{2}/R)$
(instead of $\sigma_{i+1}^{2}=\sigma_{i}^{2}(1+\sigma_{i}/R)$), leading
to an improved warm-start generation complexity of $\Otilde(d^{2.5})$
for isotropic logconcave distributions (see \S\ref{subsec:Acceleration-conj}).
We verify in \S\ref{sec:Quadratic-tilt-stability} that structured
bodies---including the hypercube, the simplex, and any isotropic
convex body of revolution---satisfy $O(1)$-quadratic-tilt stability,
confirming the desired $\Otilde(d^{2.5})$ query complexity of sampling
from these structured distributions.

However, in \S\ref{subsec:counterexample} we disprove the conjecture
in general, establishing a fundamental obstruction to this line of
acceleration. Specifically, adapting a construction of Bizeul \cite{Bizeul26logsobolev},
we show that there exists an isotropic convex body whose Gaussian-tilted
distribution exhibits a peak variance of $d^{1/3}$ (Proposition~\ref{prop:counter-example}),
disproving the conjecture and proving a lower bound for the Gaussian
annealing approach that has been central to this line of work since
\cite{CV14cubic,CV15bypass,CV18Gaussian}.

\subsection{Preliminaries\label{subsec:Preliminaries}}

\paragraph{Notation.}

For $t>0$, we reserve $\gamma_{t}$ for the centered Gaussian distribution
with covariance matrix $tI_{d}$. The indicator function of a set
$S\subseteq\Rd$ is denoted by $\ind_{S}(x):=[x\in S]$, and $\mu|_{S}$
refers to a distribution $\mu$ truncated to $S$ (i.e., $\mu|_{S}\propto\mu\cdot\ind_{S}$).
For two probability measures $\mu,\pi$, we use $\mu\pi$ to denote
the new distribution with density proportional to $\mu\pi$.

Both $a\lesssim b$ and $a=\O(b)$ mean $a\le cb$ for a universal
constant $c>0$. $a=\Omega(b)$ means $a\gtrsim b$, and $a\asymp b$
means $a=\O(b)$ and $a=\Omega(b)$. Lastly, $a=\Otilde(b)$ means
$a=O(b\polylog b)$. For a positive semi-definite matrix $\Sigma$,
$\norm{\Sigma}$ denote the operator norm of $\Sigma$.

\paragraph{Log-concavity.}

We call a function $f:\Rd\to[0,\infty)$ \emph{logconcave} if $-\log f$
is convex in $\Rd$, and a probability measure $\pi$ (or distribution)
logconcave if it has a logconcave density function with respect to
the Lebesgue measure. We abuse notation by using the same symbol for
a distribution and density. We assume that any logconcave distributions
considered in this work are non-degenerate; otherwise, we could simply
work on an affine subspace on which degenerate distributions are supported.
For $t\geq0$, $\pi$ is called \emph{$t$-strongly logconcave} if
$-\log\pi$ is $t$-strongly convex (i.e., $-\log\pi-\frac{t}{2}\,\abs{\cdot}^{2}$
is convex). Clearly, log-concavity is preserved under multiplication,
and a classical result by Pr\'ekopa and Leindler ensures that the
convolution also preserves log-concavity. A distribution is called
\emph{isotropic} if its barycenter is at the origin and has the identity
covariance matrix. Note that as the logconcave distributions decay
exponentially fast at infinity, they have finite moments of all orders.

\paragraph{Probability divergences and distances.}

Let $\mu$ and $\nu$ denote probability measures over $\Rd$, and
$\vphi:\R_{+}\to\R$ be a convex function with $\vphi(1)=0$. The
\emph{$\vphi$-divergence} of $\mu$ toward $\nu$ with $\mu\ll\nu$
is defined as
\[
D_{\vphi}(\mu\mmid\nu):=\int\vphi\bpar{\frac{\D\mu}{\D\nu}}\,\D\nu\,.
\]
The \emph{total variation} ($\tv$) distance, the \emph{$\KL$-divergence,
and $\chi^{q}$-divergence} for $q\in(1,\infty)$ can be recovered
through $\vphi(x)=\half\,\abs{x-1}$, $x\log x$, and $x^{q}-1$,
respectively. The \emph{$q$-R\'enyi divergence} is defined as
\begin{equation}
\eu R_{q}(\mu\mmid\nu):=\frac{1}{q-1}\,\log\bpar{1+\chi^{q}(\mu\mmid\nu)}=\frac{1}{q-1}\,\log\,\Bnorm{\frac{\D\mu}{\D\nu}}_{L^{q}(\nu)}^{q}\,.\label{eq:div-equivalence}
\end{equation}
The \emph{R\'enyi-infinity divergence} is defined as $\eu R_{\infty}(\mu\mmid\nu):=\log\esssup_{\nu}\frac{\D\mu}{\D\nu}$.
A weak triangle inequality holds for R\'enyi divergence: for any
$q>1,\lda\in(0,1)$, and probability measures $\mu,\nu,\pi$, it holds
that
\begin{equation}
\eu R_{q}(\mu\mmid\pi)\leq\frac{q-\lda}{q-1}\,\eu R_{\frac{q}{\lda}}(\mu\mmid\nu)+\eu R_{\frac{q-\lda}{1-\lda}}(\nu\mmid\pi)\,.\label{eq:weak-triangle-Renyi}
\end{equation}
For $\lda=q/(2q-1)$, we have $\eu R_{q}(\mu\mmid\pi)\leq\frac{2q}{2q-1}\,\eu R_{2q-1}(\mu\mmid\nu)+\eu R_{2q}(\nu\mmid\pi)$.
We refer readers to \cite{vH14renyi,Mironov17renyi} for more properties
of the R\'enyi divergence.

\section{Uniform sampling from convex bodies with balanced R\'enyi divergence\label{sec:ps-unif}}

We begin by describing our approach for the general proximal sampler
($\PS$) and then focus specifically on $\psunif$ for uniform sampling
in \S\ref{subsec:uniform-query-complexity}, which was proposed and
analyzed in \cite{KVZ24INO}. $\PS$ with step size $h$ alternates
between two steps: starting with $k=0$ and $x_{0}\sim\pi_{0}^{X}$,
(i) sample $y_{k+1}\sim\pi^{Y|X=x_{k}}$ and (ii) sample $x_{k+1}\sim\pi^{X|Y=y_{k+1}}$.
Precisely, let $P_{h}$ and $Q_{h}$ be the transition kernels of
the forward and backward steps, respectively, defined as 
\[
P_{h}(x,\D y)=\pi^{Y|X=x}=\mc N(x,hI_{d})\,,\qquad Q_{h}(y,\D x)=\pi^{X|Y=y}\propto\pi^{X}(\cdot)\,\gamma_{h}(\cdot-y)\,.
\]
Assume that there exists a suitable proposal distribution $\mu_{y}$
such that $(\mu_{y}\mid\text{accepted})=Q_{h}(y,\cdot)$. For uniform
sampling, we take $\mu_{y}=\mc N(y,hI_{d})$ and accept the proposal
if it lies inside $\K$. Given this setup, we implement the backward
step via rejection sampling with proposal $\mu_{y_{k+1}}$. If the
rejection loop exceeds $\tau$ trials, we declare failure and restart
from scratch with a new sample from $\pi_{0}^{X}$.

Kook and Vempala \cite{KV25faster} obtained the query complexity
of $\PS$ for obtaining a uniform sample whose law is $\veps$-close
to the target $\pi^{X}$ in $\eu R_{2}$-divergence, assuming the
warmness is given in $\eu R_{q}$ with $q\geq\Otilde(1)$. We refine
their analysis to obtain the query complexity of $\psunif:\eu R_{q}\to\eu R_{q}$
given $\norm{\D\pi_{0}^{X}/\D\pi^{X}}_{L^{q}(\pi^{X})}\leq10$ (i.e.,
an $\O(1)$-warm start in $\eu R_{q}$). To this end, we decouple
the analysis into two components: (1) a mixing analysis and (2) the
query complexity of the backward step. The first part determines how
many iterations $N$ are required for the sample $X_{N}$ to be $\veps$-close
to $\pi^{X}$. The second determines how many queries $\PS$ uses
in each iteration. Therefore, the total query complexity of the algorithm
is the product of these two components.

\subsection{Mixing analysis\label{subsec:mixing}}

The data-processing inequality (DPI) is a fundamental tool in information
theory. For any probability measures $\mu,\nu$, Markov kernel $P$,
$\vphi$-divergence $D_{\vphi}$, and $q\in(1,\infty]$, it holds
that
\begin{equation}
D_{\vphi}(\mu P\mmid\nu P)\leq D_{\vphi}(\mu\mmid\nu)\,,\qquad\quad\text{and}\qquad\quad\eu R_{q}(\mu P\mmid\nu P)\leq\eu R_{q}(\mu\mmid\nu)\,,\label{eq:DPI}
\end{equation}
where $\mu P:=\int P(x,\cdot)\,\mu(\D x)$ denotes the distribution
obtained by applying one step of $P$ starting from $\mu$. This implies
that for $D\in\{D_{\vphi},\eu R_{q}\}$ and any $k\geq0$,
\[
D(\pi_{k+1}^{X}\mmid\pi^{X})=D(\pi_{k+1}^{Y}Q_{h}\mmid\pi^{Y}Q_{h})\leq D(\pi_{k+1}^{Y}\mmid\pi^{Y})=D(\pi_{k}^{X}P_{h}\mmid\pi^{X}P_{h})\leq D(\pi_{k}^{X}\mmid\pi^{X})\,.
\]
However, this alone is not sufficient to establish a mixing rate.
It turns out that adding an independent Gaussian random variable to
$X_{k}$ (i.e., convolving $\law X_{k}$ with $\gamma_{h}$) causes
$\law X_{k}$ to contract directly toward $\pi^{X}*\gamma_{h}=\pi^{Y}$,
with the contraction rate governed by $\cpi(\pi^{X})$ or $\clsi(\pi^{X})$.

This phenomenon, known as the \emph{strong data-processing inequality}
(SDPI) for the Gaussian channel \cite{AG76spreading}, has been extensively
studied in information theory \cite{PW16dissipation,CPW18strong}.
Given a $\vphi$-divergence $D$, the contraction coefficient is defined
as 
\[
\eta_{D}(\mu,t):=\sup_{\nu:\,D(\nu\mmid\mu)\in\R_{>0}}\frac{D(\nu_{t}\mmid\mu_{t})}{D(\nu\mmid\mu)}\,,
\]
where $\nu_{t}:=\nu*\gamma_{t}$ denotes the probability measure obtained
by convolving $\nu$ with $\gamma_{t}$. Note that $0\leq\eta_{D}(\mu,t)\leq1$
by the DPI.

Bounding $\eta_{D}$ relies on the well-known \emph{de Bruijn identity},
$\de_{t}\KL(\nu_{t}\mmid\mu_{t})=-\half\,\FI(\nu_{t}\mmid\mu_{t})$,
where the relative Fisher information is defined as $\FI(\nu\mmid\mu):=\E_{\nu}[\abs{\nabla\log\frac{\D\nu}{\D\mu}}^{2}]$;
see rigorous proofs by Barron \cite{Barron84monotonic} and Klartag
and Ordentlich \cite{KO25strong} (for its generalizations). Other
derivations can be found in \cite{guo09relative,CCSW22improved},
under the assumption that the probability measures involved are sufficiently
smooth to justify differentiation under the integral sign and the
vanishing of boundary terms in integration by parts.

Combining this identity with, for example, \eqref{eq:lsi} for $\mu$
(i.e., $\KL(\nu_{t}\mmid\mu_{t})\leq\nicefrac{\clsi(\mu_{t})}{2}\,\FI(\nu_{t}\mmid\mu_{t})$)
and $\clsi(\mu_{t})\leq\clsi(\mu)+t$ \cite{chafai04entropies}, one
obtains a differential inequality in $\KL$; solving it yields 
\[
\eta_{\KL}(\mu,t)\leq\bpar{1+\frac{t}{\clsi(\mu)}}^{-1}\,.
\]
Under similar regularity assumptions, the SDPI for R\'enyi divergence
under \eqref{eq:pi}, \eqref{eq:lsi}, and Lata\l a--Oleszkiewicz
inequality was studied in \cite{CCSW22improved} and further extended
to constrained probability measures in \cite{KVZ24INO}. These results
were rigorously unified in \cite{KO25strong} under a $\vphi$-Sobolev
inequality \cite{chafai04entropies} (a generalization of \eqref{eq:pi}
and \eqref{eq:lsi}), with regularity assumptions explicitly identified.
We refer the reader to \cite[Regularity conditions]{KO25strong} for
a detailed account.

Specifically, \cite{CCSW22improved} showed that under the regularity
assumptions, for any $q\geq2$ and $\nu\ll\mu$, the following bounds
hold:
\begin{equation}
\eu R_{q}(\nu_{t}\mmid\mu_{t})\leq\begin{cases}
\eu R_{q}(\nu\mmid\mu)-\frac{\log(1+t/\cpi(\mu))}{q} & \text{if }\eu R_{q}(\nu\mmid\mu)\geq1\,,\\
\frac{\eu R_{q}(\nu\mmid\mu)}{(1+t/\cpi(\mu))^{1/q}} & \text{if }\eu R_{q}(\nu\mmid\mu)<1\,,
\end{cases}\label{eq:Reniy-contraction-PI}
\end{equation}
and $\chi^{2}(\nu_{t}\mmid\mu_{t})\leq(1+t/\cpi(\mu))^{-1}\,\chi^{2}(\nu\mmid\mu)$.
Applying this result to the forward step, along with the DPI for the
backward step, implies that $\PS$ achieves $\eu R_{q}(\pi_{N}^{X}\mmid\pi^{X})\leq\veps$
with
\[
N\lesssim qh^{-1}\cpi(\pi^{X})\log\frac{M_{q}}{\veps}\,,\quad\text{where }M_{q}=\Bnorm{\frac{\D\pi_{0}^{X}}{\D\pi^{X}}}_{L^{q}(\pi^{X})}\,.
\]

We provide a version of this result in terms of $\chi^{q}$-divergence
(generalizing the $\chi^{2}$-result of \cite{CCSW22improved}), as
working with $\chi^{q}$-divergence yields cleaner expressions and
a direct bound on the contraction coefficient $\eta_{\chi^{q}}$ under
\eqref{eq:pi}. Our proof follows the argument sketched above, combining
the general de Bruijn identity in terms of $\chi^{q}$-divergence
with \eqref{eq:pi}. Below, we note that any logconcave probability
measure satisfies \eqref{eq:pi} with $\cpi<\infty$, as established
in \cite{KLS95isop,bobkov99isoperimetric}.
\begin{lem}
\label{lem:chiq-SDPI} Let $\mu$ be a logconcave probability measure
over $\Rd$. For any $q\geq2$, it holds that
\[
\eta_{\chi^{q}}(\mu,t)\leq\bpar{1+\frac{t}{\cpi(\mu)}}^{-2/q}\quad\text{for }t>0\,.
\]
\end{lem}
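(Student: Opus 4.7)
My plan is to derive a differential inequality for $G(t) - 1 = \chi^{q}(\nu_{t}\mmid\mu_{t})$, where $G(t) := \int f_{t}^{q}\,\D\mu_{t}$ and $f_{t} := \D\nu_{t}/\D\mu_{t}$, and then integrate it using the convolution bound $\cpi(\mu_{t}) \leq \cpi(\mu) + t$ (which follows from a standard two-variable Poincar\'e argument applied to $X+Y$ with $X\sim\mu$ and $Y\sim\gamma_{t}$ independent, and holds for any probability measure $\mu$). The general de Bruijn identity of \cite{KO25strong}, combined with the heat equation $\partial_{t}\mu_{t} = \tfrac{1}{2}\Delta\mu_{t}$ and the associated evolution $\partial_{t} f_{t} = \tfrac{1}{2}\Delta f_{t} + \nabla f_{t}\cdot\nabla\log\mu_{t}$ for $f_{t}$, should give
\[
\partial_{t} G(t) \;=\; -\frac{q(q-1)}{2}\int f_{t}^{q-2}\,\abs{\nabla f_{t}}^{2}\,\D\mu_{t} \;=\; -\frac{2(q-1)}{q}\int \abs{\nabla f_{t}^{q/2}}^{2}\,\D\mu_{t},
\]
after integration by parts (with regularity justified by the standard approximation arguments outlined in the preceding discussion). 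Applying \eqref{eq:pi} on $\mu_{t}$ to $g_{t} := f_{t}^{q/2}$ then yields $\int \abs{\nabla g_{t}}^{2}\,\D\mu_{t} \geq (G(t) - m(t)^{2})/\cpi(\mu_{t})$, where $m(t) := \int g_{t}\,\D\mu_{t}$.

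The crux of the proof is converting $G - m^{2}$ into $G - 1$ via the pointwise inequality
\[
(q-1)\,(G - m^{2}) \;\geq\; G - 1 \qquad\Longleftrightarrow\qquad (q-2)\,G + 1 \;\geq\; (q-1)\,m^{2},
\]
which I plan to prove by combining Jensen and H\"older interpolation. Jensen applied to the convex map $x\mapsto x^{q/2}$ (valid since $q\geq 2$) yields $m \geq 1$. The interpolation inequality
\[
\norm{g_{t}}_{L^{1}(\mu_{t})} \;\leq\; \norm{g_{t}}_{L^{2/q}(\mu_{t})}^{1-\theta}\,\norm{g_{t}}_{L^{2}(\mu_{t})}^{\theta}, \qquad \theta := \frac{q-2}{q-1},
\]
together with the constraint $\norm{g_{t}}_{L^{2/q}(\mu_{t})}^{2/q} = \int f_{t}\,\D\mu_{t} = 1$, then gives $m \leq G^{\theta/2}$, i.e., $G \geq m^{2(q-1)/(q-2)}$ for $q>2$. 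Substituting reduces the task to verifying $\phi(u)\geq 0$ on $[1,\infty)$, where $\phi(u) := (q-2)\,u^{2(q-1)/(q-2)} - (q-1)\,u^{2} + 1$. Since $\phi(1) = 0$ and
\[
\phi'(u) \;=\; 2(q-1)\,u\,\bpar{u^{2/(q-2)} - 1} \;\geq\; 0 \quad\text{for }u\geq 1,
\]
the desired inequality holds; the case $q=2$ is immediate from $m = 1$.

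Putting everything together yields
\[
\partial_{t}(G(t) - 1) \;\leq\; -\frac{2}{q\,(\cpi(\mu) + t)}\,(G(t) - 1),
\]
and \Gronwall integration gives $\chi^{q}(\nu_{t}\mmid\mu_{t}) \leq \chi^{q}(\nu\mmid\mu)\,(1+t/\cpi(\mu))^{-2/q}$. Taking the supremum over $\nu$ proves the claim. The main obstacle is the pointwise inequality above, whose proof crucially exploits the normalization $\int f_{t}\,\D\mu_{t} = 1$ through the H\"older interpolation endpoint $L^{2/q}$; the de Bruijn, Poincar\'e, and convolution steps are standard.
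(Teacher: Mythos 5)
Your proof is correct, and it reaches exactly the same differential inequality as the paper, $\de_{t}\chi^{q}(\nu_{t}\mmid\mu_{t})\leq-\frac{2}{q\cpi(\mu_{t})}\,\chi^{q}(\nu_{t}\mmid\mu_{t})$, followed by the same integration using $\cpi(\mu_{t})\leq\cpi(\mu)+t$. The difference lies in the middle step. The paper cites the R\'enyi--Poincar\'e inequality \eqref{eq:Renyi-PI} of Vempala--Wibisono as a black box and then applies the elementary inequality $(q-1)\,x\,(1-x^{-1/(q-1)})\geq x-1$ for $x\geq1$ to convert the resulting dissipation bound into one in terms of $\chi^{q}$ itself. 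You instead apply the ordinary Poincar\'e inequality \eqref{eq:pi} to $g_{t}=f_{t}^{q/2}$ and prove the needed comparison $(q-1)\,(G-m^{2})\geq G-1$ from scratch via Jensen plus the H\"older interpolation $\norm{g_{t}}_{L^{1}}\leq\norm{g_{t}}_{L^{2/q}}^{1-\theta}\norm{g_{t}}_{L^{2}}^{\theta}$ with the normalization $\norm{g_{t}}_{L^{2/q}}^{2/q}=\int f_{t}\,\D\mu_{t}=1$; I checked your exponent $\theta=\frac{q-2}{q-1}$, the reduction to $\phi(u)=(q-2)\,u^{2(q-1)/(q-2)}-(q-1)\,u^{2}+1\geq0$ on $[1,\infty)$, and the derivative computation, and they are all correct (the interpolation is legitimate even though $2/q<1$, as it is just H\"older applied to $g^{(1-\theta)}\cdot g^{\theta}$). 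In effect you have reproved the R\'enyi--Poincar\'e inequality inline, so your argument is more self-contained while the paper's is shorter by citation; both buy the same constant $2/q$ in the exponent. Two minor points you gloss over: the hypothesis $\E_{\mu}[\abs{\cdot}^{4}]<\infty$ needed for the general de Bruijn identity holds here because log-concavity gives \eqref{eq:pi} and hence exponential integrability (the paper makes this explicit), and the finiteness of $G(t)$ needed to apply \eqref{eq:pi} to $g_{t}$ follows from the data-processing inequality since the contraction coefficient only takes the supremum over $\nu$ with finite $\chi^{q}(\nu\mmid\mu)$.
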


To prove this result, we recall the general version of de Bruijn's
identity from \cite[Proposition 1.12]{KO25strong}.
\begin{prop}
[General de Bruijn's identity]\label{prop:KO-contraction} Let $\mu$
and $\nu$ be probability measures on $\Rd$ such that $\chi^{q}(\nu\mmid\mu)<\infty$
and $\E_{\mu}[\abs{\cdot}^{4}]<\infty$. Then, for any $t>0$ and
$q>1$,
\[
\de_{t}\chi^{q}(\nu_{t}\mmid\mu_{t})=-\frac{q\,(q-1)}{2}\,\E_{\mu_{t}}\Bbrack{\bpar{\frac{\D\nu_{t}}{\D\mu_{t}}}^{q}\,\babs{\nabla\log\frac{\D\nu_{t}}{\D\mu_{t}}}^{2}}=-\frac{q-1}{2}\,\Bnorm{\frac{\D\nu_{t}}{\D\mu_{t}}}_{L^{q}(\mu_{t})}^{q}\,\msf{RFI}_{q}(\nu_{t}\mmid\mu_{t})\,,
\]
where $\msf{RFI}_{q}(\nu\mmid\mu)=q\,\E_{\mu}[(\frac{\D\nu}{\D\mu})^{q}\,\abs{\nabla\log\frac{\D\nu}{\D\mu}}^{2}]/\E_{\mu}[(\frac{\D\nu}{\D\mu})^{q}]$
denotes the $q$-R\'enyi Fisher information.
\end{prop}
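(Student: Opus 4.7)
The plan is to establish the differential inequality $\partial_t \chi^q(\nu_t \mmid \mu_t) \leq -\frac{2}{q\,\cpi(\mu_t)}\,\chi^q(\nu_t \mmid \mu_t)$ and then integrate in $t$. The starting point is Proposition~\ref{prop:KO-contraction}. Writing $g := \D\nu_t/\D\mu_t$ and $f := g^{q/2}$, one has $q\,\E_{\mu_t}[g^q\,\abs{\nabla\log g}^2] = (4/q)\int \abs{\nabla f}^2\,\D\mu_t$, so the identity becomes
\[
\partial_t \chi^q(\nu_t \mmid \mu_t) = -\frac{2(q-1)}{q}\int\abs{\nabla f}^2\,\D\mu_t\,.
\]
Set $m := \E_{\mu_t} g^q = \|f\|_{L^2(\mu_t)}^2$, so that $\chi^q(\nu_t \mmid \mu_t) = m - 1$; it then suffices to establish the functional inequality $m - 1 \leq (q-1)\,\cpi(\mu_t) \int \abs{\nabla f}^2\,\D\mu_t$.

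Since convolution preserves log-concavity (Pr\'ekopa-Leindler), $\mu_t = \mu * \gamma_t$ is log-concave and hence satisfies \eqref{eq:pi}. Applied to $f$ this gives $m - (\E_{\mu_t} f)^2 = \var_{\mu_t} f \leq \cpi(\mu_t)\int\abs{\nabla f}^2\,\D\mu_t$. The crucial step is to upper-bound $(\E_{\mu_t} f)^2$ using the normalization $\E_{\mu_t} g = 1$: by Cauchy-Schwarz applied to $g^{q/2} = g^{(q-1)/2}\cdot g^{1/2}$,
\[
(\E_{\mu_t} g^{q/2})^2 \leq \E_{\mu_t} g^{q-1} \cdot \E_{\mu_t} g = \E_{\mu_t} g^{q-1}\,,
\]
and by log-convexity of $p \mapsto \log \E_{\mu_t} g^p$ (with $q-1 = \frac{1}{q-1}\cdot 1 + \frac{q-2}{q-1}\cdot q$) one obtains $\E_{\mu_t} g^{q-1} \leq m^{(q-2)/(q-1)}$. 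For $q \geq 2$ the exponent $(q-2)/(q-1)$ lies in $[0,1)$, so concavity of $x \mapsto x^{(q-2)/(q-1)}$ on $[1,m]$ yields the secant bound $m^{(q-2)/(q-1)} \leq 1 + \frac{q-2}{q-1}\,(m-1)$, i.e., $m - m^{(q-2)/(q-1)} \geq (m-1)/(q-1)$. Chaining these three inequalities,
\[
\frac{m-1}{q-1} \leq m - (\E_{\mu_t} f)^2 \leq \cpi(\mu_t)\int\abs{\nabla f}^2\,\D\mu_t\,,
\]
which substituted into the de Bruijn identity yields $\partial_t \chi^q(\nu_t \mmid \mu_t) \leq -\frac{2}{q\,\cpi(\mu_t)}\,\chi^q(\nu_t \mmid \mu_t)$.

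To conclude, I invoke the Poincar\'e analogue of the log-Sobolev bound $\clsi(\mu_t) \leq \clsi(\mu) + t$ from \cite{chafai04entropies}, namely $\cpi(\mu_t) \leq \cpi(\mu) + t$, which gives $\partial_t \log \chi^q(\nu_t \mmid \mu_t) \leq -\frac{2/q}{\cpi(\mu) + t}$. Integrating from $0$ to $t$ produces
\[
\chi^q(\nu_t \mmid \mu_t) \leq \Bpar{1 + \frac{t}{\cpi(\mu)}}^{-2/q}\chi^q(\nu \mmid \mu)\,,
\]
and taking a supremum over $\nu$ with $\chi^q(\nu \mmid \mu) \in \R_{>0}$ yields the claimed bound on $\eta_{\chi^q}(\mu,t)$. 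The main technical obstacle is regularity: one needs enough smoothness of $\D\nu_t/\D\mu_t$ to justify differentiation under the integral and integration by parts underlying the de Bruijn identity, and to apply \eqref{eq:pi} to $f = g^{q/2}$. Log-concavity of $\mu$ ensures $\E_\mu[\abs{\cdot}^4]<\infty$ so that Proposition~\ref{prop:KO-contraction} is directly applicable, and finiteness of $\chi^q(\nu\mmid\mu)$ may be assumed without loss of generality since the supremum defining $\eta_{\chi^q}$ is restricted to $\nu$ with $\chi^q(\nu\mmid\mu)$ finite and positive; remaining smoothness issues are handled by the standard mollification/approximation argument of \cite{KO25strong}.
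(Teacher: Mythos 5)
Your proposal does not prove the statement it was asked to prove. The statement is the general de Bruijn identity itself (Proposition~\ref{prop:KO-contraction}), but you declare that identity to be your ``starting point'' and instead derive the contraction bound $\eta_{\chi^{q}}(\mu,t)\leq(1+t/\cpi(\mu))^{-2/q}$, which is Lemma~\ref{lem:chiq-SDPI} of the paper---a downstream consequence. Nothing in your write-up establishes the identity $\de_{t}\chi^{q}(\nu_{t}\mmid\mu_{t})=-\frac{q(q-1)}{2}\,\E_{\mu_{t}}[(\frac{\D\nu_{t}}{\D\mu_{t}})^{q}\,\abs{\nabla\log\frac{\D\nu_{t}}{\D\mu_{t}}}^{2}]$: proving it requires differentiating $\int(\frac{\D\nu_{t}}{\D\mu_{t}})^{q}\,\D\mu_{t}$ along the simultaneous heat flow $\de_{t}\mu_{t}=\tfrac12\Delta\mu_{t}$, $\de_{t}\nu_{t}=\tfrac12\Delta\nu_{t}$, performing an integration by parts, and justifying both the interchange of derivative and integral and the vanishing of boundary terms---which is exactly where the hypotheses $\chi^{q}(\nu\mmid\mu)<\infty$ and $\E_{\mu}[\abs{\cdot}^{4}]<\infty$ enter. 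None of this appears in your argument, and it cannot be recovered from it, since your argument is circular with respect to the proposition. For calibration: the paper itself does not reprove this proposition either; it quotes it from \cite[Proposition 1.12]{KO25strong}, so the intended ``proof'' here is that citation, not a derivation of the SDPI from it.

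As a side remark, read as a proof of Lemma~\ref{lem:chiq-SDPI} your argument is essentially sound and takes a slightly different route from the paper: where the paper combines the de Bruijn identity with the R\'enyi Poincar\'e inequality \eqref{eq:Renyi-PI} of \cite{VW23rapid} and the elementary bound $(q-1)\,x\,(1-x^{-1/(q-1)})\geq x-1$, you apply the plain Poincar\'e inequality \eqref{eq:pi} to $f=(\frac{\D\nu_{t}}{\D\mu_{t}})^{q/2}$ and control $(\E_{\mu_{t}}f)^{2}$ via Cauchy--Schwarz, moment interpolation, and a secant estimate, i.e., you rederive the needed $\chi^{q}$-form of \eqref{eq:Renyi-PI} for $q\geq2$ from scratch. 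That is a legitimate alternative for the lemma, but it is not the assigned statement, and the proposition itself remains unproved in your proposal.
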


We also recall a R\'enyi version of \eqref{eq:pi} from \cite[Lemma 9]{VW23rapid}
that for $q\geq2$ and probability measure $\nu$ and $\mu$ with
$\nu\ll\mu$ and smooth $\nu/\mu$,
\begin{equation}
1-\exp\bpar{-\eu R_{q}(\nu\mmid\mu)}\leq\frac{q\cpi(\mu)}{4}\,\msf{RFI}_{q}(\nu\mmid\mu)\,.\tag{\ensuremath{\msf{R\text{-}PI}}}\label{eq:Renyi-PI}
\end{equation}

\begin{proof}
[Proof of Lemma~\ref{lem:chiq-SDPI}] We combine the general de
Bruijn identity with the R\'enyi PI. For the former, we note that
$\E_{\mu}[\abs{\cdot}^{4}]<\infty$ since \eqref{eq:pi} ensures exponential
integrability \cite{BU83inequality,GM83topological} (i.e., $\int\exp(s\,\abs{\cdot})\,\D\mu<\infty$
for $s^{2}<4/\cpi(\mu)$; see \cite[Proposition 4.4.2]{BGL14analysis}).
For the latter, $\nu_{t}$ and $\mu_{t}$ are smooth by an elementary
property of the heat kernel.

Using $\chi^{q}+1=\norm{\frac{\D\nu_{t}}{\D\mu_{t}}}_{L^{q}(\mu_{t})}^{q}:=L^{q}\ge1$
and combining the two, we deduce that 
\[
\de_{t}\chi^{q}(\nu_{t}\mmid\mu_{t})\leq-\frac{2}{q\cpi(\mu_{t})}\,(q-1)\,L^{q}(1-L^{-\frac{q}{q-1}})\,.
\]
Since $(q-1)\cdot x\,(1-x^{-1/(q-1)})\geq x-1$ for $x\geq1$, we
obtain 
\[
\de_{t}\chi^{q}(\nu_{t}\mmid\mu_{t})\leq-\frac{2}{q\cpi(\mu_{t})}\,(L^{q}-1)=-\frac{2}{q\cpi(\mu_{t})}\,\chi^{q}(\nu_{t}\mmid\mu_{t})\,.
\]
Using $\cpi(\mu_{t})\leq\cpi(\mu)+t$ and solving the differential
inequality yields 
\[
\chi^{q}(\nu_{t}\mmid\mu_{t})\leq\bpar{1+\frac{t}{\cpi(\mu)}}^{-2/q}\,\chi^{q}(\nu\mmid\mu)\,,
\]
which completes the proof.
\end{proof}
\begin{rem}
[R\'enyi version] Under the same condition $\E_{\mu}[\abs{\cdot}^{4}]<\infty$,
one can rederive \eqref{eq:Reniy-contraction-PI} by following the
argument of \cite{CCSW22improved}. Using the identity $\eu R_{q}=\frac{1}{q-1}\,\log(1+\chi^{q})$
together with the chain rule and de Bruijn's identity, we obtain:
\[
\de_{t}\eu R_{q}(\nu_{t}\mmid\mu_{t})=\frac{1}{q-1}\,\frac{\de_{t}\chi^{q}(\nu_{t}\mmid\mu_{t})}{L^{q}}=-\half\,\msf{RFI}_{q}(\nu_{t}\mmid\mu_{t})\,.
\]
When applying the R\'enyi PI, the inequality $1-e^{-x}\geq1/2$ for
$x\geq1$ is used when $\eu R_{q}\geq1$, and $1-e^{-x}\geq x/2$
for $x\leq1$ when $\eu R_{q}\leq1$. These control the dissipation
of the R\'enyi divergence in each regime.
\end{rem}

The mixing time of $\PS$ follows as a direct consequence of the contraction
result above. We ignore via the DPI the additional contraction due
to the backward step, although the backward step is also known to
exhibit similar contraction under suitable regularity assumptions
\cite{CCSW22improved}.
\begin{prop}
\label{prop:mixing-general} Let $q\geq2$ and $\E_{\pi^{X}}[\abs{\cdot}^{4}]<\infty$.
Then, the law $\pi_{k}^{X}$ of $X_{k}$ returned by the $\ps$ satisfies
$\chi^{q}(\pi_{k+1}^{X}\mmid\pi^{X})\leq\eta_{\chi^{q}}(\pi^{X},h)\,\chi^{q}(\pi_{k}^{X}\mmid\pi^{X})$.
Given $\veps\in(0,1)$, we can ensure that $\chi^{q}(\pi_{N}^{X}\mmid\pi^{X})\leq\veps$
after
\[
N\lesssim qh^{-1}\cpi(\pi^{X})\log\frac{\chi^{q}(\pi_{0}^{X}\mmid\pi^{X})}{\veps}\quad\text{iterations\,.}
\]
Moreover, $\eu R_{q}(\pi_{N}^{X}\mmid\pi^{X})\leq\veps$ after
\[
N\lesssim q\,\bpar{1\vee h^{-1}\cpi(\pi^{X})}\log\frac{M_{q}}{\veps}\quad\text{iterations\,.}
\]
\end{prop}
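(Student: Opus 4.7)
The plan is to combine Lemma~\ref{lem:chiq-SDPI} with the data-processing inequality~\eqref{eq:DPI} exactly as foreshadowed in the preceding paragraph. One iteration of $\PS$ is the composition of the forward kernel $P_h$ (which convolves the law of $X_k$ with $\gamma_h$) and the backward kernel $Q_h$. All the contraction occurs in the forward step: by Lemma~\ref{lem:chiq-SDPI},
\[
\chi^{q}(\pi_{k+1}^{Y}\mmid\pi^{Y})=\chi^{q}(\pi_{k}^{X}*\gamma_{h}\mmid\pi^{X}*\gamma_{h})\leq\eta_{\chi^{q}}(\pi^{X},h)\,\chi^{q}(\pi_{k}^{X}\mmid\pi^{X}),
\]
while the backward step uses the DPI: $\chi^{q}(\pi_{k+1}^{X}\mmid\pi^{X})=\chi^{q}(\pi_{k+1}^{Y}Q_{h}\mmid\pi^{Y}Q_{h})\leq\chi^{q}(\pi_{k+1}^{Y}\mmid\pi^{Y})$. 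Chaining the two gives the one-step contraction asserted in the proposition. Note also that $\E_{\pi^{X}}[\abs\cdot^{4}]<\infty$ is exactly the regularity required to invoke Lemma~\ref{lem:chiq-SDPI}.

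Iterating the one-step bound $N$ times yields $\chi^{q}(\pi_{N}^{X}\mmid\pi^{X})\leq\eta^{N}\,\chi^{q}(\pi_{0}^{X}\mmid\pi^{X})$ with $\eta\leq(1+h/\cpi(\pi^{X}))^{-2/q}$. Taking logarithms and using the elementary inequality $\log(1+x)\gtrsim x\wedge 1$ gives
\[
\log\frac{1}{\eta}\geq\frac{2}{q}\,\log\Bpar{1+\frac{h}{\cpi(\pi^{X})}}\gtrsim\frac{1}{q}\,\Bpar{\frac{h}{\cpi(\pi^{X})}\wedge 1},
\]
so solving $\eta^{N}\chi^{q}(\pi_{0}^{X}\mmid\pi^{X})\leq\veps$ produces $N\lesssim q\,(1\vee h^{-1}\cpi(\pi^{X}))\,\log\frac{\chi^{q}(\pi_{0}^{X}\mmid\pi^{X})}{\veps}$. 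In the natural regime $h\leq\cpi(\pi^{X})$ (which is what our choice of $h$ respects), this collapses to the stated $qh^{-1}\cpi(\pi^{X})\log\frac{\chi^{q}(\pi_{0}^{X}\mmid\pi^{X})}{\veps}$.

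For the R\'enyi statement, a na\"ive conversion through $\eu R_{q}=\frac{1}{q-1}\log(1+\chi^{q})$ would inflate $\log M_{q}$ to $q\log M_{q}$, because $M_{q}^{q}=1+\chi^{q}(\pi_{0}^{X}\mmid\pi^{X})$. To avoid this, I would instead invoke the two-regime R\'enyi contraction~\eqref{eq:Reniy-contraction-PI} of \cite{CCSW22improved} on the forward step and the DPI on the backward step. This splits the trajectory into an \emph{additive} phase in which $\eu R_{q}$ drops by $\frac{1}{q}\log(1+h/\cpi(\pi^{X}))$ per iteration while $\eu R_{q}\geq 1$, taking $N_{1}\lesssim q\,(1\vee h^{-1}\cpi(\pi^{X}))\,\eu R_{q}(\pi_{0}^{X}\mmid\pi^{X})$ iterations, and an \emph{exponential} phase in which $\eu R_{q}$ is multiplied by $(1+h/\cpi(\pi^{X}))^{-1/q}$ per iteration once $\eu R_{q}<1$, taking $N_{2}\lesssim q\,(1\vee h^{-1}\cpi(\pi^{X}))\,\log(1/\veps)$ further iterations. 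Using $\eu R_{q}(\pi_{0}^{X}\mmid\pi^{X})=\frac{q}{q-1}\log M_{q}\asymp\log M_{q}$ for $q\geq2$, the total $N_{1}+N_{2}$ matches the claimed bound $q\,(1\vee h^{-1}\cpi(\pi^{X}))\,\log\frac{M_{q}}{\veps}$.

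The main obstacle is precisely this asymmetry between $\chi^{q}$ and $\eu R_{q}$: a single-shot $\chi^{q}$-argument forces the awkward conversion $\log\chi^{q}(\pi_{0}^{X}\mmid\pi^{X})\leadsto q\log M_{q}$, and recovering the sharp $\log M_{q}$ rate seems to require the regime-dependent contraction of \cite{CCSW22improved}, which in turn relies on~\eqref{eq:Renyi-PI} rather than on~\eqref{eq:pi} alone. Everything else is bookkeeping in terms of $\cpi(\pi^{X})$ and $h$.
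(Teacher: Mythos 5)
Your proposal is correct and follows essentially the same route the paper intends: the one-step bound is the forward-step SDPI (Lemma~\ref{lem:chiq-SDPI} / the definition of $\eta_{\chi^{q}}$) composed with the DPI for the backward step, the $\chi^{q}$ iteration count follows by unrolling, and the R\'enyi bound uses the two-regime contraction \eqref{eq:Reniy-contraction-PI} exactly as the paper signals in the display immediately following that equation. Your observation that a direct $\chi^{q}\to\eu R_{q}$ conversion would inflate $\log M_{q}$ to $q\log M_{q}$, necessitating the regime-dependent argument, is accurate and is precisely why the proposition states the two conclusions separately.
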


We adapt this result to the setting of logconcave probability measures.
\begin{lem}
\label{lem:psunif-mixing}For any $\veps\in(0,1)$, $q\geq2$, and
logconcave probability measure $\pi^{X}$ over $\Rd$, we can ensure
$\eu R_{q}(\pi_{N}^{X}\mmid\pi^{X})\leq\veps$ after
\[
N\lesssim q\,\bpar{1\vee h^{-1}\norm{\cov\pi^{X}}\log d}\log\frac{M_{q}}{\veps}\quad\text{iterations\,.}
\]
\end{lem}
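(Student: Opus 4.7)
The plan is to combine the general mixing bound of Proposition~\ref{prop:mixing-general} with the state-of-the-art bound on the Poincar\'e constant of log-concave measures. Proposition~\ref{prop:mixing-general} already gives the contraction-based bound
\[
N \lesssim q\,\bpar{1 \vee h^{-1}\cpi(\pi^{X})} \log\frac{M_q}{\veps}\,,
\]
conditional on the moment assumption $\E_{\pi^{X}}[\abs{\cdot}^{4}] < \infty$. So the task reduces to two checks: verifying this moment condition for an arbitrary log-concave $\pi^{X}$, and translating $\cpi(\pi^{X})$ into the stated covariance-based quantity.

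First, I would dispatch the moment condition. Any log-concave probability measure on $\Rd$ satisfies a Poincar\'e inequality with a finite constant (\cite{KLS95isop,bobkov99isoperimetric}), and the exponential-integrability consequence of \eqref{eq:pi} (as used in the proof of Lemma~\ref{lem:chiq-SDPI}) yields $\int \exp(s\,\abs{\cdot})\,\D\pi^{X} < \infty$ for $s$ sufficiently small; in particular, all polynomial moments of $\pi^{X}$ are finite, so the hypothesis of Proposition~\ref{prop:mixing-general} is satisfied.

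Second, I would invoke Klartag's bound on the Cheeger constant, $\cch^{2}(\pi^{X}) \gtrsim (\norm{\cov \pi^{X}}\log d)^{-1}$, from the recent resolution of the KLS conjecture up to a $\log d$ factor \cite{Klartag23log}. Combined with the Cheeger inequality $\cpi \lesssim \cch^{-2}$ (the Buser--Ledoux direction), which is valid in the log-concave regime as noted in the introduction, this yields
\[
\cpi(\pi^{X}) \lesssim \norm{\cov \pi^{X}}\log d\,.
\]
Substituting this bound into the inequality from Proposition~\ref{prop:mixing-general} gives the stated iteration count, completing the argument.

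I do not anticipate a real obstacle here: the lemma is essentially a bookkeeping step that plugs the best available Poincar\'e bound into an already-proven mixing guarantee. The only subtlety worth flagging in the writeup is that the $\log d$ factor is inherited from \cite{Klartag23log} and would be removed if the KLS conjecture were confirmed without this factor; otherwise the proof is mechanical.
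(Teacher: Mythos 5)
Your proposal is correct and follows essentially the same route as the paper: the paper's proof likewise notes that log-concavity gives \eqref{eq:pi} (hence $\E_{\pi^{X}}[\abs{\cdot}^{4}]<\infty$) and then substitutes $\cpi(\pi^{X})\lesssim\norm{\cov\pi^{X}}\log d$ from \cite{Klartag23log} into Proposition~\ref{prop:mixing-general}. Your extra detail on exponential integrability and the Cheeger--Buser translation is just a fuller spelling-out of the same two checks.
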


\begin{proof}
Any logconcave probability measures satisfy \eqref{eq:pi} (and thus
$\E_{\pi^{X}}[\abs{\cdot}^{4}]<\infty$). The claim then follows from
the proposition above with $\cpi(\pi^{X})\lesssim\norm{\cov\pi^{X}}\log d$
for logconcave probability measures \cite{Klartag23log}.
\end{proof}

\subsection{Per-step analysis\label{subsec:uniform-query-complexity}}

We now focus on the case of $\pi^{X}\propto\ind_{\K}$, assuming $B_{1}(0)\subset\K$
by translation. Hereafter, we assume that the starting distribution
$\pi_{0}^{X}$ is $M_{q}$-warm with respect to $\pi^{X}$. By the
DPI, the same holds between $\pi_{1}^{Y}=\pi_{0}^{X}*\gamma_{h}$
and $\pi^{Y}=\pi^{X}*\gamma_{h}$ (i.e., $\norm{\D\pi_{1}^{Y}/\D\pi^{Y}}_{L^{q}(\pi^{Y})}<\infty$),
and this property is preserved throughout subsequent iterations (i.e.,
$\eu R_{q}(\pi_{k}^{X}\mmid\pi^{X})\leq\eu R_{q}(\pi_{0}^{X}\mmid\pi^{X})$).
For brevity, we will use the $L^{q}$-warmness, $\chi^{q}$-warmness,
or $\eu R_{q}$-warmness interchangeably, as justified by \eqref{eq:div-equivalence}.

\subsubsection{High-level idea of analysis}

Let us first analyze $\psunif$ as a one-attempt algorithm that may
declare failure if the inner rejection sampler exceeds the threshold
$\tau$. Fix an iteration horizon $N\in\mathbb{N}$, step size $h>0$,
and inner cap $\tau\in\mathbb{N}$.

Let $S_{i}$ denote the \emph{survival} event that failure has not
occurred up to and including step $i$, with $S_{0}=\Omega$ and full
success event $S:=S_{N}$. Let $C_{i}\ge0$ be the number of membership
queries used by the backward step at step $i$. The total query cost
is then $C:=\sum_{i=1}^{N}\ind[S_{i-1}]\,C_{i}$. Note that
\[
\E C=\sum_{i=1}^{N}\E\bbrack{\ind[S_{i-1}]\,C_{i}}\leq\sum_{i=1}^{N}\E C_{i}\,,
\]
so it suffices to bound $\E C_{i}$. Similarly, if $F_{i}$ denotes
the event of failure \emph{at} step $i$, and $q_{i}$ denotes the
cap-hit probability at step $i$, then 
\[
\P(F_{i})=\E\bbrack{\ind[S_{i-1}]\,q_{i}}\leq\E q_{i},\qquad\P(S^{c})\le\sum_{i=1}^{N}\E q_{i}(Y_{i})\,.
\]
These inequalities allow us to carry out the main per-step analysis
under the clean unconditional intermediate laws (e.g., $Y_{i}\sim\pi_{i}^{Y}$),
without explicitly tracking the conditional distributions at each
step.

\paragraph{(1) Bias from failure.}

It may not be immediately clear whether the transition kernel of $\PS$
is preserved under the algorithmic modification of imposing a retry
threshold $\tau$. For completeness, we verify that the backward step
is indeed unchanged by this modification.
\begin{lem}
\label{lem:correctness_backward} Upon success of the rejection sampling
in the backward step, the accepted sample is distributed according
to $\pi^{X|Y=y}$.
\end{lem}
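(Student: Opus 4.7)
The plan is to unfold the rejection-sampling procedure trial by trial and exploit the i.i.d.\ structure of the proposals. Let $Z_1,Z_2,\dots,Z_\tau$ be i.i.d.\ draws from the proposal $\mu_y = \mc N(y,hI_d)$, and let $A_i = \{Z_i \in \K\}$ be the acceptance events (which are i.i.d.\ by construction). The backward step, as implemented, returns $Z_N$ where $N := \min\{i \le \tau : A_i \text{ occurs}\}$, declaring success on the event $\mss{S} := \{N \le \tau\} = \bigcup_{k=1}^{\tau}\{A_k\}$.

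First I would verify the base case (no threshold): by the design of the proposal,
\[
\Par{\mu_y \mmid A_1}(\D x) \;=\; \frac{\mathds{1}_{\K}(x)\,\mu_y(\D x)}{\mu_y(\K)} \;\propto\; \gamma_h(x-y)\,\ind_{\K}(x) \;\propto\; \pi^{X|Y=y}(\D x),
\]
which is exactly the assumed identity $(\mu_y\mid\text{accepted})=Q_h(y,\cdot)$ specialized to the uniform case. Then, for any measurable $B \subset \Rd$ and any $1 \le k \le \tau$, independence of the trials gives
\[
\Par{Z_N \in B,\, N=k} \;=\; \Par{A_1^c}^{k-1}\Par{Z_k \in B,\, A_k} \;=\; \Par{A_1^c}^{k-1}\Par{A_1}\,\Par{\mu_y\mid A_1}(B).
\]
Summing over $k=1,\dots,\tau$ and dividing by $\Par{\mss{S}} = \sum_{k=1}^{\tau}\Par{A_1^c}^{k-1}\Par{A_1}$, I get
\[
\Par{Z_N \in B \mmid \mss{S}} \;=\; \Par{\mu_y \mid A_1}(B) \;=\; \pi^{X|Y=y}(B),
\]
which is the desired statement.

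There is no real obstacle here; the only subtlety is simply recognizing that the finite threshold $\tau$ does not bias the conditional distribution of the accepted point, precisely because the trials are i.i.d.\ and the conditional law $\mc L(Z_k\mid A_k)$ does not depend on $k$. Consequently, restarting the entire iteration on failure (used in the outer algorithm) also preserves correctness, as the discarded $y_{k+1}$ is drawn afresh from $\pi^{Y|X=x_k}$ and is independent of the past failed trials.
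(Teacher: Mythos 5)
Your proof is correct and follows essentially the same route as the paper's: decompose over the index of the first accepted trial, use independence to factor the failure probabilities, and observe that the geometric factors cancel upon normalizing by the success probability. The only (harmless) addition is that you explicitly verify the base case $(\mu_y\mid A_1)=\pi^{X|Y=y}$ for the uniform-sampling proposal, which the paper takes as a standing assumption on $\mu_y$.
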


\begin{proof}
Consider the proposal $\mu_{y}$ with success probability $p_{y}>0$
such that $X_{i}\sim\mu_{y}$ for each retry $i\in[\tau]$ satisfies
$\law(X_{i}\,|\,\text{accepted})=\pi^{X|Y=y}$. Let $X$ be the random
sample given by the backward step. For any event $A$ in $\Rd$, we
have
\begin{align*}
\P(X\in A\,|\,\text{accepted}) & =\frac{\P(X\in A,\,\text{success})}{\P(\text{success})}\\
 & =\frac{\sum_{i=1}^{\tau}\P(\text{fail in the first }(i-1)\text{ trials},\,X_{i}\ \text{accepted},\,X_{i}\in A)}{\sum_{i=1}^{\tau}\P(\text{success in the }i\text{-th trial})}\\
 & =\frac{\sum_{i=1}^{\tau}(1-p_{y})^{i-1}\,\P(X_{i}\in A,\,X_{i}\ \text{accepted})}{\sum_{i=1}^{\tau}(1-p_{y})^{i-1}p_{y}}=\frac{\P(X_{1}\in A,\,X_{1}\ \text{accepted})}{p_{y}}\\
 & =\P(X_{1}\in A\,|\,\text{accepted})=\pi^{X|Y}(A)\,,
\end{align*}
which completes the proof.
\end{proof}
However, the law of output conditioned on success introduces a small
bias, which is bounded roughly by the failure probability of the algorithm.
\begin{lem}
[Bias from success, \cite{KVZ26INO}] \label{lem:bias-from-failure}
Let $S$ be the event that the failure is not declared over the required
$N$ iterations, and suppose $\P(S)\geq1-\eta$. Let $\pi_{N}^{X}$
be the law of the uncapped $\psunif$ output after $N$ iterations,
and $\hat{\pi}_{N}^{X}$ be the law of $\psunif$ conditioned on $S$
(i.e., $\hat{\pi}_{N}^{X}:=\law(X_{N}\mid S)$). Then,
\begin{align*}
\frac{\D\hat{\pi}_{N}^{X}}{\D\pi_{N}^{X}} & \leq\frac{1}{1-\eta}\quad\textup{so}\quad\eu R_{\infty}(\hat{\pi}_{N}^{X}\mmid\pi_{N}^{X})\leq\log\frac{1}{1-\eta}\,,\\
\eu R_{q}(\hat{\pi}_{N}^{X}\mmid\pi^{X}) & \leq\eu R_{q}(\pi_{N}^{X}\mmid\pi^{X})+\frac{q}{q-1}\log\frac{1}{1-\eta}\qquad\text{for }q>1\,.
\end{align*}
\end{lem}

\paragraph{(2) Restart-until-success.}

A practical \emph{restart-until-success} strategy repeats independent
attempts until $S$ occurs. It outputs a sample with (conditional)
law $\hat{\pi}_{N}^{X}$ and terminates almost surely whenever $\P(S)>0$.
Moreover, if $\mathbb{P}(S)\ge1-\eta$, then the expected number of
attempts is at most $(1-\eta)^{-1}$ and 
\[
\E[C\mid S]\leq\frac{\E C}{\P(S)}\leq\frac{\E C}{1-\eta}\,,\qquad\E C_{\text{restart}}=\frac{\E C}{\P(S)}\leq\frac{\E C}{1-\eta}\,.
\]
It therefore suffices to bound the one-attempt success probability
$\P(S)$ and the unconditional expected cost $\E C$.

\subsubsection{Per-step analysis}

The analysis follows \cite{KV25faster} closely, which establishes
the query complexity of $\psunif:\eu R_{\Otilde(1)}\to\eu R_{2}$.
The only difference lies in the required number $N$ of iterations,
namely the number needed for contraction in $\eu R_{q}$. Thus, we
recall the relevant results from \S2.1 of \cite{KV25faster} and
adapt them minimally for our setting. With the parameter choices $Z=\frac{16NM_{2}}{\eta}$,
$c=\frac{\log\log Z}{2\log Z}$, $\tau=Z^{2}\log^{3}Z$, and $h=\frac{c}{d^{2}}$,
the failure probability satisfies $\E_{\pi_{k}^{Y}}[(1-\ell)^{\tau}]\leq\frac{\eta}{N}$.
The per-step complexity is bounded as follows: setting $q=1+\alpha$
with $\alpha\ge1\vee\log\tau$, it follows that $\E_{\pi_{k}^{Y}}[\frac{1}{\ell}\wedge\tau]\leq5M_{q}\log^{4}Z$.

We now prove the order-preserving complexity guarantees of $\psunif$
under an $O(1)$-warm start and sufficient condition on $q$.
\begin{prop}
\label{prop:unif-sampling} Let $\pi$ be the uniform distribution
over a convex body $\K\subset\Rd$ with $B_{1}\subset\K$ and $\Lambda=\norm{\cov\pi}$,
given access to a membership oracle for $\K$. Given $\veps>0$ ,
$\eta\in(0,1/2)$, $q\geq2$, and an initial distribution $\pi_{0}$
with $M_{q}=\norm{\nicefrac{\D\pi_{0}}{\D\pi}}_{L^{q}(\pi)}$, initialize
$\psunif$ with $\pi_{0}$ and iterate it $N=\Otilde(qd^{2}\Lambda\log\frac{1}{\veps}\log\frac{1}{\eta})$
times with $h=(2d^{2}\log Z)^{-1}$ and $\tau=Z^{2}\log^{3}Z$ for
$Z:=\frac{16NM_{2}}{\eta}$. If $q\geq1+\log\tau$ and $M_{q}\leq10$,
then
\begin{itemize}
\item The sufficient condition on $q$ is satisfied when $q\geq2\vee O(\log\frac{N}{\eta})$.
\item With probability at least $1-\eta$, $\psunif$ iterates $N$ times
without failure. Conditioned on this success, the output $Z\sim\nu$
satisfies $\eu R_{q}(\nu\mmid\pi)\leq\veps+2\log\frac{1}{1-\eta}$,
with $\Otilde(qd^{2}\Lambda\log\frac{1}{\veps}\log^{5}\frac{1}{\eta})$
membership queries in expectation.
\end{itemize}
\end{prop}

\begin{proof}
The uncapped $\psunif$ can ensure $\eu R_{q}(\pi_{N}^{X}\mmid\pi^{X})\leq\veps$
after iterating 
\[
N\gtrsim q\,\bpar{1\vee h^{-1}\cpi(\pi)}\log\frac{M_{q}}{\veps}\,.
\]
Under the choice of $h=cd^{-2}=\frac{\log\log Z}{2d^{2}\log Z}$
with $Z\asymp\frac{NM_{2}}{\eta}$, the required number $N$ of iterations
must satisfy an inequality of the form 
\[
N\gtrsim A\log(BN)\,,
\]
which can be fulfilled if $N\gtrsim A\log(AB)$. Therefore, the required
number of iterations is of order 
\[
N=\Otilde\bpar{qd^{2}\cpi(\pi)\log\frac{M_{q}}{\veps}\log\frac{M_{q}}{\eta}}=\Otilde\bpar{qd^{2}\Lambda\log\frac{1}{\veps}\log\frac{1}{\eta}}\,.
\]
where $\Lambda=\norm{\cov\pi}$, and the last equality follows from
the assumption $M_{q}\leq10$.

We now find a sufficient condition for ensuring $q\geq1+\log\tau$.
Observe that when $q\geq2$,
\[
1+\log\tau=1+\frac{q}{q-1}\log(Z\log^{4}Z)\lesssim\log Z\lesssim\log\frac{N}{\eta}\,.
\]
Hence, when $q\geq2\vee O(\log\frac{N}{\eta})$, then expected total
query complexity of $\psunif:\eu R_{q}\to\eu R_{q}$ from an $O(1)$-warm
start is 
\[
\Otilde\bpar{qd^{2}\Lambda\log\frac{1}{\veps}\log\frac{1}{\eta}\times\log^{4}Z}=\Otilde\bpar{qd^{2}\Lambda\log\frac{1}{\veps}\log^{5}\frac{1}{\eta}}\,.
\]
Lastly, the success probability satisfies $\P(S)\geq\eta$ by construction,
and by Lemma~\ref{lem:bias-from-failure}, the output law $\hat{\pi}_{N}^{X}$
conditioned on success satisfies 
\[
\eu R_{q}(\hat{\pi}_{N}^{X}\mmid\pi^{X})\leq\veps+2\log\frac{1}{1-\eta}\,,
\]
which completes the proof.
\end{proof}
Using this proposition, we prove Theorem~\ref{thm:unif-warm}.
\begin{proof}
[Proof of Theorem~\ref{thm:unif-warm}] Take $\eta=\veps<0.01$,
and restart the algorithm upon failure. Note that the expected number
of restarts is at most $0.02$. To invoke the proposition above, $q$
should satisfy $q\geq2\vee O(\log N)$ for $N=\Otilde(qd^{2}\Lambda\log^{2}\frac{1}{\veps})$.
To ensure 
\[
q\geq\log\Otilde\bpar{qd^{2}\Lambda\log^{2}\frac{1}{\veps}}(\geq\log N)\,,
\]
let us solve the inequality of the form $q\gtrsim A\log qB$. A sufficient
condition for this is simply $q\gtrsim A\log AB=\Otilde(A\log B)$,
so it suffices to enforce 
\begin{equation}
q\geq2\vee\Otilde\bpar{\log d^{2}\Lambda+\log\log\frac{1}{\veps}}\,.\label{eq:q-suff-unif}
\end{equation}
Hence, by Proposition~\ref{prop:unif-sampling}, the restart version
of $\psunif$ uses $\Otilde(qd^{2}\Lambda\log^{6}\frac{1}{\veps})$
total queries in expectation, and its output is distributed as $\hat{\pi}_{N}^{X}$
satisfying $\eu R_{q}(\hat{\pi}_{N}^{X}\mmid\pi^{X})\leq3\veps$.
Retaking $\veps\gets\veps/3$, we complete the proof.
\end{proof}

\section{Hypercontractivity under the simultaneous heat flow\label{sec:hypercontractivity}}

In this section, we establish hypercontractivity under simultaneous
heat flow, which will serve as a stepping stone for relaying a R\'enyi
warmness in annealing in the next section.

\subsection{Adjoint of the heat semigroups and the proximal sampler\label{subsec:intro-semigroup}}

We first recap the Markov semigroup theory for a gentle introduction
to heat semigroup and its adjoint. Interested readers may want to
refer to \cite{van14probability,BGL14analysis,chewi25log} for further
details on the semigroup theory. Meanwhile, we take detour to $\PS$,
since it sets up a context in which the introduction of heat semigroups
and its adjoint is natural.

\paragraph{Markov semigroup theory.}

Given a (time-homogeneous) Markov process $(X_{t})_{t\geq0}$ on a
domain $E$, its associated \emph{Markov semigroup} $(P_{t})_{t\geq0}$
is defined as a family of functional operators such that
\[
(P_{t}f)(x):=\E[f(X_{t})\mid X_{0}=x]\quad\text{for every bounded measurable function }f:E\to\R\,.
\]
One could readily check that $P_{t}$ is a linear operator with $P_{0}=\id$,
$P_{t+s}=P_{t}\circ P_{s}$ for any $t,s\geq0$ (semigroup property),
$P_{t}\ind=\ind$ (mass conservation), and $P_{t}f\geq0$ for any
$f\geq0$ (positivity preserving). Below, we omit $E$ in the integral
sign.

A measure $\mu$ is said to  be \emph{invariant} (or stationary) for
$P_{t}$ if $\int P_{t}f\,\D\mu=\int f\,\D\mu$ for every $t\geq0$.
In addition, it is said to be \emph{reversible} for $P_{t}$ if $\int P_{t}f\,g\,\D\mu=\int f\,P_{t}g\,\D\mu$
for any $f,g\in L^{2}(\mu)$ and $t\geq0$. For any $p\geq1$, it
follows from Jensen's inequality that $P_{t}$ is a contraction operator
on the bounded functions in $L^{p}(\mu)$ (i.e., $\norm{P_{t}f}_{L^{p}(\mu)}\leq\norm f_{L^{p}(\mu)}$).
This furnishes the extension of $P_{t}$ to $L^{p}(\mu)$ by a density
argument.

\paragraph{The proximal sampler.}

As examined in \cite{CCSW22improved} (with further details in \cite{chewi25log}),
the forward step (i.e., $y_{k+1}\sim\pi^{Y|X=x_{k}}=\mc N(x_{k},hI_{d})$)
can be viewed as simulating Brownian motion for time $h$ started
at $x_{k}$; when $\D X_{t}=\D B_{t}$ with $X_{0}=x_{k}$, we have
$X_{h}\sim\law y_{k+1}$. The associated Markov semigroup of Brownian
motion is often called the \emph{heat semigroup}, which admits the
formula of $P_{t}f=f*\gamma_{t}=:f_{t}$. Based on the classical heat
equation $\de_{t}P_{t}f=\half\,\Delta P_{t}f$, rich theory for the
heat semigroup facilitates one to understand the forward step in a
better way (e.g., contraction under simultaneous heat flow).

This prompts a natural question whether one could also introduce a
semigroup structure to the backward step. It turns out that the backward
step can be interpreted as an SDE $(Z_{t}^{\leftarrow})_{t\geq0}$,
given by 
\[
\D Z_{t}^{\leftarrow}=\nabla\log(\pi^{X}*\gamma_{h-t})\,(Z_{t}^{\leftarrow})\,\D t+\D\bar{B}_{t}\qquad\text{for another Brownian motion }\bar{B}_{t}\,,
\]
so that if $Z_{0}^{\leftarrow}\sim\pi^{Y}$, then $Z_{h}^{\leftarrow}\sim\pi^{X}$.
Though it is non-trivial to justify, one could guess that $Z_{h}^{\leftarrow}\sim\pi^{X|Y=y}$
if $Z_{0}^{\leftarrow}=y$. Hence, one may be able to introduce a
semigroup structure to the backward step via, for any suitable test
function $f$, 
\[
\overline{Q}_{h}f(y):=\E[f(Z_{h}^{\leftarrow})\mid Z_{0}^{\leftarrow}=y]=\E_{\pi^{X|Y=y}}f\,.
\]
As seen shortly, Klartag and Putterman \cite{KP23spectral} defined
the same object as the $L^{2}(\pi)$-adjoint of the heat semigroups,
providing solid ground where similar rigorous theory can be developed
for the backward step.

\paragraph{$L^{2}$-adjoint of heat semigroups.}

Given a probability measure $\pi$, \cite{KP23spectral} viewed the
heat semigroup $P_{t}$ as a contraction operator from $L^{2}(\pi_{t})$
to $L^{2}(\pi)$:
\[
\norm{P_{t}f}_{L^{2}(\pi)}^{2}=\int(P_{t}f)^{2}\,\pi\leq\int P_{t}(f^{2})\,\pi=\int f^{2}\,P_{t}\pi=\norm f_{L^{2}(\pi_{t})}^{2}\,,
\]
where the equality in the middle follows simply from reversibility
of the heat semigroups. This perspective led them to define the \emph{adjoint
operator} $Q_{t}:L^{2}(\pi)\to L^{2}(\pi_{t})$ as $Q_{0}=\id$ and
\[
Q_{t}f=\frac{P_{t}(f\pi)}{P_{t}\pi}=\frac{P_{t}(f\pi)}{\pi_{t}}\quad\text{for }t>0\,.
\]

This formula can be naturally derived, though not mentioned in the
original manuscript, from the definition of adjoint: for any $f\in L^{2}(\pi)$
and $g\in L^{2}(\pi_{t})$, 
\[
\inner{f,P_{t}g}_{L^{2}(\pi)}=\inner{Q_{t}f,g}_{L^{2}(\pi_{t})}\,.
\]
For $\D Z_{t}=\D B_{t}$, one could rewrite the LHS as follows:
\[
\inner{f,P_{t}g}_{L^{2}(\pi)}=\int f\,P_{t}g\,\D\pi=\E\bbrack{f(Z_{0})\,\E[g(Z_{t})|Z_{0}]}=\E[f(Z_{0})\,g(Z_{t})]=\E_{Z_{t}\sim\pi_{t}}\bbrack{g(Z_{t})\,\E[f(Z_{0})|Z_{t}]}\,.
\]
Equating this with $\inner{Q_{t}f,g}_{L^{2}(\pi_{t})}$, we can deduce
that
\[
Q_{t}f(y)=\E[f(Z_{0})|Z_{t}=y]\,.
\]
Since $\law(Z_{0}|Z_{t}=y)=\pi^{X|Y=y}$ with $\pi^{X}=\pi$,
\[
Q_{t}f(y)=\int f\,\D\pi^{X|Y=y}=\frac{\int f(x)\,\pi^{X}(x)\,\gamma_{t}(y-x)\,\D x}{\int\pi^{X}(x)\,\gamma_{t}(y-x)\,\D x}=\frac{(f\pi)*\gamma_{t}}{\pi*\gamma_{t}}=\frac{P_{t}(f\pi)}{\pi_{t}}\,.
\]
We note that this heat adjoint could be viewed as the backward step
of $\PS$: $\overline{Q}_{t}f=Q_{t}f$. Also, it follows from the
above formula that the adjoint operator satisfies mass preservation
($Q_{t}\ind=\ind$) and positivity preserving ($Q_{t}f\geq0$ if $f\geq0$).
Moreover, it is a contraction operator; for $f\in L^{2}(\pi)\backslash\{0\}$,
by Cauchy-Schwarz
\[
(Q_{t}f)^{2}\,\pi_{t}=\frac{\bpar{P_{t}(f\pi)}^{2}}{\pi_{t}}\leq\frac{P_{t}(f^{2}\pi)\,P_{t}\pi}{\pi_{t}}=P_{t}(f^{2}\pi)\,.
\]
and integration of both sides yields $\norm{Q_{t}f}_{L^{2}(\pi_{t})}^{2}\leq\norm f_{L^{2}(\pi)}^{2}$.
We refer readers to \cite[\S B.2]{KV25localization}.

\subsection{Hypercontractivity of an adjoint of the heat semigroups}

We now show something stronger than mere contraction of $Q_{t}$ from
$L^{2}(\pi)$ to $L^{2}(\pi_{t})$, called hypercontractivity. To
begin with, we recall hypercontractivity of Markov semigroups under
\eqref{eq:lsi}:
\begin{prop}
[Gross]\label{prop:Gross-hypercontractivity} For $C>0$ and $p\in(1,\infty)$,
let $q(t)=1+(p-1)\,e^{2t/C}$ for $t\geq0$, and $P_{t}$ be a reversible
Markov semigroup with stationary measure $\nu$. Then, $\nu$ satisfies
\eqref{eq:lsi} with parameter $C$ if and only if 
\[
\norm{P_{t}f}_{L^{q(t)}(\nu)}\leq\norm f_{L^{p}(\nu)}\quad\text{for all }p\in(1,\infty),t\geq0,\text{ and }f\in L^{p}(\nu)\,.
\]
\end{prop}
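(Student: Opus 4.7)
The plan is to run Gross's classical differential-inequality argument, extracting both implications from a single computation of $\frac{d}{dt} \|P_t f\|_{L^{q(t)}(\nu)}$. For smooth, bounded, strictly positive $f$, set $g_t := P_t f$ and $N(t) := \int g_t^{q(t)} \, d\nu$, so that $F(t) := \|P_t f\|_{L^{q(t)}(\nu)} = N(t)^{1/q(t)}$ and logarithmic differentiation gives
\[
\frac{q(t)^2 N(t) F'(t)}{F(t)} \;=\; q(t)\, N'(t) - q'(t)\, N(t) \log N(t).
\]
Using $\partial_t g_t = L g_t$ for the generator $L$ of $(P_t)$, differentiating under the integral sign yields
\[
N'(t) \;=\; q'(t) \int g_t^{q(t)} \log g_t \, d\nu + q(t) \int g_t^{q(t)-1} L g_t \, d\nu,
\]
and reversibility together with integration by parts against the carré du champ gives $\int g_t^{q-1} L g_t \, d\nu = -\tfrac{4(q-1)}{q^2} \int |\nabla g_t^{q/2}|^2 \, d\nu$. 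Combining these identities collapses the expression to the clean form
\[
\frac{q(t)^2 N(t) F'(t)}{F(t)} \;=\; q'(t)\, \ent_\nu\bigl(g_t^{q(t)}\bigr) - 4\bigl(q(t)-1\bigr) \int \babs{\nabla g_t^{q(t)/2}}^2 \, d\nu.
\]

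For the forward direction (\eqref{eq:lsi} $\Rightarrow$ hypercontractivity), I apply \eqref{eq:lsi} with test function $g_t^{q(t)/2}$ to obtain $\ent_\nu(g_t^{q(t)}) \le 2C \int |\nabla g_t^{q(t)/2}|^2 \, d\nu$, whence
\[
\frac{q(t)^2 N(t) F'(t)}{F(t)} \;\le\; \bigl(2 C q'(t) - 4(q(t)-1)\bigr) \int \babs{\nabla g_t^{q(t)/2}}^2 \, d\nu.
\]
The bracket vanishes exactly when $q'(t) = 2(q(t)-1)/C$; solving this ODE with $q(0) = p$ gives $q(t)-1 = (p-1) e^{2t/C}$, which is precisely the prescribed exponent schedule. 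Thus $F'(t) \le 0$ and integration yields $\|P_t f\|_{L^{q(t)}(\nu)} \le \|f\|_{L^p(\nu)}$.

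For the converse (hypercontractivity $\Rightarrow$ \eqref{eq:lsi}), assuming the norm inequality holds I get $F(t) \le F(0)$ and hence $F'(0) \le 0$. Evaluating the boxed identity at $t=0$ with $q'(0) = 2(p-1)/C$ gives
\[
0 \;\ge\; \tfrac{2(p-1)}{C}\, \ent_\nu(f^p) - 4(p-1) \int \babs{\nabla f^{p/2}}^2 \, d\nu,
\]
which rearranges to $\ent_\nu(f^p) \le 2C \int |\nabla f^{p/2}|^2 \, d\nu$. Since $h := f^{p/2}$ ranges over arbitrary positive smooth functions as $f$ does, this is \eqref{eq:lsi} with constant $C$.

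The main obstacle is justifying the differentiation under the integral sign and the integration-by-parts step, both of which demand that $f$ be smooth, bounded, bounded away from $0$, and lie in the domain of $L$. The extension to general $f \in L^p(\nu)$ is handled by the standard approximation scheme in semigroup theory: truncate via $(f \wedge n) \vee \varepsilon$, mollify, apply the inequality to the regularized functions, and then pass to the limit using the dominated convergence theorem for the right-hand side and Fatou's lemma for the $L^{q(t)}$-norm on the left. This is routine in the Bakry--\'Emery framework and does not affect the core computation above.
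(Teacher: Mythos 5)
The paper does not prove this proposition---it is recalled as a classical result of Gross and used as a black box (the reference being \cite[Theorem 5.2.3]{BGL14analysis})---so there is no in-paper argument to compare against. Your proof is precisely that standard argument: differentiate $t\mapsto\norm{P_tf}_{L^{q(t)}(\nu)}$, use the generator plus integration by parts to reduce to the entropy--energy comparison, and read off both implications from the sign of the resulting expression; the computation, the ODE $q'=2(q-1)/C$ giving $q(t)-1=(p-1)e^{2t/C}$, and the converse via $F'(0)\le 0$ are all correct. The only implicit point worth flagging is that the identity $\int g^{q-1}Lg\,\D\nu=-\tfrac{4(q-1)}{q^2}\int\abs{\nabla g^{q/2}}^2\,\D\nu$ uses the chain rule for the carr\'e du champ, i.e.\ that $(P_t)$ is a \emph{diffusion} semigroup with $\Gamma(f)=\abs{\nabla f}^2$; this is consistent with how \eqref{eq:lsi} is stated in the paper, but "reversible Markov semigroup" alone would not suffice for your equality (only an inequality in the favorable direction for the forward implication).
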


It means that when a stationary measure satisfies \eqref{eq:lsi},
the semigroup $P_{t}$ not only contracts in any $L^{p}$-spaces but
also improves integrability (recall $L^{q}\subset L^{p}$ for $p<q$
due to monotonicity of $\norm{\cdot}_{L^{p}}$ in $p$). We now prove
Theorem~\ref{thm:hypercontractivity-heat-flow}, hypercontractivity
under the simultaneous heat flow with different $q(t)=1+(p-1)\,(1+\nicefrac{t}{\clsi(\nu)})$;
\[
\norm{Q_{t}f}_{L^{q(t)}(\nu_{t})}\leq\norm f_{L^{p}(\nu)}\quad\text{for all }p\in(1,\infty),t\geq0,\text{ and }f\in L^{p}(\nu)\,.
\]
Note that the reference measure $\nu_{t}$ \emph{also} evolves over
time $t$.
\begin{proof}
[Proof of Theorem~\ref{thm:hypercontractivity-heat-flow}]We first
prove the claim for $f\in C_{c,+}^{\infty}(\Rd):=\{f+c:f\in C_{c}^{\infty}(\Rd),f\geq0,c>0\}$.
Let $\bar{f}$ be the normalization of $f$ with respect to $\nu$
(i.e., $\bar{f}=f/\int f\,\D\nu$).

Denoting a new probability measure $\mu$ by $\D\mu=\bar{f}\,\D\nu$,
we define $\bar{f}_{t}:=Q_{t}\bar{f}=\D\mu_{t}/\D\nu_{t}$ and
\[
\mc F(t,q):=\int\bpar{\frac{\D\mu_{t}}{\D\nu_{t}}}^{q}\,\D\nu_{t}=\int\bar{f}_{t}^{q}\,\D\nu_{t}\,,\qquad\mc G(t):=\frac{1}{q(t)}\,\log\mc F\bpar{t,q(t)}\,.
\]
It then suffices to show that $\de_{t}\mc G\leq0$ for a suitable
choice of $q(t)$. Note that 
\[
\de_{t}\mc G=-\frac{q'(t)}{q^{2}(t)}\,\log\mc F+\frac{\de_{t}\mc F+q'(t)\,\de_{q}\mc F}{q(t)\,\mc F}\,.
\]
With assumptions of $q(t),q'(t)>0$ for a moment, $\de_{t}\mc G\leq0$
is equivalent to 
\[
\frac{1}{q'(t)}\,\de_{t}\mc F+\de_{q}\mc F-\frac{1}{q(t)}\,\mc F\log\mc F\leq0\,.
\]

We now compute $\de_{t}\mc F$ and $\de_{q}\mc F$. By general de
Bruijn's identity in Proposition~\ref{prop:KO-contraction} (note
that $\E_{\nu}[|\cdot|^{4}]<\infty$ since \eqref{eq:lsi} implies
\eqref{eq:pi}),
\[
\de_{t}\mc F=-\frac{q\,(q-1)}{2}\,\E_{\nu_{t}}\bbrack{\bpar{\frac{\D\mu_{t}}{\D\nu_{t}}}^{q-2}\,\babs{\nabla\frac{\D\mu_{t}}{\D\nu_{t}}}^{2}}=-\frac{q\,(q-1)}{2}\,\E_{\nu_{t}}[\bar{f}_{t}^{q-2}\,\abs{\nabla\bar{f}_{t}}^{2}]\,.
\]
As for $\de_{q}\mc F$, we have
\begin{align*}
\de_{q}\mc F & =\de_{q}\int\bar{f}_{t}^{q}\,\D\nu_{t}\underset{(i)}{=}\int\bar{f}_{t}^{q}\log\bar{f}_{t}\,\D\nu_{t}=\frac{1}{q}\,\Bpar{\ent_{\nu_{t}}(\bar{f}_{t}^{q})+\int\bar{f}_{t}^{q}\,\D\nu_{t}\,\log\int\bar{f}_{t}^{q}\,\D\nu_{t}}\\
 & \leq\frac{1}{q}\,\Bpar{2\clsi(\nu_{t})\,\E_{\nu_{t}}[\abs{\nabla(\bar{f}_{t}^{q/2})}^{2}]+\int\bar{f}_{t}^{q}\,\D\nu_{t}\,\log\int\bar{f}_{t}^{q}\,\D\nu_{t}}\\
 & =\frac{1}{q}\,\bpar{\frac{q^{2}\clsi(\nu_{t})}{2}\,\E_{\nu_{t}}[\bar{f}_{t}^{q-2}\,\abs{\nabla\bar{f}_{t}}^{2}]+\mc F\log\mc F}\,.
\end{align*}
Differentiation under the integral sign in $(i)$ can be justified
as follows: $\de_{q}\bar{f}_{t}^{q}=(Q_{t}\bar{f})^{q}\log Q_{t}\bar{f}$
is continuous in $q$, and we can take a small $\delta>0$ such that
for any $q'\in(q-\delta,q+\delta)$ with $q-\delta>1$,
\[
\bar{f}_{t}^{q'}\log\bar{f}_{t}=(\bar{f}_{t}\,\ind[\bar{f}_{t}\geq1]+\bar{f}_{t}\,\ind[\bar{f}_{t}<1])^{q'}\log\bar{f}_{t}\leq2^{q+\delta-1}\,(\bar{f}_{t}^{q+\delta}+\bar{f}_{t}^{q-\delta})\log\bar{f}_{t}\,.
\]
Hence, it amounts to showing integrability of the RHS. To this end,
we first show  that $\bar{f}_{t}\in L^{r}(\nu_{t})$ for any $r>1$.
By Jensen's inequality,
\[
\norm{\bar{f}_{t}}_{L^{r}(\nu_{t})}^{r}=\E_{\nu_{t}}[(Q_{t}\bar{f})^{r}]\leq\E_{\nu_{t}}[Q_{t}(\bar{f}^{r})]=\E_{\nu}[\bar{f}^{r}]=\norm{\bar{f}}_{L^{r}(\nu)}^{r}\,,
\]
and $\bar{f}\in C_{c,+}^{\infty}(\Rd)$ ensures that $\norm{\bar{f}}_{L^{r}(\nu)}<\infty$.
Thus, integrability of the RHS follows from $L^{s}\subset L^{r}\log L$
for any $r<s$.

Combining the bounds on $\de_{t}\mc F$ and $\de_{q}\mc F$,
\begin{align*}
 & \frac{1}{q'(t)}\,\de_{t}\mc F+\de_{q}\mc F-\frac{1}{q(t)}\,\mc F\log\mc F\\
 & \leq-\frac{q\,(q-1)}{2\,q'(t)}\,\E_{\nu_{t}}[\bar{f}_{t}^{q-2}\,\abs{\nabla\bar{f}_{t}}^{2}]+\frac{1}{q}\,\bpar{\frac{q^{2}\clsi(\nu_{t})}{2}\,\E_{\nu_{t}}[\bar{f}_{t}^{q-2}\,\abs{\nabla\bar{f}_{t}}^{2}]+\mc F\log\mc F}-\frac{1}{q(t)}\,\mc F\log\mc F\\
 & =\bpar{\clsi(\nu_{t})-\frac{q-1}{q'(t)}}\times\frac{q}{2}\,\E_{\nu_{t}}[\bar{f}_{t}^{q-2}\,\abs{\nabla\bar{f}_{t}}^{2}]\,.
\end{align*}
Hence, it suffices to enforce
\[
\frac{(q-1)'}{q-1}\leq\frac{1}{\clsi(\nu_{t})}\,.
\]
As $\clsi(\nu_{t})\leq\clsi(\nu)+t$, we can guarantee this by solving
\[
\frac{(q-1)'}{q-1}\leq\frac{1}{\clsi(\nu)+t}\,.
\]
Integrating both sides over $[0,t]$, and setting $q(0)=p$,
\[
\log\frac{q(t)-1}{p-1}\leq\log\frac{\clsi(\nu)+t}{\clsi(\nu)}\,,
\]
and thus we can simply take
\[
\frac{q(t)-1}{p-1}=1+\frac{t}{\clsi(\nu)}\,,
\]
which clearly satisfies $q,q'\geq0$. Thus, $\norm{\bar{f}_{t}}_{L^{q(t)}(\nu_{t})}\leq\norm{\bar{f}}_{L^{p}(\nu)}$
(so $\norm{f_{t}}_{L^{q(t)}(\nu_{t})}\leq\norm f_{L^{p}(\nu)}$),
and hypercontractivity holds on $C_{c,+}^{\infty}(\Rd)$.

Every $f\in C_{c,+}^{\infty}(\Rd)$ can be written as $f=f_{\delta}=g+\delta$
for some non-negative $g\in C_{c}^{\infty}(\Rd)$ and $\delta>0$.
We then apply the dominated convergence theorem to $\norm{Q_{t}f_{\delta}}_{L^{q(t)}(\nu_{t})}\leq\norm{f_{\delta}}_{L^{p}(\nu)}$
as $\delta\to0$, extending the main claim from $C_{c,+}^{\infty}(\Rd)$
to $C_{c,\geq0}^{\infty}(\Rd):=C_{c}^{\infty}(\Rd)\cap\{f\geq0\}$. 

Now, for any non-negative $f\in L^{p}(\nu)$, we can take a sequence
$\{f_{n}\}$ of functions in $C_{c,\geq0}^{\infty}(\Rd)$ such that
$f_{n}\to f$ in $L^{p}$. Moreover, we can take a subsequence $\{n_{k}\}_{k}$
such that $f_{n_{k}}\to f$ almost everywhere. Since $Q_{t}f_{n}\geq0$
due to positivity-preserving, Fatou's lemma implies that
\[
\norm{Q_{t}f}_{L^{q(t)}(\nu_{t})}^{q(t)}=\int\liminf_{k\to\infty}(Q_{t}f_{n_{k}})^{q(t)}\,\D\nu_{t}\leq\liminf_{k\to\infty}\int(Q_{t}f_{n_{k}})^{q(t)}\,\D\nu_{t}\,,
\]
and thus,
\[
\norm{Q_{t}f}_{L^{q(t)}(\nu_{t})}\leq\liminf_{k\to\infty}\,\norm{Q_{t}f_{n_{k}}}_{L^{q(t)}(\nu_{t})}\leq\liminf_{k\to\infty}\,\norm{f_{n_{k}}}_{L^{p}(\nu)}=\norm f_{L^{p}(\nu)}\,.
\]

Lastly, for any $f\in L^{p}(\nu)$, note that $\abs f=f_{+}+f_{-}$
is also in $L^{p}(\nu)$. Thus, 
\[
\norm{Q_{t}f}_{L^{q(t)}(\nu_{t})}=\norm{\abs{Q_{t}f}}_{L^{q(t)}(\nu_{t})}\leq\norm{Q_{t}(\abs f)}_{L^{q(t)}(\nu_{t})}\leq\norm{\abs f}_{L^{p}(\nu)}=\norm f_{L^{p}(\nu)}\,,
\]
which finishes the proof.
\end{proof}
For $f:=\D\mu/\D\nu\in L^{p}(\nu)$ (which is equivalent to $\eu R_{p}(\mu\mmid\nu)<\infty$),
hypercontractivity of $Q_{t}$ implies that
\[
\Bnorm{\frac{\D\mu_{t}}{\D\nu_{t}}}_{L^{q(t)}(\nu_{t})}=\norm{Q_{t}f}_{L^{q(t)}(\nu_{t})}\leq\norm f_{L^{p}(\nu)}=\Bnorm{\frac{\D\mu}{\D\nu}}_{L^{p}(\nu)}\,.
\]
Then, $\eu R_{q(t)}(\mu_{t}\mmid\nu_{t})\leq\eu R_{p}(\mu\mmid\nu)$
follows from
\[
\eu R_{p}(\mu\mmid\nu)=\frac{p}{p-1}\log\,\Bnorm{\frac{\D\mu}{\D\nu}}_{L^{p}(\nu)}\geq\frac{p}{p-1}\,\log\,\Bnorm{\frac{\D\mu_{t}}{\D\nu_{t}}}_{L^{q(t)}(\nu_{t})}\geq\frac{q(t)}{q(t)-1}\,\log\,\Bnorm{\frac{\D\mu_{t}}{\D\nu_{t}}}_{L^{q(t)}(\nu_{t})}\,.
\]
This looks similar to the DPI for R\'enyi divergence, but we remark
that the R\'enyi order of the RHS is larger that that of the LHS.

\paragraph{Improved convergence rate of PS under LSI.}

Just as in the SDPI under \eqref{eq:pi}, the SDPI under \eqref{eq:lsi}
was established in \cite{CCSW22improved,KO25strong}: for $q>1$ and
two probability measures $\mu,\nu$ with $\mu\ll\nu$,
\begin{equation}
\eu R_{q}(\mu_{t}\mmid\nu_{t})\leq\frac{\eu R_{q}(\mu\mmid\nu)}{(1+t/\clsi(\nu))^{1/q}}\,.\label{eq:PS-LSI}
\end{equation}
Combining this contraction with hypercontractivity, we can derive
a stronger mixing result of $\PS$: when $\pi_{k}$ denoting the law
of the $k$-th sample returned by $\PS$ with initial $\pi_{0}$ and
target $\pi$, for any given $\veps>0$ and $p,q>1$, we have  $\eu R_{q}(\pi_{N}\mmid\pi)\leq\veps$
after $N\lesssim h^{-1}\clsi(\pi)\log\frac{q\eu R_{2\wedge p}(\pi_{0}\mmid\pi)}{(p-1)\,\veps}$
iterations.
\begin{proof}
[Proof of Corollary~\ref{cor:PS-mixing-LSI}] We first iterate $\PS$
$N_{0}\asymp(2\wedge p)\,h^{-1}\clsi(\pi)\log\frac{\eu R_{(2\wedge p)}(\pi_{0}\mmid\pi)}{\veps}$
times to bring $\pi_{N_{0}}$ within $\veps$-distance to $\pi$ in
$\eu R_{2\wedge p}$. Next, we iterate it $N_{1}\asymp h^{-1}\clsi(\pi)\log_{2}\frac{q-1}{1\wedge(p-1)}$
more times, where we invoke DPI and hypercontractivity each iteration
as follows: for $i\leq N_{1}$ and $\pi_{i}^{X}=\pi_{i}$,
\[
\eu R_{q_{i+1}}(\pi_{N_{0}+i+1}\mmid\pi)\leq\eu R_{q_{i+1}}(\pi_{N_{0}+i+1}^{Y}\mmid\pi^{Y})\le\eu R_{q_{i}}(\pi_{N_{0}+i}\mmid\pi)\,,
\]
where $(q_{i+1}-1)/(q_{i}-1)=1+h/\clsi(\pi)$. Then,
\[
\frac{q_{N_{1}}-1}{1\wedge(p-1)}=\frac{q_{N_{1}}-1}{q_{0}-1}=\prod_{i=1}^{N_{1}}\frac{q_{i}-1}{q_{i-1}-1}=\bpar{1+\frac{h}{\clsi(\pi)}}^{N_{1}}\geq\frac{q-1}{1\wedge(p-1)}\,.
\]
Therefore, $\PS$ suffices to iterate $N=N_{0}+N_{1}\lesssim h^{-1}\clsi(\pi)\log\frac{q\eu R_{2\wedge p}(\pi_{0}\mmid\pi)}{(1\wedge(p-1))\,\veps}$
times to attain the desired guarantee.
\end{proof}
As a simple corollary, this result leads to a mixing time of $\PS$
in the well-conditioned setting, which improves the linear dependence
of $\PS$ on $q$ \cite{CCSW22improved}.
\begin{prop}
[Zeroth-order sampling in well-conditioned setting]\label{prop:mixing-PS-smooth}
Let $\pi\propto e^{-V}$ with $\alpha$-strongly convex and $\beta$-smooth
$V$ over $\Rd$, given access to an evaluation and proximal oracle
for $V$. Let $\pi_{k}$ denote the law of the $k$-th sample returned
by $\PS$. Given $q>1$ and $\veps>0$, we can attain $\eu R_{q}(\pi_{N}\mmid\pi)\leq\veps$,
using $\Otilde(\kappa d\log\frac{q}{\veps})$ queries to both oracles,
where $\kappa=\beta/\alpha$.
\end{prop}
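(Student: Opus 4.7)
The plan is to reduce Proposition~\ref{prop:mixing-PS-smooth} to Corollary~\ref{cor:PS-mixing-LSI} by plugging in three standard facts about the well-conditioned setting. First, since $V$ is $\alpha$-strongly convex, the Bakry--\'Emery criterion yields $\clsi(\pi)\leq\alpha^{-1}$, so LSI is available with a sharp constant. Second, $\beta$-smoothness of $V$ allows us to take the step size $h\asymp(\beta d)^{-1}$ while still implementing the backward step $x_{k+1}\sim\pi^{X|Y=y}$ by rejection sampling with $O(1)$ expected trials. This is the standard implementation from~\cite{LST21structured}: one proximal query locates the mode $x^\star(y)=\arg\min\{V(\cdot)+\frac{1}{2h}\,\snorm{\cdot-y}^{2}\}$, after which the proposal $\mc N(x^\star(y),hI_{d})$ yields $O(1)$ rejection probability and each trial costs one evaluation query.

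Third, I would select a warm initialization $\pi_{0}=\mc N(x^{*},\beta^{-1}I_{d})$, where $x^{*}=\arg\min V$ (computable to sufficient accuracy via one call to the proximal oracle at scale $\beta^{-1}$). Under $\alpha$-strong convexity and $\beta$-smoothness of $V$, a direct Gaussian calculation (standard in the literature; see e.g.~\cite{AC24faster,chewi25log}) gives $\eu R_{2}(\pi_{0}\mmid\pi)=\Otilde(d)$, and hence $\log\eu R_{2}(\pi_{0}\mmid\pi)=\Otilde(1)$.

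Plugging these into Corollary~\ref{cor:PS-mixing-LSI} with $p=2$ then yields
\[
N\;\lesssim\;h^{-1}\,\clsi(\pi)\,\log\frac{q\,\eu R_{2}(\pi_{0}\mmid\pi)}{\veps}\;\lesssim\;\kappa d\,\log\frac{qd}{\veps}\;=\;\Otilde\bpar{\kappa d\,\log\tfrac{q}{\veps}}\,,
\]
and since each iteration costs only $O(1)$ queries to each oracle, the total query complexity matches the claimed bound. None of the individual steps presents a serious obstacle here: the $\clsi$ bound is immediate from Bakry--\'Emery, the warm start is a routine Gaussian computation, and the per-iteration implementation is classical. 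The real content lies entirely in Corollary~\ref{cor:PS-mixing-LSI}, which—thanks to hypercontractivity under simultaneous heat flow—replaces the linear $q$-dependence in the previous $\PS$-under-LSI bound of~\cite{CCSW22improved} by $\log q$, yielding the stated improvement.
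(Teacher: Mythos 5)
Your proposal is correct and follows essentially the same route as the paper: bound $\clsi(\pi)\leq\alpha^{-1}$, use the Gaussian initialization centered at the minimizer (the paper invokes \cite[Lemma 18]{LST21structured}, which gives $\eu R_{\infty}(\pi_{0}\mmid\pi)\leq d\log\kappa$ for exactly the $\mc N(x^{*},\beta^{-1}I_{d})$ start you propose), apply Corollary~\ref{cor:PS-mixing-LSI} with $q_{0}=2$, and implement the backward step with step size $h\asymp(\beta d)^{-1}$ using $\O(1)$ evaluation queries and one proximal query per iteration as in \cite{LST21structured}. The only cosmetic difference is that you bound $\eu R_{2}$ directly rather than passing through $\eu R_{\infty}\geq\eu R_{2}$, which changes nothing in the final count.
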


\begin{proof}
One can find an initial distribution $\pi_{0}$ such that $\eu R_{\infty}(\pi_{0}\mmid\pi)\leq d\log\kappa$
(see \cite[Lemma 18]{LST21structured}). Using $\eu R_{2}\leq\eu R_{\infty}$,
$\clsi(\pi)\leq\alpha^{-1}$, and Corollary~\ref{cor:PS-mixing-LSI}
with $q_{0}=2$, we can conclude that $\PS$ suffices to iterate $\Otilde(\kappa d\log\frac{q}{\veps})$
times. It is also known from \cite{LST21structured} that the backward
step can be implemented via rejection sampling for step size $h=\nicefrac{1}{\beta d}$,
using $\O(1)$ many evaluation queries (in expectation) and one proximal
query per iteration. Combining these together finishes the proof.
\end{proof}

\section{Order-preserving annealing via simpler warm-start generation\label{sec:Unif-warmstart}}

In this section, we streamline prior warm-start generation algorithms
and their proofs. Above all, we interweave an overall analysis with
hypercontractivity under the heat flow, eventually establishing the
complexity of generating $\eu R_{q}$-warmness.

\paragraph{Simpler warm-start generation.}

Let $\K$ be a convex body with $B_{1}(0)\subset\K$ (by translation),
$\E_{\pi}[\abs{\cdot}^{2}]\leq R^{2}$, and $\Lambda=\norm{\cov\pi}$.
We simplify prior annealing approaches in \cite{KV25faster}, moving
along a sequence $\{\mu_{i}\}_{i\in[m]}$ of distributions with $\mu_{i}=\gamma_{\sigma_{i}^{2}}|_{\K}=\pi\gamma_{\sigma_{i}^{2}}$.

This prior algorithm consists of the following several steps: (i)
\emph{Preprocessing}: Truncate $\K$ to the ball of radius $\O(R)$
such that $\pi(\bar{\K})\geq0.99$ where $\bar{\K}:=\K\cap B_{\O(R)}(0)$,
(ii) \emph{Initialization}: Sample from $\mu_{1}=\gamma_{d^{-1}}|_{\bar{\K}}$
by rejection sampling with proposal $\gamma_{d^{-1}}$, (iii) \emph{Annealing}:
While $\sigma_{i}^{2}\lesssim R^{2}$, use $\PS$ to sample from $\mu_{i+1}$
with initial distribution $\mu_{i}$, where $\sigma_{i+1}^{2}=\sigma_{i}^{2}\,(1+\frac{\sigma_{i}}{q^{1/2}R})$
for $q=\polylog\nicefrac{dR}{\veps}$. In particular, there are two
modes: (iii-1) \emph{normal mode}: when $\sigma_{i}^{2}\lesssim qR\Lambda^{1/2}$,
iterate $\PS$ $\Otilde(d^{2}\sigma_{i}^{2})$ times, (iii-2) \emph{faster
mode}: when $\sigma_{i}^{2}\gtrsim qR\Lambda^{1/2}$, iterate $\PS$
$\Otilde(d^{2}R\Lambda^{1/2})$ times. (iv) \emph{Termination}: Run
$\PS$ to sample from $\pi$ with initial distribution $\gamma_{R^{2}}|_{\bar{\K}}$.

We show that the preprocessing step is redundant and also that the
faster mode can be replaced by early-stopping: using $\psgauss$ (\S\ref{subsec:PS-gauss})
below,
\begin{enumerate}
\item Sample from $\mu_{1}=\pi\gamma_{d^{-1}}$ by rejection sampling with
proposal $\gamma_{d^{-1}}$, and set $\bar{\mu}_{1}:=\mu_{1}$.
\item While $\sigma_{i}^{2}\lesssim qR\Lambda^{1/2}\log^{1/2}d$, move from
$\bar{\mu}_{i}$ to $\bar{\mu}_{i+1}$ by $\psgauss$ such that $\eu R_{\Otilde(1)}(\bar{\mu}_{i+1}\mmid\mu_{i+1})=O(1)$,
where 
\[
\sigma_{i+1}^{2}=\sigma_{i}^{2}\,\bpar{1+\frac{\sigma_{i}}{R}}\,.
\]
\end{enumerate}
We need two ingredients to obtain a final guarantee: (1) the query
complexity of $\PS$ for truncated Gaussians ($\psgauss$) and (2)
closeness of two annealing distributions. The former was already studied
in previous work, but we will refine the analysis to obtain an \emph{order-boosting}
guarantee via hypercontractivity in \S\ref{subsec:PS-gauss}. The
latter was proven for $\chi^{2}$-divergence by a localization argument
\cite{CV18Gaussian} and then was extended to $\eu R_{q}$ in \cite{KV25faster}.
We show in \S\ref{subsec:FI-to-closeness} that this is in fact a
simple consequence of \eqref{eq:lsi}. Combining these together, we
establish the query complexity of generating $\eu R_{q}$-warmness.

\subsection{Proximal sampler for truncated Gaussian\label{subsec:PS-gauss}}

We study $\PS$ for $\mu^{X}:=\pi\gamma_{\sigma^{2}}$, Gaussian distributions
truncated to a convex body $\K$ containing $B_{1}(0)$. Its prototype
was already studied in \cite{KZ25Renyi,KV25faster}, and $\psgauss$
can be readily deduced from the general framework of $\PS$.

Starting with $x_{0}\sim\mu_{0}^{X}$, step size $h>0$, threshold
$\tau$, and $s:=\sigma^{2}/(h+\sigma^{2})\in(0,1)$,
\begin{itemize}
\item {[}Forward{]} $y_{k+1}\sim\mu^{Y|X=x_{k}}=\gamma_{h}(\cdot-x_{k})$.
\item {[}Backward{]} $x_{k+1}\sim\mu^{X|Y=y_{k+1}}\propto\gamma_{sh}(\cdot-sy_{k+1})\,\ind_{\K}(\cdot)$.
To this end, draw $x_{k+1}\sim\mc N(sy_{k+1},shI_{d})$ repeatedly
until $x_{k+1}\in\K$. If this rejection loop exceeds $\tau$ trials,
declare failure and restart from scratch with a new sample $x_{0}\sim\mu_{0}^{X}$.
\end{itemize}
As in the uniform-sampling case, its analysis is split into two parts:
a mixing analysis and per-iteration complexity. The failure probability
and per-iteration analysis will be proceeded in a similar fashion,
but the mixing analysis can be improved due to the hypercontractivity
of $\PS$.

\paragraph{Mixing analysis.}

We already established the convergence rate of $\PS$ under \eqref{eq:lsi}
(see Corollary~\ref{cor:PS-mixing-LSI}). It is classical that $\clsi(\mu^{X})\leq\sigma^{2}$
(e.g., use the Bakry--\'Emery criterion, and the convex boundary
term does not harm in satisfying this criterion; see \cite{Klartag09Berry,BJ23spectral}).
Thus, in order to be $\veps$-close to $\mu^{X}$ in $\eu R_{q}$,
for $q_{0}>1$, $\psgauss$ suffices to iterate 
\[
N\lesssim h^{-1}\sigma^{2}\log\frac{q\eu R_{2\wedge q_{0}}(\mu_{0}^{X}\mmid\mu^{X})}{(2\wedge q_{0}-1)\,\veps}\qquad\text{iterations}\,.
\]

\paragraph{Per-step analysis.}

The analysis is analogous to \S\ref{subsec:uniform-query-complexity},
with $\pi_{k}^{Y}$ replaced by $\mu_{k}^{Y}$ and \S2.1 of \cite{KV25faster}
replaced by \S2.2. With the parameter choices $Z=\frac{16NM_{2}}{\eta}$,
$c=\frac{\log\log Z}{10\log Z}$, $h=\frac{c}{d^{2}}$, $\tau=Z^{2}\log^{3}Z$,
the failure probability satisfies $\E_{\mu_{k}^{Y}}[(1-\ell)^{\tau}]\leq\frac{\eta}{N}$,
and enforcing $q\geq1+(1\vee\log\tau)$ yields $\E_{\mu_{k}^{Y}}[\frac{1}{\ell}\wedge\tau]\leq5M_{q_{0}}\log^{4}Z$.
\begin{prop}
\label{prop:gauss-sampling} Let $\K\subset\Rd$ be a convex body
with $B_{1}(0)\subset\K$, given access to a membership oracle for
$\K$, and $\mu\propto\gamma_{\sigma^{2}}\cdot\ind_{\K}$ be the Gaussian
truncated to $\K$. Given $\veps>0$ , $\eta\in(0,1/2)$, $q\geq q_{0}\geq2$,
and an initial distribution $\mu_{0}$ with $M_{q_{0}}=\norm{\nicefrac{\D\mu_{0}}{\D\mu}}_{L^{q_{0}}(\mu)}$,
initialize $\psgauss$ with $\mu_{0}$ and iterate it $N=\Otilde(d^{2}\sigma^{2}\log\frac{q}{\veps}\log\frac{1}{\eta})$
times with $h=(10d^{2}\log Z)^{-1}$ and $\tau=Z^{2}\log^{3}Z$ for
$Z=\frac{16NM_{2}}{\eta}$. If $q_{0}\geq1+\log\tau$ and $M_{q_{0}}\leq10$,
then
\begin{itemize}
\item The sufficient condition on $q_{0}$ is satisfied when $q_{0}\geq2\vee O(\log\frac{N}{\eta})$.
\item With probability at least $1-\eta$, $\psgauss$ iterates $N$ times
without failure. Conditioned on this success, the output $Z\sim\nu$
satisfies $\eu R_{q}(\nu\mmid\mu)\leq\veps+2\log\frac{1}{1-\eta}$,
with $\Otilde(d^{2}\sigma^{2}\log\frac{q}{\veps}\log^{5}\frac{1}{\eta})$
membership queries in expectation.
\end{itemize}
\end{prop}

\begin{proof}
For $q\geq q_{0}\geq2$ and $(M_{2}\leq)M_{q_{0}}\leq10$, the uncapped
$\psgauss$ (with the choice of $h=cd^{-2}=\frac{\log\log Z}{10d^{2}\log Z}$
and $Z\asymp NM_{2}/\eta$) returns a sample $X_{N}$ such that $\eu R_{q}(\law X_{N}\mmid\mu^{X})\leq\veps$
after iterating
\[
h^{-1}\sigma^{2}\log\frac{q\log M_{2}}{\veps}\quad\text{times}\,.
\]
Repeating the similar argument as in the uniform sampling case, the
required number of iterations is of order
\begin{equation}
N=\Otilde\bpar{d^{2}\sigma^{2}\log\frac{q}{\veps}\log\frac{1}{\eta}}\,.\label{eq:psgauss-iter-comp}
\end{equation}

A sufficient condition for ensuring $q_{0}\geq2\vee\log\tau$ is simply
\begin{equation}
1+\log\tau\lesssim\log\frac{N}{\eta}\lesssim\log\Otilde\bpar{\frac{d^{2}\sigma^{2}}{\eta}\log\frac{q}{\veps}\log\frac{1}{\eta}}\,,\label{eq:recursive-ineq-Gaussian}
\end{equation}
Under this condition, the per-step complexity is $O(\log^{4}Z)$,
so the expected number of membership queries for $\psgauss:\eu R_{q_{0}}\to\eu R_{q}$
in one attempt is 
\[
\Otilde\bpar{d^{2}\sigma^{2}\log\frac{q}{\veps}\log^{5}\frac{1}{\eta}}\,,
\]
and the success probability satisfies $\P(S)\geq\eta$. As in the
analysis of $\psunif$, the conditional law of the final output is
distributed as $\hat{\mu}_{N}^{X}$ satisfying $\eu R_{q}(\nu\mmid\mu)\leq\veps+2\log\frac{1}{1-\eta}$.
\end{proof}
Combining all these together, we can prove Theorem~\ref{thm:gauss-warm}
on the query complexity of $\psgauss$.
\begin{proof}
[Proof of Theorem~\ref{thm:gauss-warm}] Take $\eta=\veps<0.01$,
and restart the algorithm upon failure. To invoke the proposition
above, $q_{0}$ should satisfy $q_{0}\geq2\vee O(\log N)$ for $N=\Otilde(d^{2}\sigma^{2}\log\frac{q}{\veps}\log\frac{1}{\veps})$.
To ensure $q_{0}\geq\log\Otilde(d^{2}\sigma^{2}\log\frac{q}{\veps}\log\frac{1}{\veps})(\geq\log N)$,
it suffices to enforce 
\[
q_{0}\geq2\vee\Otilde\bpar{\log d^{2}\sigma^{2}+\log\log\frac{q}{\veps}}\,,
\]
so we will set $q_{0}$ to this as a required base order. We then
restrict to $q\geq2\vee\Otilde(\log d^{2}\sigma^{2}+\log\log\frac{1}{\veps})$
so that $q\geq q_{0}$ holds. Therefore, by Proposition~\ref{prop:gauss-sampling},
the restart version of $\psgauss$ uses $\Otilde(d^{2}\sigma^{2}\log\frac{q}{\veps}\log^{5}\frac{1}{\veps})$
 total queries in expectation, and its output is distributed as $\hat{\mu}_{N}^{X}$
satisfying $\eu R_{q}(\hat{\mu}_{N}^{X}\mmid\pi^{X})\leq3\veps$.
Retaking $\veps\gets\veps/3$, we complete the proof.
\end{proof}

\subsection{Functional inequalities to closeness and early-stopping\label{subsec:FI-to-closeness}}

We will show that two consecutive Gaussians are $\O(1)$-close in
$\eu R_{q}$, finding a connection to \eqref{eq:lsi}. We then show
that annealing can be stopped around $\sigma_{i}^{2}\asymp qR\Lambda^{1/2}\log^{1/2}d$,
much earlier than $\sigma_{i}^{2}\asymp R^{2}$, finding a connection
to \eqref{eq:pi}. Moreover, the second result improves a mathematical
result established in \cite{KV25faster} as an immediate corollary.
We then prove Theorem~\ref{thm:complexity-Renyi-warm} on the query
complexity of our annealing algorithm.

\paragraph{Mollification.}

To avoid any regularity issues, we work with mollifications of logconcave
distributions. As detailed in \cite[\S{C.4}]{evans10partial}, the
\emph{standard mollifier} $\eta\in C^{\infty}(\Rd)$ is defined as
\begin{equation}
\eta(x):=\begin{cases}
C\exp\bpar{\frac{1}{\norm x^{2}-1}} & \norm x\leq1\,,\\
0 & \norm x>1\,,
\end{cases}\label{eq:mollifier}
\end{equation}
where the constant $C$ is chosen to ensure $\int\eta=1$. For $\veps>0$,
one can consider
\[
\eta_{\veps}(x):=\frac{1}{\veps^{d}}\,\eta\bpar{\frac{x}{\veps}}\,,
\]
whose support is bounded by $B_{\veps}(0)$. Then, for locally integrable
$f:\Rd\to\R$, its mollification $f_{\veps}:=f*\eta_{\veps}$ satisfies
$f_{\veps}\in C^{\infty}(\Rd)$ and $f_{\veps}\to f$ almost everywhere
as $\veps\to0$.

\paragraph{Streamlined analysis.}

Recall Scheff\'e's lemma that if a sequence $\pi_{n}$ of probability
densities converges to a probability measure $\pi$ almost everywhere,
then $\pi_{n}$ converges to $\pi$ in $\tv$-distance (thus weakly).
Hence, $\pi_{t}:=\pi*\eta_{t}\to\pi$ weakly as $t\to0$ due to the
property of the mollifier. Then, it is a property of weak convergence
that $\int\gamma_{\sigma^{2}}\,\D\pi_{t}\to\int\gamma_{\sigma^{2}}\,\D\pi$
for any Gaussian density $\gamma_{\sigma^{2}}$. Therefore, probability
measures $\pi_{t}\gamma_{\sigma^{2}}$ converge to $\pi\gamma_{\sigma^{2}}$
almost surely (and thus weakly).

A R\'enyi version of LSI was established in \cite[Lemma 5]{VW23rapid}
that for $q>1$ and smooth $\mu/\nu$,
\begin{equation}
\eu R_{q}(\mu\mmid\nu)\leq\frac{q\clsi(\nu)}{2}\,\msf{RFI}_{q}(\mu\mmid\nu)\,,\tag{\ensuremath{\msf{R\text{-}LSI}}}\label{eq:Renyi-LSI}
\end{equation}
where $(\nicefrac{\mu}{\nu})^{q}\log\nicefrac{\mu}{\nu}$ is continuous,
locally uniform, and integrable. 
\begin{lem}
[Closeness] \label{lem:closeness-annealing} Let $\pi$ be a logconcave
probability measure over $\Rd$ with $\E_{\pi}[\abs{\cdot}^{2}]=R^{2}$.
For $\alpha,\sigma^{2}>0$ and $q>1$, probability measures $\pi\gamma_{\sigma^{2}}$
and $\pi\gamma_{\sigma^{2}(1+\alpha)}$ satisfy
\[
\eu R_{q}(\pi\gamma_{\sigma^{2}}\mmid\pi\gamma_{\sigma^{2}(1+\alpha)})\leq\frac{q^{2}\alpha^{2}R^{2}}{2\sigma^{2}}\,.
\]
In particular, they are $\O(1)$-close in $\eu R_{q}$ if $\alpha\leq\nicefrac{\sigma}{qR}$.
\end{lem}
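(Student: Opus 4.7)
My plan is to apply \eqref{eq:Renyi-LSI} to $\mu:=\pi\gamma_{\sigma^{2}}$ and $\nu:=\pi\gamma_{\sigma^{2}(1+\alpha)}$. Since $-\log\nu=-\log\pi+\frac{\abs{\cdot}^{2}}{2\sigma^{2}(1+\alpha)}+\text{const}$ has Hessian $\succeq(\sigma^{2}(1+\alpha))^{-1}I$ by log-concavity of $\pi$, Bakry--\'Emery (which extends to convex supports, as already invoked in \S\ref{subsec:PS-gauss}) gives $\clsi(\nu)\leq\sigma^{2}(1+\alpha)$. A direct computation of the ratio of Gaussian densities then yields
\[
\log\frac{\D\mu}{\D\nu}=\text{const}-\frac{\alpha\abs x^{2}}{2\sigma^{2}(1+\alpha)},\qquad\babs{\nabla\log\tfrac{\D\mu}{\D\nu}}^{2}=\frac{\alpha^{2}\abs x^{2}}{\sigma^{4}(1+\alpha)^{2}}.
\]

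The central step is evaluating $\msf{RFI}_{q}(\mu\mmid\nu)$. I observe that the tilted measure $\tilde{\nu}\propto(\nicefrac{\D\mu}{\D\nu})^{q}\,\nu$ has density proportional to $\pi\gamma_{\tau^{2}}$ with $\tau^{2}:=\sigma^{2}(1+\alpha)/(1+q\alpha)$, since the Gaussian tilt from $(\nicefrac{\D\mu}{\D\nu})^{q}$ merges with $\gamma_{\sigma^{2}(1+\alpha)}$ into a single Gaussian of smaller variance. Hence
\[
\msf{RFI}_{q}(\mu\mmid\nu)=q\,\E_{\tilde{\nu}}[\babs{\nabla\log\tfrac{\D\mu}{\D\nu}}^{2}]=\frac{q\alpha^{2}}{\sigma^{4}(1+\alpha)^{2}}\,\E_{\tilde{\nu}}[\abs x^{2}].
\]
To bound $\E_{\tilde{\nu}}[\abs x^{2}]=\E_{\pi}[\abs X^{2}\gamma_{\tau^{2}}(X)]/\E_{\pi}[\gamma_{\tau^{2}}(X)]$, I would apply Chebyshev's sum inequality to the pair of scalar functions $r\mapsto r^{2}$ (nondecreasing) and $r\mapsto e^{-r^{2}/(2\tau^{2})}$ (nonincreasing) of $r=\abs x$, which gives $\E_{\pi}[\abs X^{2}\gamma_{\tau^{2}}(X)]\leq\E_{\pi}[\abs X^{2}]\cdot\E_{\pi}[\gamma_{\tau^{2}}(X)]$ and hence $\E_{\tilde{\nu}}[\abs x^{2}]\leq R^{2}$. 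Substituting into \eqref{eq:Renyi-LSI},
\[
\eu R_{q}(\mu\mmid\nu)\leq\frac{q\sigma^{2}(1+\alpha)}{2}\cdot\frac{q\alpha^{2}R^{2}}{\sigma^{4}(1+\alpha)^{2}}=\frac{q^{2}\alpha^{2}R^{2}}{2\sigma^{2}(1+\alpha)}\leq\frac{q^{2}\alpha^{2}R^{2}}{2\sigma^{2}}.
\]

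The main obstacle is the regularity hypothesis required by \eqref{eq:Renyi-LSI}, which may fail when $\pi$ is only log-concave (e.g., a convex-body indicator). To address this, I would follow the mollification strategy of \S\ref{subsec:FI-to-closeness}: replace $\pi$ with $\pi_{\veps}:=\pi*\eta_{\veps}$, which is smooth and still log-concave by Pr\'ekopa--Leindler, and carry out the argument above for $\mu_{\veps}:=\pi_{\veps}\gamma_{\sigma^{2}}$ and $\nu_{\veps}:=\pi_{\veps}\gamma_{\sigma^{2}(1+\alpha)}$. The Bakry--\'Emery bound $\clsi(\nu_{\veps})\leq\sigma^{2}(1+\alpha)$ is uniform in $\veps$, and the Chebyshev step uses only the monotonicity of $\abs x^{2}$ and $\gamma_{\tau^{2}}$ in $\abs x$, so it survives mollification with $\E_{\pi_{\veps}}[\abs{\cdot}^{2}]\leq R^{2}+\veps^{2}$. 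Passing to the limit $\eu R_{q}(\mu_{\veps}\mmid\nu_{\veps})\to\eu R_{q}(\mu\mmid\nu)$ is then carried out via the weak convergence $\mu_{\veps}\to\mu$, $\nu_{\veps}\to\nu$ noted earlier in the section, combined with Fatou's lemma applied to $\int(\nicefrac{\D\mu_{\veps}}{\D\nu_{\veps}})^{q}\,\D\nu_{\veps}$.
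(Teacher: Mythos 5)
Your proposal is correct and follows essentially the same route as the paper's proof: apply the R\'enyi LSI with $\clsi\leq\sigma^{2}(1+\alpha)$, observe that the tilted measure $(\nicefrac{\D\mu}{\D\nu})^{q}\nu$ is again of the form $\pi\gamma_{\tau^{2}}$ so that $\msf{RFI}_{q}$ reduces to a second moment under a Gaussian reweighting, bound that moment by $R^{2}$ via the negative-correlation (Chebyshev/covariance) argument, and handle regularity by mollifying $\pi$ and passing to the limit by lower semi-continuity of $\eu R_{q}$ under weak convergence. The only differences are cosmetic (you make the merged variance $\tau^{2}=\sigma^{2}(1+\alpha)/(1+q\alpha)$ explicit where the paper leaves it as an unspecified $\delta$, and you invoke Fatou where the paper cites a lower semi-continuity theorem).
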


\begin{proof}
Let $\mu\propto\pi\gamma_{\sigma^{2}}$ and $\nu\propto\pi\gamma_{\sigma^{2}(1+\alpha)}$,
which clearly satisfies $\mu\ll\nu$. For $t>0$, consider mollified
probability measures $\mu_{t}\propto\pi_{t}\gamma_{\sigma^{2}}$ and
$\nu_{t}\propto\pi_{t}\gamma_{\sigma^{2}(1+\alpha)}$. On $\supp\nu_{t}$,
\[
\nabla\log\frac{\D\mu_{t}}{\D\nu_{t}}=\nabla\log\frac{\gamma_{\sigma^{2}}}{\gamma_{\sigma^{2}(1+\alpha)}}=\frac{\alpha x}{(1+\alpha)\,\sigma^{2}}\,,
\]
which admits a smooth extension (as $\frac{\alpha x}{(1+\alpha)\sigma^{2}}$)
to $\Rd$. Then, for some $\delta>0$ independent of $t$,
\begin{equation}
\msf{RFI}_{q}(\mu_{t}\mmid\nu_{t})=q\,\frac{\E_{\nu_{t}}[(\frac{\D\mu_{t}}{\D\nu_{t}})^{q}\,\abs{\nabla\log\frac{\D\mu_{t}}{\D\nu_{t}}}^{2}]}{\E_{\nu_{t}}[(\frac{\D\mu_{t}}{\D\nu_{t}})^{q}]}=\frac{q\alpha^{2}}{\sigma^{4}\,(1+\alpha)^{2}}\,\frac{\E_{\nu_{t}}[(\frac{\D\mu_{t}}{\D\nu_{t}})^{q}\,\abs{\cdot}^{2}]}{\E_{\nu_{t}}[(\frac{\D\mu_{t}}{\D\nu_{t}})^{q}]}=\frac{q\alpha^{2}}{\sigma^{4}\,(1+\alpha)^{2}}\,\frac{\E_{\pi_{t}}[\gamma_{\delta}\,\abs{\cdot}^{2}]}{\E_{\pi_{t}}\gamma_{\delta}}\,.\label{eq:RFI-bound}
\end{equation}
Using \eqref{eq:Renyi-LSI} with $\clsi(\nu_{t})\leq\sigma^{2}\,(1+\alpha)$,
\[
\eu R_{q}(\mu_{t}\mmid\nu_{t})\leq\frac{q\clsi(\nu_{t})}{2}\,\msf{RFI}_{q}(\mu_{t}\mmid\nu_{t})\leq\frac{q^{2}\alpha^{2}}{2\sigma^{2}}\,\frac{\E_{\pi_{t}}[\gamma_{\delta}\,\abs{\cdot}^{2}]}{\E_{\pi_{t}}\gamma_{\delta}}\,.
\]
As noted earlier, $\mu_{t}\to\mu$ and $\nu_{t}\to\nu$ weakly. By
\cite[Theorem 2.34]{AFP00functions}, the lower semi-continuity of
$\eu R_{q}$ implies that $\eu R_{q}(\mu\mmid\nu)\leq\lim_{t\downarrow0}\eu R_{q}(\mu_{t}\mmid\nu_{t})$.
Also, $\lim_{t\downarrow0}\E_{\pi_{t}}[\gamma_{\delta}\,\abs{\cdot}^{2}]\,/\,\E_{\pi_{t}}\gamma_{\delta}=\E_{\pi}[\gamma_{\delta}\,\abs{\cdot}^{2}]\,/\,\E_{\pi}\gamma_{\delta}$
due to the weak convergence of $\pi_{t}\to\pi$. Thus,
\[
\eu R_{q}(\mu\mmid\nu)\leq\frac{q^{2}\alpha^{2}}{2\sigma^{2}}\,\frac{\E_{\pi}[\gamma_{\delta}\,\abs{\cdot}^{2}]}{\E_{\pi}\gamma_{\delta}}\,.
\]

We now need a general fact that for any integrable function $\rho:\Rd\to\R_{+}$,
\begin{equation}
\frac{\int\abs{\cdot}^{2}\,\gamma_{\delta}\,\D\rho\,}{\int\gamma_{\delta}\,\D\rho}\leq\frac{\int\abs{\cdot}^{2}\,\D\rho}{\int\D\rho}=\E_{\rho}[\abs{\cdot}^{2}]\,.\label{eq:cov-decrease}
\end{equation}
Observe that $\cov_{\rho}(f,g)=\half\,\E_{(X,Y)\sim\rho^{\otimes2}}[\{f(X)-f(Y)\}\,\{g(X)-g(Y)\}]$.
As $f(\cdot)=\abs{\cdot}^{2}$ increases in $\abs{\cdot}$ while $g=\gamma_{\delta}$
decreases, $0\geq\cov_{\rho}(f,g)=\E_{\rho}[fg]-\E_{\rho}f\,\E_{\rho}g$,
which proves the claim. Using this fact with $\rho=\pi$, we complete
the proof of the main claim.
\end{proof}

\begin{lem}
[Early-stopping]\label{lem:early-stopping} Let $\pi$ be a logconcave
probability measure over $\Rd$ with $\E_{\pi}[\abs{\cdot}^{2}]=R^{2}$.
For $q\geq2$, if $\sigma^{2}\gtrsim qR\,\norm{\cov\pi}^{1/2}\log^{1/2}d$,
then $\eu R_{q}(\pi\gamma_{\sigma^{2}}\mmid\pi)=\O(1)$.
\end{lem}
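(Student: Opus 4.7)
The plan is to apply \eqref{eq:Renyi-PI} to $\mu := \pi\gamma_{\sigma^2}/Z$ with $Z := \int \gamma_{\sigma^2}\,\D\pi$, against the log-concave reference $\pi$, whose Poincar\'e constant satisfies Klartag's bound $\cpi(\pi) \lesssim \norm{\cov\pi}\log d$. This reduces the statement to an upper bound on $\msf{RFI}_q(\mu\mmid\pi)$ that is small enough to force the right-hand side of \eqref{eq:Renyi-PI} to stay bounded away from $1$.

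\textbf{Computing the RFI.} On $\supp \pi$, the density $\D\mu/\D\pi \propto \gamma_{\sigma^2}$ satisfies $\nabla\log\D\mu/\D\pi = -x/\sigma^2$, so by direct substitution
\eq{
\msf{RFI}_q(\mu\mmid\pi) \;=\; \frac{q}{\sigma^4}\,\frac{\E_\pi\bbrack{\gamma_{\sigma^2}^q\,\abs{\cdot}^2}}{\E_\pi[\gamma_{\sigma^2}^q]}.
}
The key observation is that $\gamma_{\sigma^2}(x)^q \propto \gamma_{\sigma^2/q}(x)$, so the proportionality constants cancel between numerator and denominator, reducing the ratio to $\E_\pi[\gamma_{\sigma^2/q}\,\abs{\cdot}^2]/\E_\pi[\gamma_{\sigma^2/q}]$. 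I would then invoke the very same FKG-type covariance argument that closes the proof of Lemma~\ref{lem:closeness-annealing}: since $\abs{\cdot}^2$ is non-decreasing and $\gamma_{\sigma^2/q}$ is non-increasing as functions of $\abs{\cdot}$, their covariance under $\pi$ is non-positive, giving the ratio $\leq \E_\pi\bbrack{\abs{\cdot}^2} = R^2$. Hence $\msf{RFI}_q(\mu\mmid\pi) \leq qR^2/\sigma^4$.

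\textbf{Conclusion and main obstacle.} Plugging this into \eqref{eq:Renyi-PI} together with $\cpi(\pi) \lesssim \norm{\cov\pi}\log d$ yields
\eqn{
1 - \exp\bpar{-\eu R_q(\mu\mmid\pi)} \;\lesssim\; \frac{q^2 R^2\,\norm{\cov\pi}\log d}{\sigma^4}.
}
Under the hypothesis $\sigma^2 \gtrsim qR\,\norm{\cov\pi}^{1/2}\log^{1/2}d$ with a sufficiently large implied constant, the right-hand side is at most $\tfrac{1}{2}$, giving $\eu R_q(\mu\mmid\pi) \leq \log 2 = \O(1)$. The only delicate point is the smoothness hypothesis underlying \eqref{eq:Renyi-PI}, which need not hold for $\D\mu/\D\pi$ if $\pi$ is merely log-concave on a convex body. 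I would handle this exactly as in Lemma~\ref{lem:closeness-annealing}: mollify $\pi$ by $\pi_t := \pi * \eta_t$ so that $\pi_t$ and $\mu_t \propto \pi_t\gamma_{\sigma^2}$ are smooth and positive with smooth density ratio, run the argument above for $\eu R_q(\mu_t\mmid\pi_t)$ (noting that $\pi_t$ remains log-concave by Pr\'ekopa--Leindler and that $\norm{\cov\pi_t}\to\norm{\cov\pi}$ and $\E_{\pi_t}[\abs{\cdot}^2]\to R^2$ as $t\downarrow 0$), and then pass to the limit via weak convergence $\mu_t\to\mu$, $\pi_t\to\pi$, and lower semi-continuity of $\eu R_q$.
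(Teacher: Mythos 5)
Your proposal is correct and follows essentially the same route as the paper's proof: apply the R\'enyi--Poincar\'e inequality \eqref{eq:Renyi-PI} to the mollified pair, compute $\msf{RFI}_q$ explicitly and reduce the weighted ratio to $R^2$ via the same FKG covariance argument used in Lemma~\ref{lem:closeness-annealing}, invoke $\cpi\lesssim\norm{\cov\pi}\log d$, and pass to the limit by weak convergence and lower semicontinuity of $\eu R_q$. The only cosmetic difference is that you control $\cpi(\pi_t)$ by applying Klartag's bound directly to the (still log-concave) mollification together with $\norm{\cov\pi_t}\to\norm{\cov\pi}$, whereas the paper invokes a $\tv$-stability result for the Poincar\'e constant; both are valid.
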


\begin{proof}
Let $\mu\propto\pi\gamma_{\sigma^{2}}$ be the probability measure,
and $\mu_{t}$ and $\pi_{t}$ the mollifications of $\mu$ and $\pi$.
Recall from \eqref{eq:Renyi-PI} that
\begin{equation}
\eu R_{q}(\mu_{t}\mmid\pi_{t})\leq\log\Bpar{1-\frac{q\cpi(\pi_{t})}{4}\,\msf{RFI}_{q}(\mu_{t}\mmid\pi_{t})}^{-1}\,.\label{eq:RPI-log}
\end{equation}
By \cite[\S5]{milman09role} or \cite[Theorem 3.13]{CG20poincare},
if two logconcave probability measures $\nu_{1}$ and $\nu_{2}$ satisfy
$\norm{\nu_{1}-\nu_{2}}_{\tv}\leq\veps$ for $\veps\in(0,1)$, then
\[
\cpi(\nu_{1})\lesssim\frac{1\vee\log\frac{1}{1-\veps}}{(1-\veps)^{2}}\,\cpi(\nu_{2})\,.
\]
Since $\pi_{t}\to\pi$ in $\tv$-distance (by Scheff\'e), and $\pi_{t}$
is logconcave (by Pr\'ekopa--Leindler), we have $\limsup_{t\downarrow0}\cpi(\pi_{t})\lesssim\cpi(\pi)$.
Next, emulating the proof of Lemma~\ref{lem:closeness-annealing}
with $\alpha=\infty$, \eqref{eq:RFI-bound} yields
\[
\limsup_{t\downarrow0}\msf{RFI}_{q}(\mu_{t}\mmid\pi_{t})=\frac{q}{\sigma^{4}}\,\frac{\E_{\pi}[\gamma_{\sigma^{2}}\,\abs{\cdot}^{2}]}{\E_{\pi}\gamma_{\sigma^{2}}}\leq\frac{q\,\E_{\pi}[\abs{\cdot}^{2}]}{\sigma^{4}}=\frac{qR^{2}}{\sigma^{4}}\,.
\]
Putting these together and using $\cpi(\pi)\lesssim\norm{\cov\pi}\log d$
\cite{Klartag23log},
\[
\limsup_{t\downarrow0}\frac{q\cpi(\pi_{t})}{4}\,\msf{RFI}_{q}(\mu_{t}\mmid\pi_{t})\lesssim\frac{q^{2}R^{2}\,\norm{\cov\pi}\log d}{\sigma^{4}}\,.
\]
Thus, if $\sigma^{2}\gtrsim qR\norm{\cov\pi}^{1/2}\log^{1/2}d$, then
the RHS can be made smaller than (say) $0.1$. Putting this back to
\eqref{eq:RPI-log}, it follows that $\eu R_{q}(\mu\mmid\pi)=\O(1)$.
\end{proof}

\paragraph{Covariance of strongly logconcave distributions.}

\cite{KV25faster} showed that $\norm{\cov\pi\gamma_{\sigma^{2}}}\lesssim\norm{\cov\pi}$
if $h\gtrsim R\norm{\cov\pi}^{1/2}\log^{2}d\log^{2}\nicefrac{R^{2}}{\Lambda}$
for $R=\E_{\pi}\abs{\cdot}$. This can be improved as an immediate
corollary of the early-stopping result. For the barycenter $b_{\pi}$
of $\pi$, any unit vector $v\in\Rd$, and $\mu=\pi\gamma_{\sigma^{2}}$
with $\sigma^{2}\gtrsim R\,\norm{\cov\pi}^{1/2}\log^{1/2}d$,
\[
\Bpar{\int\bpar{v^{\T}(x-b_{\pi})}^{2}\,\D\mu(x)}^{2}\leq\Bnorm{\frac{\D\mu}{\D\pi}}_{L^{2}(\pi)}^{2}\int\bpar{v^{\T}(x-b_{\pi})}^{4}\,\D\pi(x)\lesssim\Bnorm{\frac{\D\mu}{\D\pi}}_{L^{2}(\pi)}^{2}\norm{\cov\pi}^{2}\,,
\]
where we used the reverse H\"older in the last inequality. Thus,
$\norm{\cov\mu}\lesssim\norm{\frac{\D\mu}{\D\pi}}_{L^{2}(\pi)}\norm{\cov\pi}\,.$
Since $\eu R_{2}(\mu\mmid\pi)=\O(1)$, we have
\[
\norm{\cov\mu}\lesssim\norm{\cov\pi}\,.
\]

\subsection{R\'enyi-warmness generation\label{subsec:warm-analysis-uniform}}

Now that all pieces are ready, we show that an $\O(1)$-warm start
in $\eu R_{q}$ (equivalently, $L^{q}=O(1)$) can be obtained by using
$\Otilde(qd^{2}R^{3/2}\Lambda^{1/4})$ membership queries, where $B_{1}(0)\subset\K$,
$\Lambda=\norm{\cov\pi}$, and $R^{2}=\E_{\pi}[\abs{\cdot}^{2}]$.

\paragraph{Algorithm.}

Given $\veps_{\unif}=\eta_{\unif}\in(0,1/10)$ and $q\geq2$, we pick
a ``base'' order $q_{0}(\leq q)$ that will be preserved through annealing.
To get a failure-free algorithm, we restart the following algorithm
from scratch if $\psgauss$ declares failure in the middle: for $m$
the number of total phases,
\begin{enumerate}
\item Sample from $\mu_{1}=\pi\gamma_{d^{-1}}$ by rejection sampling with
proposal $\gamma_{d^{-1}}$, and set $\bar{\mu}_{1}:=\mu_{1}$.
\item While $\sigma_{i}^{2}\lesssim qR\Lambda^{1/2}\log^{1/2}d$, move from
$\bar{\mu}_{i}$ to $\bar{\mu}_{i+1}$ by $\psgauss$ such that $\eu R_{q_{0}}(\bar{\mu}_{i+1}\mmid\mu_{i+2})\leq1$,
where 
\[
\sigma_{i+1}^{2}=\sigma_{i}^{2}\,\bpar{1+\frac{\sigma_{i}}{q_{0}R}}\,.
\]

\begin{enumerate}
\item For $i<m$, run $\psgauss:\eu R_{q_{0}}\to\eu R_{2q_{0}}$ with $N_{i}=\Otilde(d^{2}\sigma_{i}^{2}\log q_{0}\log m)$,
where we set $\veps_{i}=1/10$ and $\eta_{i}=(10m)^{-1}$.
\item For $i=m$, run $\psgauss:\eu R_{q_{0}}\to\eu R_{q}$ with $N_{i}=\Otilde(qd^{2}R\Lambda^{1/2}\log m)$,
where we set $\veps_{i}=1/10$ and $\eta_{i}=(10m)^{-1}$.
\end{enumerate}
\end{enumerate}
Note that $N_{i}$ is chosen to ensure the target accuracies of $\psgauss$
in each phase (see \eqref{eq:psgauss-iter-comp}). As seen shortly,
given $q\geq\Otilde(1)$, we can take $q_{0}=O(\log q)$ so that the
annealing has ``linear'' dependence on $q$.\footnote{If we na\"ively attempt to preserve $q$-warmness, then the last
doubling of $\sigma_{i}^{2}$ incurs (roughly) $qd^{2}\sigma_{m}R\lesssim q^{3/2}d^{2}R^{3/2}\Lambda^{1/4}$
queries (i.e., $q^{3/2}$-dependence).}

\paragraph{Choice of parameters.}

As $m=\Otilde(q_{0}d^{1/2}R\log q)$, we have $N_{i}\leq\Otilde(qd^{2}R\Lambda^{1/2})=:N_{\max}(q)$.
Also, $N_{\unif}:=\Otilde(qd^{2}\Lambda\log\frac{1}{\eta_{\unif}}\log\frac{1}{\veps_{\unif}})$.
Recall that $q_{0}$ and $q$ should satisfy sufficient conditions
to invoke the established sampling guarantees of $\psgauss$ and $\psunif$
(Proposition~\ref{prop:unif-sampling} and~\ref{prop:gauss-sampling}).
Namely, $q\geq2\vee O(\log\frac{N_{\unif}}{\eta_{\unif}})$, $q_{0}\geq2\vee O(\log mN_{\max})$,
and $q\geq q_{0}$. The second condition is implied by $q_{0}\gtrsim\log\Otilde(q_{0}qd^{5/2}R^{2}\Lambda^{1/2}\log q)$,
so we will set $q_{0}=\Otilde(\log qdR\Lambda^{1/4})$. To ensure
$q\geq q_{0}$, we will restrict $q\geq\Otilde(\log dR\Lambda^{1/4})$.
Now, using \eqref{eq:q-suff-unif} for the first condition, we will
assume that
\begin{equation}
q\geq2\vee\Otilde(\log dR\Lambda^{1/4})\vee\Otilde\bpar{\log\frac{d^{2}\Lambda}{\eta_{\unif}}+\log\log\frac{1}{\veps_{\unif}}}\,,\quad q_{0}=\Otilde(\log qdR\Lambda^{1/4})\,.\label{eq:choice-q-q0}
\end{equation}
By abuse of notation, we sometime write $q\geq\Otilde(1)$ and $q_{0}=\Otilde(\log q)$.
We note that $\eta_{i}$ is chosen to bound by $1/10$ the probability
that the failure condition is met until the termination of the annealing.

\paragraph{High-level idea of analysis.}

We consider an annealing process across a sequence of target distributions
$\{\mu_{i}\}_{i=1}^{m}$, where each phase invokes the capped $\psgauss$
designed to move from (approximately) $\mu_{i}$ to (approximately)
$\mu_{i+1}$. Each phase may declare failure due to a capped inner
rejection sampler. 

\noindent Let $S^{(i)}$ be the event that phase $i$ succeeds (no
failure of $\psgauss$ within that phase), and let $S:=\cap_{i=1}^{m-1}S^{(i)}$
be the global success event. Let $C^{(i)}\ge0$ be the query complexity
incurred by phase $i$, and define the one-attempt total cost
\[
C:=\sum_{i=1}^{m-1}\ind\bbrace{\bigcap_{j=1}^{i-1}S^{(j)}}\,C^{(i)}\,.
\]
Clearly, $\E C\leq\sum_{i=1}^{m-1}\E C^{(i)}$, so it suffices to
upper bound $\E C^{(i)}$. Similarly, letting $F^{(i)}$ denote the
event that phase $i$ fails and $q^{(i)}$ denote its cap-hit probability
inside phase $i$, we have
\[
\P(F^{(i)})=\E\bbrack{\ind\bbrace{\cap_{j=1}^{i-1}S^{(j)}}\,q^{(i)}}\leq\E q^{(i)}\,,\qquad\P(S^{c})\leq\sum_{i=1}^{m-1}\E q^{(i)}\,,
\]
which allows us to bound the overall failure probability via unconditional
phase-wise estimates (under the uncapped intermediate laws used in
the analysis).

Similar to the analysis of the restarting version of $\psunif$, a
restart-until-success annealing repeats independent full attempts
(including resampling the initial input from $\mu_{1}$) until $S$
occurs. If $\P(S)\ge1-\eta$, then the expected number of attempts
is at most $(1-\eta)^{-1}$ and 
\[
\E C_{\text{restart}}=\frac{\E C}{\P(S)}\leq\frac{\E C}{1-\eta}\,.
\]
In summary, we first establish (phase-wise) bounds on unconditional
failure probabilities and unconditional expected costs, combine them
to control $\P(S)$ and $\E C$ for one attempt, and then translate
these into guarantees for the returned-sample distribution $\nu$
and for the restart wrapper.
\begin{proof}
[Proof of Theorem~\ref{thm:complexity-Renyi-warm}] Pick $q$ and
$q_{0}$ as in \eqref{eq:choice-q-q0}. The initialization by rejection
sampling requires $\O(1)$ queries in expectation due to 
\[
\frac{\int_{\Rd}e^{-d\,\abs x^{2}/2}\,\D x}{\int_{\K}e^{-d\,\abs x^{2}/2}\,\D x}=\frac{1}{\gamma_{d^{-1}}(\K)}\leq\frac{1}{\gamma_{d^{-1}}(B_{1})}\lesssim1\,.
\]

In the annealing phase, suppose that we are given $\bar{\mu}_{i}$
such that $\eu R_{2q_{0}}(\bar{\mu}_{i}\mmid\mu_{i})\leq\frac{1}{10}$.
By a weak triangle inequality for R\'enyi divergence (with $\lda=2/3$),
\[
\eu R_{q_{0}}(\bar{\mu}_{i}\mmid\mu_{i+1})\leq1.1\eu R_{2q_{0}}(\bar{\mu}_{i}\mmid\mu_{i})+\eu R_{2q_{0}}(\mu_{i}\mmid\mu_{i+1})\leq\frac{1.1}{10}+\half\leq1\,,
\]
where the last inequality follows from Lemma~\ref{lem:closeness-annealing}.
By Theorem~\ref{thm:gauss-warm}, from an $\O(1)$-warmness in $\eu R_{q_{0}}$-divergence
(precisely, $\eu R_{q_{0}}(\bar{\mu}_{i}\mmid\mu_{i+1})\leq1$), $\psgauss$
needs $\Otilde(d^{2}\sigma_{i}^{2}\log^{2}q_{0})$ queries to sample
$X_{i+1}^{*}\sim\bar{\mu}_{i+1}$ such that $\eu R_{2q_{0}}(\bar{\mu}_{i+1}\mmid\mu_{i+1})\leq\frac{1}{10}$.
Also, doubling of given $\sigma_{i}^{2}$ requires $\O(q_{0}R/\sigma_{i})$
phases, so each doubling needs $\Otilde(q_{0}d^{2}\sigma_{i}R)$ queries
in total. Since there are at most $\O(\log qdR\Lambda^{1/2})$ doublings,
and $\sigma_{i}^{2}\leq\sigma_{m}^{2}\asymp qR\Lambda^{1/2}\log^{1/2}d$,
we can obtain a sample $X^{*}\sim\bar{\mu}_{m-1}$ such that $\eu R_{2q_{0}}(\bar{\mu}_{m-1}\mmid\mu_{m-1})\leq\frac{1}{10}$,
using
\[
\O\bpar{q_{0}q^{1/2}d^{2}R^{3/2}\Lambda^{1/4}\log^{1/4}d\times\log qdR\Lambda^{1/2}}=\Otilde(q^{1/2}d^{2}R^{3/2}\Lambda^{1/4})\quad\text{queries}\,.
\]
By hypercontractivity of $\PS$ (Corollary~\ref{cor:PS-mixing-LSI}),
iterating $\psgauss$ started from $X^{*}$ towards $\mu_{m}$ $\Otilde(d^{2}\sigma_{m}^{2}\log q)$
times leads to a new sample $X'$ such that $\eu R_{2q}(\law X'\mmid\mu_{m})\leq\frac{1}{10}$.
Hence, the entire annealing has used
\[
\Otilde\bpar{q^{1/2}d^{2}R^{3/2}\Lambda^{1/4}+qd^{2}R\Lambda^{1/2}}\leq\Otilde(qd^{2}R^{3/2}\Lambda^{1/4})\quad\text{queries (due to }R^{2}\geq\Lambda\text{)}\,.
\]
Note that the early-stopping result (Lemma~\ref{lem:early-stopping})
and the weak triangle inequality imply $\eu R_{q}(\law X'\mmid\pi)\leq1$
as desired.

Lastly, restarting the annealing (in case of failure) increases the
total query complexity multiplicatively by $10/9$, and the conditional
law on success has an extra bias at most $1$ in $\eu R_{\infty}$.
Hence, by $\eu R_{q}(\law(X'\mid\text{success})\mmid\pi)\leq2$.
\end{proof}
We now establish the query complexity of convex-body sampling from
scratch.
\begin{proof}
[Proof of Corollary~\ref{cor:unif-comp-scratch}] We set $\eta_{\unif}=\veps_{\unif}=\veps<1/100$
and consider $q\geq\Otilde(1)$. By Theorem~\ref{thm:unif-warm},
the expected number of restarts of $\psunif$ is at most $O(1)$,
so the warm-start generation will be repeated at most $O(1)$ times.
Combining the complexity results in Theorem~\ref{thm:complexity-Renyi-warm}
and~\ref{thm:unif-warm}, the total complexity of logconcave sampling
from scratch is 
\[
\Otilde\bpar{qd^{2}R^{3/2}\Lambda^{1/4}+qd^{2}\Lambda\log^{6}\frac{1}{\veps}}\,.
\]
When $1\leq q\leq2\vee\Otilde(\log\frac{dR}{\veps})=:q_{*}$, we generate
the $\eu R_{q_{*}}$-warmness and use the monotonicity of $\eu R_{q}$.
\end{proof}

\section{Logconcave sampling under an evaluation oracle\label{sec:complexity-LC}}

To extend our approach to logconcave distributions, we generalize
what we did for uniform sampling, streamlining algorithms in \cite{KV25faster}
and obtaining a guarantee for $\eu R_{q}$-warmness generation.

We begin with analysis for order-preserving logconcave sampling from
an $O(1)$-warm start in \S\ref{subsec:LC-sampling-from-warmstart}
and move to an order-preserving annealing algorithm for warm-start
generation in \S\ref{subsec:LC-warmness-generation}. Overall, we
follow a similar route for the analysis as in the uniform sampling
case.

\subsection{Zeroth-order logconcave sampling with balanced guarantees\label{subsec:LC-sampling-from-warmstart}}

We use the exponential-lifting approach \eqref{eq:exp-lifting} introduced
in \cite{KV25sampling}, obtaining an order-preserving guarantee under
an $O(1)$-warm start. For $z=(x,t)\in\R^{d+1}$ and $y\in\R^{d+1}$,
$\psexp$ alternates $y\sim\pi^{Y|Z=z}=\mc N(z,hI_{d+1})$ and $z\sim\pi^{Z|Y=y}\propto\mc N(y-h\alpha,hI_{d+1})|_{\K}$
for $\alpha=de_{d+1}$, where the second step is implemented via rejection
sampling, using the proposal $\mc N(y-h\alpha,hI_{d+1})$ with threshold
$\tau$. 

\paragraph{A mixing analysis.}

Assume that $\pi_{0}^{X}$ is $M_{q}$-warm with respect to $\pi^{X}$.
Note that one can generate a sample $(X',T')$ whose joint law is
also $M_{q}$-warm with respect to $\pi^{X,T}$ by first drawing $X'\sim\pi_{0}^{X}$
and then sampling $T'\sim\pi^{T|X=X'}\propto e^{-dt}|_{[V(X')/d,\infty)}$.

Since the convergence rate of $\PS$ was already given in \S\ref{subsec:mixing},
we can just reuse Proposition~\ref{prop:mixing-general}. The only
adjustment is that we now require a bound on $\cpi(\pi^{X,T})$, rather
than $\cpi(\pi^{X})$, as we run $\PS$ to sample from $\pi^{X,T}$.
Since $\cpi(\pi^{X,T})\lesssim\norm{\cov\pi^{X,T}}\log d$ due to
log-concavity of $\pi^{X,T}$, and in \cite[Lemma 2.5]{KV25sampling},
$\norm{\cov\pi^{X,T}}$ is bounded as
\[
\norm{\cov\pi^{X,T}}\leq2\,(\norm{\cov\pi^{X}}+160)\lesssim\norm{\cov\pi^{X}}\vee1\,,
\]
we have $\cpi(\pi^{X,T})\lesssim(1\vee\norm{\cov\pi^{X}})\log d$.
Hence, by Lemma~\ref{lem:psunif-mixing}, $\psexp$ with step size
$h$ can ensure $(\eu R_{q}(\pi_{N}^{X}\mmid\pi^{X})\leq)\eu R_{q}(\pi_{N}^{X,T}\mmid\pi^{X,T})\leq\veps$
after 
\[
N\lesssim q\,\bpar{1\vee h^{-1}\,(1\vee\norm{\cov\pi^{X}})\log d}\log\frac{M_{q}}{\veps}\quad\text{iterations}\,.
\]

\paragraph{Per-step analysis.}

It follows from \S5.1 in \cite{KV25faster} that with the parameter
choices $S=\frac{16NM_{2}}{\eta}$, $c=\frac{(\log\log S)^{2}}{13^{2}\log S}$,
$\tau=Z^{2}\log^{2}Z$, and $h=c/(13d)^{2}$, the failure probability
satisfies $\E_{\pi_{k}^{Y}}[(1-\ell)^{\tau}]\leq\frac{\eta}{N}$.
The per-step complexity is bounded as follows: setting $q=1+\alpha$
with $\alpha\ge1\vee\log\tau$, it follows that $\E_{\pi_{k}^{Y}}[\frac{1}{\ell}\wedge\tau]\leq5M_{q}\log S$.
\begin{prop}
\label{prop:LC-sampling}For a convex function $V:\Rd\to\R$, let
$\pi\propto e^{-V}$ be the logconcave distribution over $\Rd$ with
$B_{1}\subset\msf L_{\pi,g}$ and $\Lambda=\norm{\cov\pi}$, given
access to an evaluation oracle for $V$. Given $\veps>0$, $\eta\in(0,1/2)$,
$q\geq2$, and an initial distribution $\pi_{0}$ with $M_{q}=\norm{\nicefrac{\D\pi_{0}}{\D\pi}}_{L^{q}(\pi)}$,
initialize $\psexp$ with $\pi_{0}$ and iterate it $N=\Otilde(qd^{2}\,(1\vee\Lambda)\log\frac{1}{\veps}\log\frac{1}{\eta})$
times with $h=(13d^{2}\log S)^{-1}$ and $\tau=S^{2}\log^{2}S$ for
$S=\frac{16NM_{2}}{\eta}$. If $q\geq1+\log\tau$ and $M_{q}\leq10$,
then 
\begin{itemize}
\item The sufficient condition on $q$ is satisfied when $q\geq2\vee O(\log\frac{N}{\eta})$.
\item With probability at least $1-\eta$, $\psexp$ iterates $N$ times
without failure. Conditioned on this success, the $X$-output $X^{*}\sim\nu$
satisfies $\eu R_{q}(\nu\mmid\pi)\leq\veps+2\log\frac{1}{1-\eta}$,
with $\Otilde(qd^{2}\,(1\vee\Lambda)\log\frac{1}{\veps}\log^{2}\frac{1}{\eta})$
evaluation queries in expectation.
\end{itemize}
\end{prop}

Its proof is almost similar to the analysis of $\psunif$ in Proposition~\ref{prop:unif-sampling}.
Putting all these together, we can establish the query complexity
of $\psexp$ with balanced guarantees.
\begin{proof}
[Proof of Theorem~\ref{thm:lc-warm}] Take $\veps=\eta<0.01$, and
restart $\psexp$ upon failure. Note that the expected number of restarts
is at most $0.02$. In order to ensure $q\geq2\vee O(\log N)$ for
$N=\Otilde(qd^{2}\,(1\vee\Lambda)\log^{2}\frac{1}{\veps})$, it suffices
to restrict 
\[
q\geq2\vee\Otilde\bpar{\log(d^{2}\,(1\vee\Lambda))+\log\log\frac{1}{\veps}}\,.
\]
By the proposition above, the desired claims follow in a similar way
to the proof of Theorem~\ref{thm:unif-warm}.
\end{proof}

\subsection{R\'enyi-warmness generation for logconcave distributions\label{subsec:LC-warmness-generation}}

We streamline a warm-start algorithm in \cite{KV25faster}, which
generates a warm start for the lifted distribution $\pi\propto e^{-dt}|_{\K}$
by annealing through distributions $\mu_{\sigma^{2},\rho}(x,t)\propto\exp(-\frac{1}{2\sigma^{2}}\,\abs x^{2}-\rho t)\,\ind_{\bar{\K}}(x,t)$
for some $\bar{\K}$. In comparison with our warm-start algorithm
for uniform distributions, we can also stop the $\sigma^{2}$-annealing
earlier than \cite{KV25faster}, but we could not remove the preprocessing
step therein, in order to ensure a finite LSI constant of the annealing
distributions. Also, since the complexity of sampling from $\mu_{\sigma^{2},\rho}$
(by $\psann$ in \S\ref{subsec:complexity-tilted-Gaussian}) is roughly
$d^{2}(\sigma^{2}\vee1)$ (from an $\O(1)$-warm start), which has
the additive $d^{2}$ queries, we cannot help but following a convoluted
approach in \cite{KV25faster}.

\paragraph{Warm-start generation.}

For $l=\log4e$, one may assume that $x_{0}=0$ (by translation) and
consider
\begin{equation}
\bar{\K}:=\K\cap\{B_{Rl}(0)\times[-21,13l-6]\}\,,\label{eq:convex-truncation}
\end{equation}
which satisfies $\pi(\R^{d+1}\backslash\bar{\K})\leq1/2$ (see \cite[Convex truncation and \S3.1]{KV25sampling}).
Thus, we can ensure that $\bar{\pi}=\pi|_{\bar{\K}}$ is $\log2$-close
to $\pi$ in $\eu R_{\infty}$. Below, we let $D:=Rl$ and $\mu_{i}:=\mu_{\sigma_{i}^{2},\rho_{i}}$,
and denote by $\bar{\mu}_{i}$ the law of an actual sample such that
$\eu R_{\Otilde(1)}(\bar{\mu}_{i}\mmid\mu_{i})=O(1)$.
\begin{enumerate}
\item Sample from $\mu_{0}\propto e^{-d\,\abs x^{2}/2}|_{\bar{\K}}$ by
rejection sampling with proposal $\gamma_{d^{-1}}\otimes\text{Unif}\,([-21,13l-6])$).
\item {[}Phase I: $\sigma^{2}$-warming{]} While $d^{-1}\leq\sigma_{i}^{2}\leq1$,
move from $\bar{\mu}_{i}$ to $\bar{\mu}_{i+1}$ by $\psann$, where
\[
\sigma_{i+1}^{2}=\sigma_{i}^{2}\,\bpar{1+\frac{1}{d^{1/2}}}\,.
\]
At the end of Phase I, run $\psann$ toward $\mu_{i+1}\propto e^{-\abs x^{2}/2-t}|_{\bar{\K}}$,
obtaining $\bar{\mu}_{i+1}$.
\item {[}Phase II: $\rho$-annealing{]} While $1\leq\rho_{i}\leq d$ (and
$\sigma_{i}^{2}\approx1$), move from $\bar{\mu}_{i}$ to $\bar{\mu}_{i+1}$
by $\psann$, where
\[
\sigma_{i+1}^{2}=\sigma_{i}^{2}\,\bpar{1+\frac{1}{d^{1/2}}}^{-1}\quad\&\quad\rho_{i+1}=\min\bpar{d,\rho_{i}\,\bpar{1+\frac{1}{d^{1/2}}}}\,.
\]
This is followed by inner annealing: until $\sigma_{i+1}^{2}\leq1$,
run $\psann$ toward \emph{new} $\mu_{i+1}$ defined by
\[
\sigma_{i+1}^{2}\gets\sigma_{i+1}^{2}\,\bpar{1+\frac{\sigma_{i+1}}{D}}\,,
\]
\item {[}Phase III: $\sigma^{2}$-annealing{]} While $\sigma_{i}^{2}\lesssim qD\,(\Lambda^{1/2}\vee1)\log^{1/2}d$
(and $\rho_{i}=d$), move from $\bar{\mu}_{i}$ to $\bar{\mu}_{i+1}$
by $\psann$, where 
\[
\sigma_{i+1}^{2}=\sigma_{i}^{2}\,\bpar{1+\frac{\sigma_{i}}{D}}\,.
\]
\end{enumerate}

\subsubsection{Proximal sampler for tilted Gaussian\label{subsec:complexity-tilted-Gaussian}}

Now consider the sampler for the annealing distributions $\mu_{i}$
proposed in \cite{KV25sampling}. For $v:=(x,t)\in\Rd\times\R$ and
$w:=(y,s)\in\Rd\times\R$, consider the augmented target given by
\[
\mu^{V,W}(v,w)\propto\exp\bpar{-\frac{1}{2\sigma^{2}}\,\abs x^{2}-\rho t-\frac{1}{2h}\,\abs{w-v}^{2}}\big|_{\bar{\K}}\,,
\]
and $\psann$ with step size $h$ and threshold $\tau$ alternates
the following: for $r:=\frac{\sigma^{2}}{h+\sigma^{2}}<1$, $y_{r}:=ry$,
and $h_{r}:=rh$,
\begin{itemize}
\item {[}Forward{]} $w_{k+1}\sim\mu^{W|V=v_{k}}=\mc N(v_{k},hI_{d+1})$.
\item {[}Backward{]} $v_{k+1}\sim\mu^{V|W=w_{k+1}}\propto\bbrack{\mc N(ry_{k+1},h_{r}I_{d})\otimes\mc N(s_{k+1}-\rho h,h)}\big|_{\bar{\K}}$.
To this end, draw $v_{k+1}\sim\mc N(ry_{k+1},h_{r}I_{d})\otimes\mc N(s_{k+1}-\rho h,h)$
until $v_{k+1}\in\bar{\K}$. If this rejection loop exceeds $\tau$
trials, declare failure.
\end{itemize}
We establish its query complexity similarly as before.
\begin{prop}
\label{prop:ann-sampling} Consider the annealing distribution $\mu=\mu_{\sigma^{2},\rho}$
above, given access to an evaluation oracle for $V$. Given $\veps>0$,
$\eta\in(0,1/2)$, $q\geq q_{0}\geq2$, and an initial distribution
$\mu_{0}$ with $M_{q_{0}}=\norm{\nicefrac{\D\mu_{0}}{\D\mu}}_{L^{q_{0}}(\mu)}$,
initialize $\psann$ with $\mu_{0}$ and iterate it $N=\Otilde(d^{2}\,(\sigma^{2}\vee1)\log\frac{q}{\veps}\log\frac{1}{\eta})$
times with $h=(24^{2}d^{2}\log S)^{-1}$ and $\tau=2S^{2}\log^{2}S$
for $S=\frac{16NM_{2}}{\eta}$. If $q_{0}\geq1+\log\tau$ and $M_{q_{0}}\leq10$,
then
\begin{itemize}
\item The sufficient condition on $q_{0}$ is satisfied when $q_{0}\geq2\vee O(\log\frac{N}{\eta})$.
\item With probability at least $1-\eta$, $\psann$ iterates $N$ times
without failure. Conditioned on this success, the output $Z\sim\nu$
satisfies $\eu R_{q}(\nu\mmid\mu)\leq\veps+2\log\frac{1}{1-\eta}$,
with $\Otilde(d^{2}\,(\sigma^{2}\vee1)\log\frac{q}{\veps}\log^{2}\frac{1}{\eta})$
evaluation queries in expectation.
\end{itemize}
\end{prop}

\begin{proof}
Overall, we follow the analysis of $\psgauss$ as in Proposition~\ref{prop:gauss-sampling}.
Using the Bakry--\'Emery criterion and bounded perturbation, one
can show that $\clsi(\mu)\lesssim\sigma^{2}\vee1$ (see \cite[Lemma 3.3]{KV25sampling}).
For $q\geq q_{0}>1$, when $\psann$ starts from $\mu_{0}^{V}$, by
Corollary~\ref{cor:PS-mixing-LSI}, in order to be $\veps$-close
to $\mu^{V}$ in $\eu R_{q}$, for $q_{0}>1$, $\psann$ suffices
to iterate
\[
h^{-1}\,(\sigma^{2}\vee1)\log\frac{q\eu R_{2\wedge q_{0}}(\mu_{0}^{V}\mmid\mu^{V})}{(2\wedge q_{0}-1)\,\veps}\qquad\text{iterations}\,.
\]
Also, \cite{KV25faster} showed that under $c=\frac{(\log\log S)^{2}}{4\cdot24^{2}\log S}$,
$h=\frac{c}{24^{2}d^{2}}$, and $\tau=2S^{2}\log^{2}S$ for $S=\frac{16NM_{2}}{\eta}$,
the per-step query complexity is bounded by $O(M_{q}\log S)$, and
the failure probability is bounded by $\eta/N$.

For $q\geq q_{0}\geq2$ and $(M_{2}\leq)M_{q_{0}}\leq10$, the required
number $N$ of iterations (of $\psann$) for $\veps$-closeness in
$\eu R_{q}$ should satisfy $N\gtrsim h^{-1}\,(\sigma^{2}\vee1)\log\frac{q}{\veps}$.
Thus, it suffices to take 
\[
N=\Otilde\bpar{d^{2}\,(\sigma^{2}\vee1)\log\frac{q}{\veps}\log\frac{1}{\eta}}\,.
\]
Also, a sufficient condition for ensuring $q_{0}\geq2\vee\log\tau$
is $q_{0}\geq2\vee O(\log\frac{N}{\eta})$. Under this condition,
the per-step complexity is $O(\log S)$, so the expected number of
queries for $\psann:\eu R_{q_{0}}\to\eu R_{q}$ is 
\[
\Otilde\bpar{d^{2}\,(\sigma^{2}\vee1)\log\frac{q}{\veps}\log^{2}\frac{1}{\eta}}\,,
\]
and the success probability is at least $1-\eta$.
\end{proof}

\subsubsection{Query complexity of R\'enyi-warmness generation\label{subsec:general-LC-warm-generation}}

Overall, we follow the same analysis of the annealing for uniform
distributions; see `High-level idea of analysis' in \S\ref{subsec:warm-analysis-uniform}.

\paragraph{Algorithm.}

Given $\veps_{\exp}=\eta_{\exp}\in(0,1/10)$ and $q\geq2$, we pick
a base order $q_{0}(\leq q)$ that will be preserved through annealing,
and restart the following algorithm from scratch if $\psann$ declares
failure in the middle.

Recall $D=Rl$ (for a large constant $l>0$ such that $\eu R_{\infty}(\bar{\pi}\mmid\pi)\leq2$)
and $\mu_{i}=\mu_{\sigma_{i}^{2},\rho_{i}}$, and $\bar{\mu}_{i}$
is the law of an actual sample such that $\eu R_{2q_{0}}(\bar{\mu}_{i}\mmid\mu_{i})\leq\frac{1}{10}$.
For $m$ the number of total phases, $\psann$ is iterated $N_{i}$
times with $\veps_{i}=1/10$ and $\eta_{i}=(10m)^{-1}$ in each phase
$i\in[m]$.
\begin{enumerate}
\item Sample from $\mu_{0}\propto e^{-d\,\abs x^{2}/2}|_{\bar{\K}}$ by
rejection sampling with proposal $\gamma_{d^{-1}}\otimes\text{Unif}\,([-21,13l-6])$).
\item {[}Phase I: $\sigma^{2}$-warming{]} While $d^{-1}\leq\sigma_{i}^{2}\leq1$,
move from $\bar{\mu}_{i}$ to $\bar{\mu}_{i+1}$ by $\psann:\eu R_{q_{0}}\to\eu R_{2q_{0}}$,
where 
\[
\sigma_{i+1}^{2}=\sigma_{i}^{2}\,\bpar{1+\frac{1}{(4q_{0}d)^{1/2}}}\,.
\]
At the end of Phase I, run $\psann:\eu R_{q_{0}}\to\eu R_{2q_{0}}$
toward $\mu_{i+1}\propto e^{-\abs x^{2}/2-t}|_{\bar{\K}}$, obtaining
$\bar{\mu}_{i+1}$.
\item {[}Phase II: $\rho$-annealing{]} While $1\leq\rho_{i}\leq d$ (and
$\sigma_{i}^{2}\approx1$), move from $\bar{\mu}_{i}$ to $\bar{\mu}_{i+1}$
by $\psann:\eu R_{q_{0}}\to\eu R_{2q_{0}}$, where
\[
\sigma_{i+1}^{2}=\sigma_{i}^{2}\,\bpar{1+\frac{1}{(4q_{0}d)^{1/2}}}^{-1}\quad\&\quad\rho_{i+1}=\min\bpar{d,\rho_{i}\,\bpar{1+\frac{1}{(4q_{0}d)^{1/2}}}}\,.
\]
This is followed by inner annealing: until $\sigma_{i+1}^{2}\leq1$,
run $\psann:\eu R_{q_{0}}\to\eu R_{2q_{0}}$ toward \emph{new} $\mu_{i+1}$
defined by
\[
\sigma_{i+1}^{2}\gets\sigma_{i+1}^{2}\,\bpar{1+\frac{\sigma_{i+1}}{q_{0}D}}\,,
\]
\item {[}Phase III: $\sigma^{2}$-annealing{]} While $\sigma_{i}^{2}\lesssim qD\,(\Lambda^{1/2}\vee1)\log^{1/2}d$
(and $\rho_{i}=d$), move from $\bar{\mu}_{i}$ to $\bar{\mu}_{i+1}$
by $\psann:\eu R_{q_{0}}\to\eu R_{2q_{0}}$, where 
\[
\sigma_{i+1}^{2}=\sigma_{i}^{2}\,\bpar{1+\frac{\sigma_{i}}{q_{0}D}}\,.
\]
When $i=m-1$ (i.e., the last phase), we move from $\bar{\mu}_{m-1}$
to $\bar{\mu}_{m}$ by $\psann:\eu R_{q_{0}}\to\eu R_{q}$.
\end{enumerate}

\paragraph{Choice of parameters.}

Note that the number $m$ of inner phases is bounded as
\[
m\lesssim(q_{0}d)^{1/2}\log d+(q_{0}d)^{1/2}\log d\times\frac{q_{0}^{1/2}D}{d^{1/2}}+q_{0}D\log\bpar{qD\,(\Lambda^{1/2}\vee1)}=\Otilde\bpar{q_{0}d^{1/2}D\log q(1\vee\Lambda^{1/2})}\,.
\]
Hence, $N_{i}=\Otilde(d^{2}\,(\sigma_{i}^{2}\vee1)\log^{2}q_{0}m)\leq\Otilde(qd^{2}D\,(\Lambda^{1/2}\vee1))=:N_{\max}$
and $N_{\exp}:=\Otilde(qd^{2}\,(\Lambda\vee1)\log\frac{1}{\eta_{\exp}}\log\frac{1}{\veps_{\exp}})$. 

We now find sufficient conditions for $q_{0}$ and $q$ to invoke
the established sampling guarantees of $\psann$ and $\psexp$ (see
Proposition~\ref{prop:LC-sampling} and~\ref{prop:ann-sampling});
namely, $q\geq2\vee O(\log\frac{N_{\exp}}{\eta_{\exp}})$, $q_{0}\geq2\vee O(\log mN_{\max})$,
and $q\geq q_{0}$.

Repeating a similar computation as in the uniform-sampling case, we
will take $q_{0}=2\vee\Otilde(\log\{qdD\,(\Lambda\vee1)\})$ and 
\begin{equation}
q\gtrsim2\vee\Otilde\bpar{\log\frac{d^{2}\,(\Lambda\vee1)}{\eta_{\exp}}+\log\log\frac{1}{\veps_{\exp}}}\vee\Otilde\bpar{\log\{dD\,(\Lambda\vee1)\}}\,.\label{eq:q-q0-general}
\end{equation}
Also, due to the choice of $\eta_{i}$, the probability that the failure
condition is met until the termination of the annealing is bounded
by $\frac{1}{10}$.

\paragraph{Complexity analysis.}

We first recall a result on closeness of two logconcave distributions
from \cite[Lemma 4.4]{KV25faster}.
\begin{lem}
\label{lem:global-annealing} Let $\D\nu\propto e^{-V}\,\D x$ and
$\D\mu\propto e^{-(1+\alpha)\,V}\,\D x$ be logconcave probability
measures over $\Rd$. For $q>1$ and $\delta>0$ such that $1-q\delta>0$,
\[
\eu R_{q}(\mu\mmid\nu)\leq\begin{cases}
\frac{qd\alpha^{2}}{2} & \text{if }\alpha\geq0\,,\\
\frac{qd\alpha^{2}}{1-q\delta} & \text{if }\alpha\in[-\delta/2,0]\,.
\end{cases}
\]
\end{lem}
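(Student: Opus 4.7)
The plan is to reduce the R\'enyi divergence to a second-order difference of the log-partition function and then bound its second derivative via a classical variance identity for log-concave measures. Set $Z(s) := \int e^{-sV}\, \D x$ and $\psi(s) := \log Z(s)$ for $s > 0$. Since $\D\mu/\D\nu = (Z(1)/Z(1+\alpha))\, e^{-\alpha V}$, a direct computation gives
\[
\int \bpar{\D\mu/\D\nu}^q \, \D\nu = \frac{Z(1)^{q-1}\, Z(1+q\alpha)}{Z(1+\alpha)^q},
\]
and therefore
\[
\eu R_q(\mu \mmid \nu) = \frac{1}{q-1}\bbrack{\psi(1+q\alpha) - q\,\psi(1+\alpha) + (q-1)\,\psi(1)}.
\]
This is well defined in both cases because $1 + q\alpha > 0$: in the second case $\alpha \geq -\delta/2$ together with $1 - q\delta > 0$ yields $1 + q\alpha \geq 1 - q\delta/2 > 1/2$.

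Next, differentiation under the integral sign gives $\psi'(s) = -\E_{\mu_s} V$ and $\psi''(s) = \var_{\mu_s}(V)$, where $\mu_s \propto e^{-sV}$ is log-concave for every $s > 0$. The key input is the classical Bobkov--Madiman variance bound: for any log-concave probability density $\rho \propto e^{-W}$ on $\Rd$, one has $\var_\rho(W) \leq d$. Applied to $\mu_s$ with $W_s = sV$, this yields $s^2\,\psi''(s) = \var_{\mu_s}(sV) \leq d$, i.e.\ $\psi''(s) \leq d/s^2$.

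Now set $h(r) := \psi(1 + r\alpha)$, so that $h''(r) = \alpha^2\, \psi''(1 + r\alpha)$. Taylor-expanding $h(q)$ and $h(1)$ around $0$ and combining gives the integral representation
\[
h(q) - q\, h(1) + (q-1)\, h(0) = (q-1)\int_0^1 r\, h''(r)\, \D r + \int_1^q (q-r)\, h''(r)\, \D r,
\]
with both weights non-negative. For $\alpha \geq 0$, the bound $1 + r\alpha \geq 1$ gives $h''(r) \leq \alpha^2 d$, so the two integrals sum to at most $\alpha^2 d\, q(q-1)/2$; dividing by $q-1$ yields $\eu R_q(\mu \mmid \nu) \leq q d \alpha^2/2$. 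For $\alpha \in [-\delta/2, 0]$, one uses $1 + r\alpha \geq 1 - q\delta/2$ together with $(1 - q\delta/2)^2 \geq 1 - q\delta$ to get $h''(r) \leq \alpha^2 d/(1 - q\delta)$, and the same integrals yield $\eu R_q(\mu \mmid \nu) \leq q d \alpha^2/[2(1 - q\delta)] \leq q d \alpha^2/(1 - q\delta)$.

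The only nontrivial ingredient is the variance bound $\var_\rho(W) \leq d$; absent this, one could fall back on the Brascamp--Lieb inequality applied to $f = W$, but this requires strict convexity of $W$ and an extra integration-by-parts argument to bound $\E_\rho[\inner{\nabla W, (\hess W)^{-1}\nabla W}]$. I expect that to be the main conceptual step; the remainder is a routine Taylor computation that produces the $q d \alpha^2$ scaling automatically, with the $(1-q\delta)^{-1}$ factor arising only from the pointwise lower bound on $1 + r\alpha$.
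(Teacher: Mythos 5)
Your proof is correct. Note that the paper does not prove this lemma at all---it is imported verbatim from \cite[Lemma 4.4]{KV25faster}---so there is no in-paper argument to compare against; I can only assess your proposal on its own terms. The reduction to $\eu R_q(\mu\mmid\nu)=\frac{1}{q-1}[\psi(1+q\alpha)-q\,\psi(1+\alpha)+(q-1)\,\psi(1)]$ with $\psi=\log Z$ is exact, the integral representation $h(q)-q\,h(1)+(q-1)\,h(0)=(q-1)\int_0^1 r\,h''+\int_1^q(q-r)\,h''$ checks out with nonnegative weights, and the two case bounds follow from $\psi''(s)\leq d/s^2$ exactly as you say (your second case even comes out a factor of $2$ better than the stated bound). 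The one load-bearing external ingredient is the sharp varentropy bound $\var_\rho(\log\rho)\leq d$ for log-concave $\rho$; this is a genuine theorem (Nguyen, and independently Wang; see also Fradelizi--Madiman--Wang), and the sharp constant $d$ is what delivers the clean $qd\alpha^2/2$, so you should cite it rather than treat it as folklore. Your fallback via Brascamp--Lieb is unnecessary given that reference. Minor points: you should note explicitly that $1+\alpha>0$ (implicit in $\mu$ being a probability measure) and that $Z(s)<\infty$ for all $s>0$ by the linear growth of a convex $V$ with $e^{-V}$ integrable, which also justifies differentiating under the integral sign.
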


We now bound the complexity of the overall algorithm as $\Otilde(d^{2.5}+qd^{2}\,(R^{3/2}\vee1)(\Lambda^{1/4}\vee1))$.
\begin{proof}
[Proof of Theorem~\ref{thm:complexity-Renyi-warm-LC}] We set $\eta_{\exp}=\veps_{\exp}=\veps<1/100$
and consider $q_{0}$ and $q\geq2\vee\Otilde(\log\frac{dD}{\veps})$
(to satisfy \eqref{eq:q-q0-general}). At initialization, $\mu_{0}$
can be sampled by rejection sampling with $\O(1)$ queries in expectation
(see \cite[Lemma 5.6]{KV25sampling}). In Phase I, any consecutive
distributions satisfy $\eu R_{2q_{0}}(\mu_{i}\mmid\mu_{i+1})\leq1$
by Lemma~\ref{lem:global-annealing}. Since there are $\O(q_{0}^{1/2}d^{1/2}\log d)$
many phases, with each one requiring $\Otilde(d^{2}\log^{2}q_{0}m)$
queries from an $O(1)$-warm start, the query complexity of Phase
I is $\Otilde(q_{0}^{1/2}d^{2.5}\log^{2}q_{0}m)$. Meanwhile, we relay
$\eu R_{q_{0}}$-warmness through the weak triangle inequality (as
done in the proof of Theorem~\ref{thm:complexity-Renyi-warm}).

In Phase II, closeness during the simultaneous annealing (called the
outer annealing hereafter) is justified again by Lemma~\ref{lem:global-annealing},
and $\psann$ requires $\Otilde(d^{2}\log^{2}q_{0}m)$ queries for
each outer annealing. Each outer annealing is followed by $O(q_{0}^{1/2}D/d^{1/2})$
many inner phases, where $\sigma^{2}$ goes back to $1$ again. Closeness
within the inner annealing follows from Lemma~\ref{lem:closeness-annealing},
and $\psann$ requires $\Otilde(d^{2}\log^{2}q_{0}m)$ queries. Thus,
completion of one outer annealing (combined with the inner annealing)
requires $\Otilde((d^{2}+q_{0}^{1/2}d^{1.5}D)\log^{2}q_{0}m)$ queries.
As there are $\O(q_{0}^{1/2}d^{1/2}\log d)$ outer phases, the total
complexity of Phase II is $\Otilde((q_{0}^{1/2}d^{2.5}+q_{0}d^{2}D)\log^{2}q_{0}m)$.

In Phase III, closeness of annealing distributions follow again from
Lemma~\ref{lem:closeness-annealing}. Since doubling of initial $\sigma^{2}$
requires $\O(q_{0}D/\sigma)$ phases, with phase requiring $\Otilde(d^{2}\sigma^{2}\log^{2}q_{0}m)$
queries from an $\O(1)$-warm start, each doubling uses $\Otilde(q_{0}d^{2}\sigma D\log^{2}q_{0}m)$
queries. As $\sigma^{2}$ is increased while $\sigma^{2}\lesssim qD\,(\Lambda^{1/2}\vee1)\log^{1/2}d$,
it takes Phase III $\Otilde(q^{1/2}q_{0}d^{2}D^{3/2}\,(\Lambda^{1/4}\vee1)\log^{2}q_{0}m)$
queries to obtain a sample whose law is $\O(1)$-close to $\mu_{m}$
in $\eu R_{q_{0}}$-divergence. Due to hypercontractivity of $\PS$
again, iterating $\psann$ toward $\mu_{m}$ $\O(d^{2}\sigma_{m}^{2}\log^{2}qm)$
times returns a sample $Z^{*}$ such that $\eu R_{2q}(\law Z^{*}\mmid\mu_{m})\leq1$.
Hence, Phase III uses 
\[
\Otilde\bpar{q^{1/2}q_{0}d^{2}D^{3/2}\,(\Lambda^{1/4}\vee1)\log^{2}q_{0}m+qd^{2}D\,(\Lambda^{1/2}\vee1)\log^{2}qm}=\Otilde\bpar{qd^{2}D^{3/2}(\Lambda^{1/4}\vee1)\log^{2}qm}
\]
 queries in total. Also, by the early-stopping result and weak triangle
inequality, we have $\eu R_{q}(\law X^{*}\mmid\bar{\pi})\leq\frac{1.1}{10}+\half$.
Since $\eu R_{\infty}(\bar{\pi}\mmid\pi)\leq\log2$, we obtain that
$\eu R_{q}(\law Z^{*}\mmid\pi)\leq1$.

As $D\asymp R$, adding up complexities of each phase bound the total
query complexity as 
\[
\Otilde\bpar{d^{2.5}+qd^{2}R^{3/2}\,(\Lambda^{1/4}\vee1)}\,.
\]
Lastly, restarting the annealing (in case of failure) increases the
total query complexity multiplicatively by $10/9$, and the conditional
law on success has an extra bias at most $\log\frac{10}{9}$ in $\eu R_{\infty}$.
Hence, by $\eu R_{q}(\law(X'\mid\text{success})\mmid\pi)\leq2$.
\end{proof}
We establish the query complexity of general logconcave sampling from
scratch.
\begin{proof}
[Proof of Corollary~\ref{cor:general-LC-comp}] We set $\eta_{\exp}=\veps_{\exp}=\veps<1/100$
and consider $q_{0}$ and $q\geq2\vee\Otilde(\log\frac{dD}{\veps})$.
By Theorem~\ref{thm:lc-warm}, the expected number of restarts of
$\psexp$ is at most $O(1)$, so the warm-start generation will be
repeated at most $O(1)$ times. Combining the complexity results in
Theorem~\ref{thm:complexity-Renyi-warm-LC} and~\ref{thm:lc-warm},
the total complexity of logconcave sampling from scratch is 
\[
\Otilde\bpar{d^{2.5}+qd^{2}R^{3/2}\,(\Lambda^{1/4}\vee1)+qd^{2}\,(\Lambda\vee1)\log^{3}\frac{1}{\veps}}\,.
\]
When $1\leq q\leq q_{*}$, we generate the $\eu R_{q_{*}}$-warmness
and use the monotonicity of $\eu R_{q}$, which introduces an extra
factor of $q_{*}=\Otilde(1)$.
\end{proof}

\section{Obstruction to further acceleration }

In \cite{KV25faster}, it was conjectured that for the uniform distribution
over any isotropic convex body $\pi$ in $\Rd$, the largest eigenvalue
$\lda_{t}:=\norm{\cov\pi\gamma_{t}}$ remains as $\Otilde(1)$ for
any $t>0$. It was indeed shown that $\lda_{t}=O(1)$ if $t\leq1$
or $t\gtrsim R\Lambda^{1/2}\log^{2}d\log^{2}\frac{R^{2}}{\Lambda}$.
Note that the latter has been slightly improved to $R\Lambda^{1/2}\log^{1/2}d$
through Lemma~\ref{lem:early-stopping}. The rationale behind the
conjecture is, as $t$ grows, the Gaussian factor $\gamma_{t}$ push
the mass toward the boundary of $\K$, which tends to increase $\lda_{t}$.

To tackle this question, let us define quadratic-tilt stability.
\begin{defn}
[Quadratic-tilt stability] Let $\pi$ be a probability measure in
$\Rd$. For $\lda>0$, we say that $\pi$ is \emph{$\lda$-quadratic-tilt
stable} if for any $t>0$,
\[
\norm{\cov\pi\gamma_{t}}\leq\lda\,.
\]
\end{defn}

Hence, the conjecture is simply to establish $\Otilde(1)$-quadratic-tilt
stability of the uniform distribution over any isotropic convex body.

In \S\ref{subsec:Acceleration-conj}, we show that one can afford
more aggressive annealing under $\Otilde(1)$-quadratic-tilt stability,
obtaining the $d^{2.5}$-complexity of isotropic logconcave sampling
(from scratch). This improves the current complexity of $d^{2.75}$
and matches the $d^{2.5}$ \emph{iteration} complexity of the $\sw$
in \cite{LV24eldan}. However, in \S\ref{subsec:counterexample},
we disprove the conjecture, illustrating the limitation of the current
annealing approach.

\subsection{Acceleration under quadratic-tilt stability\label{subsec:Acceleration-conj}}

We illustrate how the acceleration is feasible under $\lda_{t}=\Otilde(1)$
for any $t>0$. Recall the warm-start generation algorithm: (1) sample
from $\mu_{1}=\pi\gamma_{d^{-1}}$ by rejection sampling, and (2)
while $\sigma_{i}^{2}\lesssim qR\Lambda^{1/2}\log^{1/2}d$, move from
$\bar{\mu}_{i}$ to $\bar{\mu}_{i+1}$ by $\psgauss$ such that $\eu R_{q_{0}}(\bar{\mu}_{i+1}\mmid\mu_{i+2})\leq1$,
where $\sigma_{i+1}^{2}=\sigma_{i}^{2}\,(1+\nicefrac{\sigma_{i}}{q_{0}R})$.
If $\lda_{t}=\Otilde(1)$ for any $t>0$, then the main annealing
can be improved to $\sigma_{i+1}^{2}=\sigma_{i}^{2}\,(1+\nicefrac{\sigma_{i}^{2}}{q_{0}R})$.
Note that this allows faster annealing when $\sigma_{i}^{2}\geq1$.
\begin{prop}
Let $\pi$ be any isotropic logconcave distribution over $\Rd$ with
$\norm{\cov\pi\gamma_{t}}=\Otilde(1)$. For $t>0$, $q\geq2$, and
$\alpha\asymp t/\Otilde(qR)$, the probability measures $\pi\gamma_{t}$
and $\pi\gamma_{t\,(1+\alpha)}$ satisfy 
\[
\eu R_{q}(\pi_{t}\mmid\pi_{t\,(1+\alpha)})\leq\log2\,.
\]
\end{prop}

\begin{proof}
We previously used the R\'enyi version of LSI, but we use \eqref{eq:RPI-log}
this time:
\[
\eu R_{q}(\pi_{t}\mmid\pi_{t\,(1+\alpha)})\leq\log\Bpar{1-\frac{q\cpi(\pi_{t\,(1+\alpha)})}{4}\,\msf{RFI}_{q}(\pi_{t}\mmid\pi_{t\,(1+\alpha)})}^{-1}\,.
\]
Recall that
\[
\msf{RFI}_{q}(\pi_{t}\mmid\pi_{t\,(1+\alpha)})\leq\frac{q\alpha^{2}R^{2}}{t^{2}}\,.
\]
Taking $\alpha\asymp t\,(qR\,\Otilde(1))^{-1}$, we can ensure the
term inside the log is smaller than $1/2$.
\end{proof}
Hence, as each phase requires roughly $d^{2}\sigma^{2}$ queries from
an $\O(1)$-warm start, and each doubling of a given $\sigma^{2}$
only needs $q_{0}R/\sigma^{2}$ (instead of $q_{0}R/\sigma$) when
$\sigma^{2}\leq q_{0}R\approx d^{1/2}$, the complexity of each doubling
would need $d^{2}R$. Since $R^{2}=d$ due to isotropy, the final
complexity is improved to $d^{2.5}$.

We mention that isotropic uniform distributions over hypercube, simplex,
and any convex bodies of revolution (e.g., a cone, ball) actually
satisfy $O(1)$-quadratic-tilt stability. See \S\ref{sec:Quadratic-tilt-stability}.

\subsection{Counterexample\label{subsec:counterexample}}

We show that there exists an isotropic convex body which is not $O(1)$-quadratic-tilt
stable. Consider the following convex body in $\R^{d+1}$ from \cite{Bizeul26logsobolev}:
for a sufficiently large constant $C_{0}>0$,
\[
\K=(\sqrt{3}B_{\infty}^{d}\times\R)\cap\bbrace{(x,\lda)\in\Rd\times\R:\abs x\leq\sqrt{d}+C_{0}\,(1-\abs{\lda})}\,.
\]
Due to the symmetry (e.g., the sign flip of $x$ and $\lda$, and
the permutation of $x$-coordinates), the barycenter is at the origin,
and the covariance of the uniform distribution is of the form
\[
\Sigma:=\cov\pi=\left[\begin{array}{cc}
aI_{d}\\
 & b
\end{array}\right]\qquad\text{for some }a,b>0\,.
\]
Bizeul used this instance to prove the sharpness of an estimate on
sub-Gaussianity of quadratic-tilted logconcave probability measures.
Bizeul \cite{Bizeul26logsobolev} mentions $\norm{\cov\pi\gamma_{t}}\gtrsim\min(t^{-1},1+t^{2}d)$.
Here, we verify that the isotropized version of $\K$ has the peak
variance of $d^{1/3}$, thus disproving the quadratic-tilt stability
conjecture.

\subsubsection{Near-isotropy of $\protect\K$}

Below, we use $c,c',C'$ to denote universal constants, which could
be different in each line. For fixed $\lda$, the slice of $\K$ at
$\lda$ is 
\[
S_{\lda}:=\{x\in[-\sqrt{3},\sqrt{3}]^{d}:\abs x\leq r(\lda)\}\quad\text{for }r(\lda):=\sqrt{d}+C_{0}\,(1-\abs{\lda})\,,
\]
so the $\lda$-marginal of $\pi$ satisfies $\pi^{\Lambda}(\lda)\propto\vol S_{\lda}$.
For $Z\sim\Unif(\sqrt{3}B_{\infty}^{d})$,
\[
\frac{\vol S_{\lda}}{\vol(\sqrt{3}B_{\infty}^{d})}=\P\bpar{\abs Z\leq r(\lda)}\,.
\]
Recall Hoeffding's inequality: for independent random variables $X_{1},\dots,X_{d}$
with $X_{i}\in[a_{i},b_{i}]$ a.s., their sum $S_{d}=\sum_{i\in[d]}X_{i}$
satisfies that
\[
\P(S_{d}-\E S_{d}\geq s)\leq\exp\bpar{-\frac{2s^{2}}{\sum_{i\in[d]}(b_{i}-a_{i})^{2}}}\quad\text{for any }s>0\,.
\]
Using the Hoeffding, for $Z=(Z_{1},\dots,Z_{d})\sim\Unif(\sqrt{3}B_{\infty}^{d})$,
due to $\E[Z_{i}^{2}]=1$,
\[
\P(\abs Z^{2}-d\leq-s)\leq\exp\bpar{-\frac{2s^{2}}{9d}}\,.
\]
Since $\abs Z\leq\sqrt{d}-s$ implies $\abs Z^{2}\leq d-2s\sqrt{d}+s^{2}\leq d-s\sqrt{d}$
when $s\leq\sqrt{d}$, it follows that for any $s\in[0,\sqrt{d}]$,
\begin{equation}
\P(\abs Z\leq\sqrt{d}-s)\leq\exp\bpar{-\frac{2s^{2}}{9}}\quad\text{(and similarly)}\quad\P(\abs Z\geq\sqrt{d}+s)\leq\exp\bpar{-\frac{8s^{2}}{9}}\,.\label{eq:hoeffding}
\end{equation}

\paragraph{Variance in $\Lambda$-direction.}

Let us bound $b:=\var_{\pi}\Lambda$. Since $b=\E[\lda^{2}]\ge\E\bbrack{\lda^{2}\,\ind\{\abs{\lda}\in[\frac{1}{4},\half]\}}\geq\frac{1}{16}\,\P(\frac{1}{4}\leq\abs{\lda}\leq\frac{1}{2})$,
it suffices to show that $\P(\frac{1}{4}\leq\abs{\lda}\leq\frac{1}{2})=\Omega(1)$
to establish $b=\Omega(1)$. Using $\vol\K=\int\vol S_{\lda}\,\D\lda=2\int_{\geq0}\vol S_{\lda}\,\D\lda$,
\[
\P\bpar{\frac{1}{4}\leq\abs{\lda}\leq\frac{1}{2}}\gtrsim\frac{\int_{1/4}^{1/2}\vol S_{\lda}\,\D\lda}{\vol\K}=\frac{\int_{1/4}^{1/2}\vol S_{\lda}\,\D\lda}{2\int_{0}^{1+\sqrt{d}/C_{0}}\vol S_{\lda}\,\D\lda}
\]

We show that both denominator and numerator are within constant factors
of $\vol(\sqrt{3}B_{\infty}^{d})$. Using the Hoeffding \eqref{eq:hoeffding}
and taking $C_{0}$ large enough, for $\abs{\lda}\leq1/2$ (which
implies $r(\lda)\geq\sqrt{d}+C_{0}/2$),
\[
\frac{\vol S_{\lda}}{\vol(\sqrt{3}B_{\infty}^{d})}=\P\bpar{\abs Z\leq r(\lda)}\geq\P\bpar{\abs Z\leq\sqrt{d}+\frac{C_{0}}{2}}\geq1-\exp\bpar{-\Theta(C_{0}^{2})}\geq0.99\,.
\]
When $1\leq\abs{\lda}\lesssim\sqrt{d}$,
\[
\frac{\vol S_{\lda}}{\vol(\sqrt{3}B_{\infty}^{d})}=\P\bpar{\abs Z\leq\sqrt{d}-C_{0}\,(\abs{\lda}-1)}\lesssim\exp(-c\lda^{2})\,.
\]
Hence, $b=\Omega(1)$ follows from
\[
\int_{0}^{1+\sqrt{d}/C_{0}}\vol S_{\lda}\,\D\lda=\int_{0}^{1}\cdot+\int_{1}^{1+\sqrt{d}/C_{0}}\cdot\leq\vol(\sqrt{3}B_{\infty}^{d})\,\Bigl(1+\int_{1}^{1+\sqrt{d}/C_{0}}e^{-c\lda^{2}}\,\D\lda\Bigr)\lesssim\vol(\sqrt{3}B_{\infty}^{d})\,.
\]

Also, $b=O(1)$ follows from
\begin{align*}
\E[\lda^{2}] & =\E[\lda^{2}\,\ind\{\abs{\lda}\leq1\}]+\E[\lda^{2}\,\ind\{\abs{\lda}\geq1\}]\leq1+\frac{\int_{1}^{1+\sqrt{d}/C_{0}}\lda^{2}\vol S_{\lda}\,\D\lda}{\int_{0}^{1+\sqrt{d}/C_{0}}\vol S_{\lda}\,\D\lda}\\
 & \leq1+\frac{\int_{1}^{1+\sqrt{d}/C_{0}}\lda^{2}\vol S_{\lda}\,\D\lda}{\int_{0}^{1/2}\vol S_{\lda}\,\D\lda}\leq1+\frac{C'\int_{1}^{\infty}\lda^{2}\,\vol(\sqrt{3}B_{\infty}^{d})\,e^{-c\lda^{2}}\,\D\lda}{\vol(\sqrt{3}B_{\infty}^{d})}=O(1)\,.
\end{align*}

\paragraph{Variance in $X$-direction.}

Let us bound $a:=\var_{\pi}X_{i}$ for any $i\in[d]$. Since $\abs{x_{i}}\leq\sqrt{3}$,
we have $a=\E[X_{i}^{2}]\leq3$. For the lower bound, observe that
\begin{align*}
da=\E_{\pi}[\abs X^{2}] & \geq\E_{\pi}[\abs X^{2}\,\ind\{\abs{\lda}\leq1/2\}]=\E_{\pi^{\Lambda}}\bbrack{\ind\{\abs{\lda}\leq1/2\}\,\E_{\pi^{X|\Lambda=\lda}}[\abs X^{2}]}\,.
\end{align*}
For $\abs{\lda}\leq1/2$ (which is equivalent to $r(\lda)\geq\sqrt{d}+C_{0}/2$),
\begin{align*}
\E_{\pi^{X|\Lambda=\lda}}[\abs X^{2}] & =\int_{S_{\lda}}\abs x^{2}\,\frac{\D x}{\vol S_{\lda}}=\frac{\vol(\sqrt{3}B_{\infty}^{d})}{\vol S_{\lda}}\int_{\sqrt{3}B_{\infty}^{d}}\abs x^{2}\,\frac{\ind[x\in S_{\lda}]}{\vol(\sqrt{3}B_{\infty}^{d})}\,\D x\geq\int_{\sqrt{3}B_{\infty}^{d}}\abs x^{2}\,\frac{\ind[x\in S_{\lda}]}{\vol(\sqrt{3}B_{\infty}^{d})}\,\D x\\
 & =\E_{Z}[\abs Z^{2}\,\ind\{Z\in S_{\lda}\}]\geq\E_{Z}[\abs Z^{2}\,\ind\{\abs Z\leq\sqrt{d}+C_{0}/2\}]\\
 & =\E[\abs Z^{2}]-\E[\abs Z^{2}\,\ind\{\abs Z\geq\sqrt{d}+C_{0}/2\}]\\
 & \geq d-3d\,\P(\abs Z\geq\sqrt{d}+C_{0}/2)\geq d-3d\exp(-c'C_{0}^{2})\geq d/2\,,
\end{align*}
where we took $C_{0}$ large enough in the last line. Therefore, $a=\Omega(1)$
follows from
\[
da=\E_{\pi}[\abs X^{2}]\gtrsim d\,\P_{\lda}(\abs{\lda}\leq1/2)\gtrsim d\,.
\]

\subsubsection{Lower bound on the variance in $\protect\lda$-direction}

Let us denote Bizeul's convex body by $\K_{0}$ in place of $\K$.
Define the isotropic scaling 
\[
D=\mathrm{diag}(a^{-1/2}I_{d},b^{-1/2})\,,\qquad\K:=D\K_{0}\,.
\]
Then $\mu=D_{\#}\pi$ is isotropic, and $\K$ can be written as 
\[
\K=\bpar{(L\sqrt{3})B_{\infty}^{d}\times\R}\cap\{(x,\lambda):\abs x\le R(\lambda)\}\,,
\]
where $L\asymp1$, $R(\lambda)=Ld^{1/2}+c\,(1-q^{-1}\abs{\lda})$,
$q\asymp1$, and $c\asymp1$. Note that $R(\lambda)=Ld^{1/2}-c\,(\abs{\lda}/q-1)$
for $\abs{\lda}\ge q$.

\paragraph{Quadratic tilting.}

For $t>0$, define the tilted measure on $\K$ by $\mu_{t}(\D x,\D\lambda)\propto e^{-t\,(\norm x^{2}+\lda^{2})}\,\D x\D\lda$.
When $(X_{t},\Lambda_{t})\sim\mu_{t}$, it holds that $\E\Lambda_{t}=0$
(by symmetry) and $\mathrm{Var}\Lambda_{t}=\E[\Lambda_{t}^{2}]$.
\begin{prop}
\label{prop:counter-example}$\var\Lambda_{t}\gtrsim d^{1/3}$ for
$t\asymp d^{-1/3}$. Precisely, 
\[
\var\Lambda_{t}\gtrsim\begin{cases}
t^{2}d & \text{if }d^{-1/2}\lesssim t\lesssim d^{-1/3}\,,\\
t^{-1} & \text{if }d^{-1/3}\lesssim t\ll1\,.
\end{cases}
\]
\end{prop}

Let $C=[-L,L]^{d}$, and define the slice $S_{\lambda}=\{x\in C:\abs x\le R(\lambda)\}$
for $\lda\geq0$. Define the product probability measure on $C$ as
\[
Q_{t}(\D x)=\frac{\ind[x\in C]\,e^{-t\,\abs x^{2}}}{\int_{C}e^{-t\,\abs u^{2}}\,\D u}\,\D x\propto\prod_{i=1}^{d}(e^{-tx_{i}^{2}}\,\ind[\abs{x_{i}}\leq L])\,,
\]
and its $\lda$-marginal density of $\mu_{t}$ satisfies $p_{t}(\lambda)\propto e^{-t\lambda^{2}}\,Q_{t}(S_{\lda})$.

\paragraph{Concentration of the base measure $Q_{t}$.}

Let $Y=\abs X^{2}=\sum_{i=1}^{d}X_{i}^{2}$ (w.r.t. $X\sim Q_{t}$),
$m_{1}(t)=\E_{Q_{t}}[X_{1}^{2}]$, and $m(t):=\E_{Q_{t}}Y=dm_{1}(t)$.
We also define
\[
I_{0}(t)=\int_{-L}^{L}\exp(-tx_{1}^{2})\,\D x_{1}\,,\qquad Z_{d}(t)=\int_{C}\exp(-t\,\abs x^{2})\,\D x\,.
\]
Note that
\[
m_{1}(t)=-\de_{t}\log I_{0}(t)\,,\qquad m_{1}'(t)=-\var_{Q_{t}}(X_{1}^{2})\,.
\]
When $t=0$, we have $\E[X_{1}^{2}]=L^{2}$ and $\E[X_{1}^{4}]=9L^{4}/5$
, so $\var_{Q_{0}}(X_{1}^{2})=4L^{4}/5\asymp1$. Since the $X_{1}$-marginal
of $Q_{t}$ has density proportional to $\exp(-tx_{1}^{2})\,\ind[\abs{x_{1}}\leq L]$,
there exists a small constant $t_{0}$ such that for any $t\leq t_{0}$,
\[
\var_{Q_{t}}(X_{1}^{2})=\Theta(1)\,.
\]
As $m_{1}(t)-L^{2}=-\int_{0}^{t}\var_{Q_{t}}(X_{1}^{2})\,\D t=-\Theta(t)$,
we have 
\[
m(t)=dL^{2}-\Theta(dt)\,.
\]

Since $X_{i}^{2}\in[0,3L^{2}]$ are i.i.d., Hoeffding yields that
for any $s\geq0$,
\[
Q_{t}\bpar{Y-m(t)\leq-sd^{1/2}}\leq\exp\bpar{-\frac{2s^{2}}{9L^{4}}}\,,\qquad Q_{t}\bpar{Y-m(t)\geq sd^{1/2}}\leq\exp\bpar{-\frac{2s^{2}}{9L^{4}}}\,.
\]
Let $M(t)$ be a median of $Y$. Then, the Hoeffding above implies
$\abs{M(t)-m(t)}\lesssim d^{1/2}$. Since $m(t)=dL^{2}-\Theta(dt)$,
we have
\begin{equation}
M(t)=L^{2}d-\Theta(dt)\pm O(d^{1/2})\,.\label{eq:Mt-order}
\end{equation}

\paragraph{Median cutoff at $\protect\lda^{*}$.}

In the paragraph above, we have dealt with the order of the median
of $\abs X^{2}$ w.r.t.\ the base measure $Q_{t}$ (not $\mu_{t}$).
We now examine how $Q_{t}$ interacts with each slice $S_{\lda}$.

Hereafter, we assume that $t\gtrsim d^{-1/2}$, for which $M(t)\leq L^{2}d=R(q)^{2}$.
By the definition of $R(\lda)$ and $S_{\lda}$, as we increase $\abs{\lda}(\geq q)$,
the slice $S_{\lda}$ is truncated by a ball of \emph{smaller} radius
$R(\lda)$. Hence, there exists a unique $\lda^{*}$ such that 
\[
M(t)=R(\lda^{*})^{2}\,.
\]
That is, we are attempting to find the point $\lda^{*}$, where the
median of $Q_{t}$ is cut off.

Letting $x=c\,(\abs{\lda}/q-1)$, we should solve
\begin{align*}
x & =\sqrt{L^{2}d}-\sqrt{L^{2}d-\Theta(dt)\pm O(d^{1/2})}=\frac{\Theta(dt)\pm O(d^{1/2})}{\sqrt{L^{2}d}+\sqrt{L^{2}d-\Theta(dt)+O(d^{1/2})}}\\
 & =\frac{\Theta(dt)}{\sqrt{L^{2}d}+\sqrt{L^{2}d-\Theta(dt)+O(d^{1/2})}}\pm O(1)\,.
\end{align*}
When $t\ll1$ (so that $\Theta(dt)\ll L^{2}d$), we have $x\pm O(1)\asymp\frac{\Theta(dt)}{2\sqrt{L^{2}d}}\asymp t\sqrt{d}$.
Therefore, when $t\ll1$,
\[
\lda^{*}\asymp1+t\sqrt{d}\,.
\]

\paragraph{Gaussian tail of $p_{t}(\protect\lda)$ after $\protect\lda\protect\geq\protect\lda^{*}$.}

The definition of $\lda^{*}$ implies that for $\abs{\lda}\leq\lda^{*}$,
\[
Q_{t}\bpar{Y\leq R(\lda)^{2}}\geq Q_{t}\bpar{Y\leq M(t)}=\half\,.
\]
When $\lda=\lda^{*}+s$ for $s>0$, the endpoint $\lda_{\max}$ (i.e.,
$R(\lda_{\max})=0$) satisfies that $\lda_{\max}=q+qc^{-1}Ld^{1/2}$.
Thus,
\[
0\leq s\leq s_{\max}:=\lda_{\max}-\lda^{*}\leq\lda_{\max}-q\leq qc^{-1}Ld^{1/2}\,.
\]
Using $s\leq s_{\max}\leq qc^{-1}Ld^{1/2}$ and $R(\lda^{*}+s)-R(\lda^{*})=-q^{-1}cs$,
\begin{equation}
R(\lda^{*})^{2}-R(\lda^{*}+s)^{2}=q^{-1}cs\,\bpar{2R(\lda^{*})-q^{-1}cs}\geq q^{-1}cs\,\bpar{2R(\lda^{*})-Ld^{1/2}}\,.\label{eq:coreineq1}
\end{equation}
Taking $t\lesssim L^{2}$, we have
\[
R(\lda^{*})^{2}=M(t)\underset{\eqref{eq:Mt-order}}{=}L^{2}d-\Theta(dt)\pm O(d^{1/2})\geq\frac{9L^{2}d}{16}\,.
\]
Putting this back to \eqref{eq:coreineq1},
\[
R(\lda^{*})^{2}-R(\lda^{*}+s)^{2}\geq\frac{csLd^{1/2}}{2q}\,.
\]
Using $\abs{m(t)-M(t)}\lesssim\sqrt{d}$ with $R(\lda^{*})^{2}=M(t)$,
\[
R(\lda^{*}+s)^{2}\leq R(\lda^{*})^{2}-\frac{csLd^{1/2}s}{2q}=m(t)\pm O(d^{1/2})-\frac{csLd^{1/2}}{2q}\,.
\]
Hence, for $1\lesssim s\leq s_{\max}$ and some constant $c'>0$,
\[
Q_{t}\bpar{Y\leq R(\lda^{*}+s)^{2}}\leq Q_{t}\bpar{Y\leq m(t)-sd^{1/2}}\leq\exp(-c's^{2})\,.
\]

\paragraph{First regime: $d^{-1/2}\lesssim t\lesssim d^{-1/3}$.}

Let $\bar{p}_{t}(\lda)=e^{-t\lambda^{2}}\,Q_{t}(S_{\lda})$ (i.e.,
an unnormalized density of $p_{t}(\lda)$). The result above implies
that for $\lda=\lda^{*}+s$ with $s\gtrsim1$,
\[
\bar{p}_{t}(\lda^{*}+s)\leq e^{-t\,(\lda^{*}+s)^{2}}e^{-c's^{2}}\leq e^{-c's^{2}}\,.
\]
Observe that
\[
\var\Lambda_{t}=\E[\Lambda_{t}^{2}]=\frac{\int_{\geq0}\lda^{2}\bar{p}_{t}(\lda)\,\D\lda}{\int_{\geq0}\bar{p}_{t}(\lda)\,\D\lda}\,.
\]
As for the denominator,
\[
\int_{\geq0}\bar{p}_{t}(\lda)\,\D\lda=\int_{0}^{\lda^{*}+\Theta(1)}+\int_{\lda^{*}+\Theta(1)}^{\infty}\leq\lda^{*}+\Theta(1)\,.
\]
As for the numerator, since $t\,(\lda^{*})^{2}\asymp t^{3}d\lesssim1$,
we have $\exp(-t\lda^{2})\gtrsim1$ and $Q_{t}(Y\le R(\lambda)^{2})\geq1/2$
on $\lda\in[0,\lda^{*}]$. Hence,
\[
\int_{\geq0}\lda^{2}\,\bar{p}_{t}(\lda)\,\D\lda\geq\int_{0}^{\lda^{*}}\lda^{2}\,\bar{p}_{t}(\lda)\,\D\lda\gtrsim\int_{0}^{\lda^{*}}\lda^{2}\,\D\lda\asymp(\lda^{*})^{3}\,.
\]
Therefore, in the regime of $t\lesssim d^{-1/3}$,
\[
\var\Lambda_{t}\gtrsim\frac{(\lda^{*})^{3}}{1+\lda^{*}}\,.
\]
Since $\lda^{*}\asymp1+td^{1/2}\gtrsim1$ (due to $t\gtrsim d^{-1/2}$),
the variance is bounded lower by $(\lda^{*})^{2}\asymp t^{2}d$.

\paragraph{Second regime: $d^{-1/3}\lesssim t\ll1$.}

We first show that in this regime, $R(\lda)^{2}\geq M(t)$ when $\abs{\lda}\lesssim t^{-1/2}$.
For $r:=c\,(\abs{\lda}/q-1)$,
\begin{align*}
M(t) & =m(t)\pm O(d^{1/2})=L^{2}d-\Theta(dt)\pm O(d^{1/2})\,,\\
R(\lda)^{2} & =L^{2}d-2Lrd^{1/2}+r^{2}\geq L^{2}d-2rLd^{1/2}\,.
\end{align*}
Hence, $R(\lda)^{2}\geq M(t)$ holds if $\Theta(dt)-O(d^{1/2})\geq2Lrd^{1/2}$
(equivalently, $r\leq\Theta(d^{1/2}t)-O(1)$ and $\abs{\lda}\lesssim1+d^{1/2}t$).
When $d^{-1/3}\lesssim t$, we have $t^{-1/2}\lesssim d^{1/2}t$.
Then, the claim follows from
\[
\abs{\lda}\lesssim\frac{1}{t^{1/2}}\lesssim d^{1/2}t\leq1+d^{1/2}t\,.
\]

Let us lower-bound the variance. As for the denominator, due to $Q_{t}(\cdot)\leq1$,
\[
\int_{\geq0}\bar{p}_{t}(\lda)\,\D\lda\leq\int_{\geq0}e^{-t\lda^{2}}\,\D\lda\asymp\frac{1}{t^{1/2}}\,.
\]
As for the numerator, $Q_{t}(Y\leq R(\lda)^{2})\geq1/2$ on $\abs{\lda}\lesssim t^{-1/2}$.
Hence, for some constant $c'>0$, 
\[
\int\lda^{2}\,\bar{p}_{t}(\lda)\,\D\lda\gtrsim\int_{-c'/t^{1/2}}^{c'/t^{1/2}}\lda^{2}\,e^{-t\lda^{2}}\,\D\lda\asymp\frac{1}{t^{3/2}}\,.
\]
Therefore,
\[
\var\Lambda_{t}=\frac{\int\lda^{2}\bar{p}_{t}(\lda)\,\D\lda}{2\int_{\geq0}\bar{p}_{t}(\lda)\,\D\lda}\gtrsim\frac{1}{t}\,.
\]

\paragraph{}
\begin{acknowledgement*}
Part of this work was done while YK was a student researcher at Google
Research. The authors thank Sinho Chewi and Andre Wibisono for helpful
comments. This work was supported in part by NSF Award CCF-2106444
and a Simons Investigator award.
\end{acknowledgement*}
\bibliographystyle{alpha}
\bibliography{main}

@article{KVZ26INO,
  author = {Kook, Yunbum and Vempala, Santosh S. and Zhang, Matthew S.},
  journal = {Random Structures \& Algorithms},
  number = {3},
  pages = {e70061},
  title = {In-and-{O}ut: algorithmic diffusion for sampling convex bodies},
  volume = {68},
  year = {2026}}

@article{KV25localization,
  author = {Kook, Yunbum and Vempala, Santosh S.},
  journal = {arXiv preprint arXiv:2512.10848},
  title = {The localization method for high-dimensional inequalities},
  year = {2025}}

@article{Bizeul26logsobolev,
  author = {Bizeul, Pierre},
  journal = {Journal of Functional Analysis},
  month = may,
  number = {9},
  pages = {111368},
  title = {On the log-{S}obolev constant of log-concave vectors},
  volume = {290},
  year = {2026}}

@inproceedings{KV25faster,
  author = {Kook, Yunbum and Vempala, Santosh S.},
  booktitle = {Symposium on Foundations of Computer Science},
  pages = {997-1006},
  publisher = {IEEE},
  title = {Faster logconcave sampling from a cold start in high dimension},
  year = {2025}}

@article{BJ23spectral,
  author = {Bonnefont, Michel and Joulin, Ald\'eric},
  journal = {International Mathematics Research Notices},
  number = {24},
  pages = {14704--14728},
  title = {A note on the spectral gap for log-concave probability measures on convex bodies},
  year = {2024}}

@article{Klartag09Berry,
  author = {Klartag, Bo'az},
  journal = {Probability Theory and Related Fields},
  number = {1-2},
  pages = {1--33},
  title = {A {B}erry-{E}sseen type inequality for convex bodies with an unconditional basis},
  volume = {145},
  year = {2009}}

@article{van14probability,
  author = {Van Handel, Ramon},
  journal = {Lecture Notes (Princeton University)},
  number = {3},
  pages = {2--3},
  title = {Probability in high dimension},
  volume = {2},
  year = {2014}}

@article{Gross1975logarithmic,
  author = {Gross, Leonard},
  journal = {American Journal of Mathematics},
  number = {4},
  pages = {1061--1083},
  title = {Logarithmic {S}obolev inequalities},
  volume = {97},
  year = {1975}}

@article{elekes86geometric,
  author = {Elekes, Gy\"orgy},
  journal = {Discrete \& Computational Geometry},
  number = {4},
  pages = {289--292},
  title = {A geometric inequality and the complexity of computing volume},
  volume = {1},
  year = {1986}}

@article{BF87computing,
  author = {B\'ar\'any, Imre and F\"uredi, Zolt\'an},
  journal = {Discrete \& Computational Geometry},
  number = {4},
  pages = {319--326},
  title = {Computing the volume is difficult},
  volume = {2},
  year = {1987}}

@article{fernandez23ell0,
  author = {Fernandez, Manuel V},
  journal = {arXiv preprint arXiv:2312.00015},
  title = {On the $\ell_0$ isoperimetric coefficient of measurable sets},
  year = {2023}}

@article{NRS25sampling,
  author = {Narayanan, Hariharan and Rajaraman, Amit and Srivastava, Piyush},
  journal = {Probability Theory and Related Fields},
  number = {3-4},
  pages = {1169--1232},
  title = {Sampling from convex sets with a cold start using multiscale decompositions},
  volume = {191},
  year = {2025}}

@article{NS22char,
  author = {Narayanan, Hariharan and Srivastava, Piyush},
  journal = {Combinatorics, Probability and Computing},
  number = {2},
  pages = {320--332},
  title = {On the mixing time of coordinate {H}it-and-{R}un},
  volume = {31},
  year = {2022}}

@article{LV23char,
  author = {Laddha, Aditi and Vempala, Santosh S.},
  journal = {Discrete \& Computational Geometry},
  number = {2},
  pages = {406--425},
  title = {Convergence of {G}ibbs sampling: coordinate hit-and-run mixes fast},
  volume = {70},
  year = {2023}}

@article{GM83topological,
  author = {Gromov, Mikhael and Milman, Vitali D.},
  journal = {American Journal of Mathematics},
  number = {4},
  pages = {843--854},
  title = {A topological application of the isoperimetric inequality},
  volume = {105},
  year = {1983}}

@article{BU83inequality,
  author = {Borovkov, Aleksandr A. and Utev, Sergey A.},
  journal = {Teoriya Veroyatnoste\u i\ i ee Primeneniya},
  number = {2},
  pages = {209--218},
  title = {An inequality and a characterization of the normal distribution connected with it},
  volume = {28},
  year = {1983}}

@article{bobkov99isoperimetric,
  author = {Bobkov, Sergey G.},
  journal = {The Annals of Probability},
  number = {4},
  pages = {1903--1921},
  title = {Isoperimetric and analytic inequalities for log-concave probability measures},
  volume = {27},
  year = {1999}}

@article{CG23when,
  author = {Chen, Yuansi and Gatmiry, Khashayar},
  journal = {arXiv preprint arXiv:2304.04724},
  title = {When does {M}etropolized {H}amiltonian {M}onte {C}arlo provably outperform {M}etropolis-adjusted {L}angevin algorithm?},
  year = {2023}}

@article{CG23simple,
  author = {Chen, Yuansi and Gatmiry, Khashayar},
  journal = {arXiv preprint arXiv:2304.04095},
  title = {A simple proof of the mixing of {M}etropolis-adjusted {L}angevin algorithm under smoothness and isoperimetry},
  year = {2023}}

@article{CDW20fast,
  author = {Chen, Yuansi and Dwivedi, Raaz and Wainwright, Martin J. and Yu, Bin},
  journal = {Journal of Machine Learning Research},
  number = {92},
  pages = {1--72},
  title = {Fast mixing of metropolized {H}amiltonian {M}onte {C}arlo: benefits of multi-step gradients},
  volume = {21},
  year = {2020}}

@book{chewi25log,
  author = {Chewi, Sinho},
  publisher = {Book draft available at \url{https://chewisinho.github.io}},
  title = {Log-concave sampling},
  year = {2025}}

@article{CV22optimal,
  author = {Chen, Zongchen and Vempala, Santosh S.},
  journal = {Theory of Computing},
  number = {9},
  pages = {1--18},
  title = {Optimal convergence rate of {H}amiltonian {M}onte {C}arlo for strongly logconcave distributions},
  volume = {18},
  year = {2022}}

@article{BEL18sampling,
  author = {Bubeck, S\'ebastien and Eldan, Ronen and Lehec, Joseph},
  journal = {Discrete \& Computational Geometry},
  number = {4},
  pages = {757--783},
  title = {Sampling from a log-concave distribution with projected {L}angevin {M}onte {C}arlo},
  volume = {59},
  year = {2018}}

@article{KZ24covariance,
  author = {Kook, Yunbum and Zhang, Matthew S.},
  journal = {arXiv preprint arXiv:2410.17147},
  title = {Covariance estimation using {M}arkov chain {M}onte {C}arlo},
  year = {2024}}

@article{AC24faster,
  author = {Altschuler, Jason M. and Chewi, Sinho},
  journal = {Journal of the ACM},
  number = {3},
  pages = {Art. 24, 55},
  title = {Faster high-accuracy log-concave sampling via algorithmic warm starts},
  volume = {71},
  year = {2024}}

@article{KP23spectral,
  author = {Klartag, Bo'az and Putterman, Eli},
  journal = {Annales de la Facult\'{e} des Sciences de Toulouse. Math\'{e}matiques. S\'{e}rie 6},
  number = {5},
  pages = {939--967},
  title = {Spectral monotonicity under {G}aussian convolution},
  volume = {32},
  year = {2023}}

@inproceedings{CLA21opitmal,
  author = {Chewi, Sinho and Lu, Chen and Ahn, Kwangjun and Cheng, Xiang and Gouic, Thibaut Le and Rigollet, Philippe},
  booktitle = {Conference on Learning Theory},
  month = {15--19 Aug},
  pages = {1260--1300},
  publisher = {PMLR},
  title = {Optimal dimension dependence of the {M}etropolis-adjusted {L}angevin algorithm},
  volume = {134},
  year = {2021}}

@article{WSC22minimax,
  author = {Wu, Keru and Schmidler, Scott and Chen, Yuansi},
  journal = {Journal of Machine Learning Research},
  number = {270},
  pages = {1--63},
  title = {Minimax mixing time of the metropolis-adjusted {L}angevin algorithm for log-concave sampling},
  volume = {23},
  year = {2022}}

@article{CLL19exponential,
  author = {Cao, Yu and Lu, Jianfeng and Lu, Yulong},
  journal = {Journal of Statistical Physics},
  number = {5},
  pages = {1172--1184},
  title = {Exponential decay of {R}\'enyi divergence under {F}okker-{P}lanck equations},
  volume = {176},
  year = {2019}}

@article{CEL24analysis,
  author = {Chewi, Sinho and Erdogdu, Murat A. and Li, Mufan and Shen, Ruoqi and Zhang, Matthew S.},
  journal = {Foundations of Computational Mathematics},
  month = {Jul},
  title = {Analysis of {L}angevin {M}onte {C}arlo from {P}oincar\'e to log-{S}obolev},
  year = {2024}}

@article{JKO98variational,
  author = {Jordan, Richard and Kinderlehrer, David and Otto, Felix},
  journal = {SIAM Journal on Mathematical Analysis},
  number = {1},
  pages = {1--17},
  title = {The variational formulation of the {F}okker-{P}lanck equation},
  volume = {29},
  year = {1998}}

@book{MS86asymptotic,
  author = {Milman, Vitali D. and Schechtman, Gideon},
  publisher = {Springer-Verlag, Berlin},
  series = {Lecture Notes in Mathematics},
  title = {Asymptotic theory of finite-dimensional normed spaces},
  volume = {1200},
  year = {1986}}

@inproceedings{JS88conductance,
  author = {Jerrum, Mark and Sinclair, Alistair},
  booktitle = {{S}ymposium on {T}heory of {C}omputing},
  pages = {235--244},
  publisher = {ACM},
  title = {Conductance and the rapid mixing property for {M}arkov chains: the approximation of permanent resolved},
  year = {1988}}

@article{milman09role,
  author = {Milman, Emanuel},
  journal = {Inventiones Mathematicae},
  number = {1},
  pages = {1--43},
  title = {On the role of convexity in isoperimetry, spectral gap and concentration},
  volume = {177},
  year = {2009}}

@article{PW16dissipation,
  author = {Polyanskiy, Yury and Wu, Yihong},
  journal = {IEEE Transactions on Information Theory},
  number = {1},
  pages = {35--55},
  title = {Dissipation of information in channels with input constraints},
  volume = {62},
  year = {2016}}

@article{CPW18strong,
  author = {Calmon, Flavio du Pin and Polyanskiy, Yury and Wu, Yihong},
  journal = {IEEE Transactions on Information Theory},
  number = {3},
  pages = {1879--1892},
  title = {Strong data processing inequalities for input constrained additive noise channels},
  volume = {64},
  year = {2018}}

@article{chafai04entropies,
  author = {Chafa\"i, Djalil},
  journal = {Journal of Mathematics of Kyoto University},
  number = {2},
  pages = {325--363},
  title = {Entropies, convexity, and functional inequalities: on {$\Phi$}-entropies and {$\Phi$}-{S}obolev inequalities},
  volume = {44},
  year = {2004}}

@inproceedings{guo09relative,
  author = {Guo, Dongning},
  booktitle = {IEEE International Symposium on Information Theory},
  pages = {814-818},
  title = {Relative entropy and score function: New information-estimation relationships through arbitrary additive perturbation},
  year = {2009}}

@article{Barron84monotonic,
  author = {Barron, Andrew R.},
  journal = {Department of Statistics, Stanford University, California, Tech. Rep},
  title = {Monotonic central limit theorem for densities},
  volume = {50},
  year = {1984}}

@article{AG76spreading,
  author = {Ahlswede, Rudolf and G\'acs, P\'eter},
  journal = {The Annals of Probability},
  number = {6},
  pages = {925--939},
  title = {Spreading of sets in product spaces and hypercontraction of the {M}arkov operator},
  volume = {4},
  year = {1976}}

@book{evans10partial,
  author = {Evans, Lawrence C.},
  edition = {Second},
  publisher = {American Mathematical Society},
  series = {Graduate Studies in Mathematics},
  title = {Partial differential equations},
  volume = {19},
  year = {2010}}

@incollection{CG20poincare,
  author = {Cattiaux, Patrick and Guillin, Arnaud},
  booktitle = {Geometric Aspects of Functional Analysis. {V}ol. {I}},
  pages = {171--217},
  publisher = {Springer, Cham},
  series = {Lecture Notes in Math.},
  title = {On the {P}oincar\'{e} constant of log-concave measures},
  volume = {2256},
  year = {2020}}

@book{AFP00functions,
  author = {Ambrosio, Luigi and Fusco, Nicola and Pallara, Diego},
  publisher = {The Clarendon Press, Oxford University Press},
  series = {Oxford Mathematical Monographs},
  title = {Functions of bounded variation and free discontinuity problems},
  year = {2000}}

@incollection{VW23rapid,
  author = {Vempala, Santosh S. and Wibisono, Andre},
  booktitle = {Geometric Aspects of Functional Analysis: Israel Seminar (GAFA)},
  pages = {381--438},
  publisher = {Springer, Cham},
  series = {Lecture Notes in Mathematics},
  title = {Rapid convergence of the unadjusted {L}angevin algorithm: isoperimetry suffices},
  volume = {2327},
  year = {2023}}

@article{JLLV26reducing,
  author = {Jia, He and Laddha, Aditi and Lee, Yin Tat and Vempala, Santosh S.},
  journal = {Journal of the ACM},
  month = feb,
  title = {Reducing isotropy and volume to {KLS}: faster rounding and volume algorithms},
  year = {2026}}

@inproceedings{CV14cubic,
  author = {Cousins, Ben and Vempala, Santosh S.},
  booktitle = {{S}ymposium on {D}iscrete {A}lgorithms},
  pages = {1215--1228},
  publisher = {ACM},
  title = {A cubic algorithm for computing {G}aussian volume},
  year = {2014}}

@article{lovasz90compute,
  author = {Lov\'{a}sz, L\'{a}szl\'{o}},
  journal = {Jber. d. Dt. Math.-Verein, Jubil{\"a}umstagung},
  pages = {138--151},
  title = {How to compute the volume?},
  year = {1990}}

@inproceedings{Mironov17renyi,
  author = {Mironov, Ilya},
  booktitle = {Computer Security Foundations Symposium},
  pages = {263-275},
  publisher = {IEEE},
  title = {R\'{e}nyi differential privacy},
  year = {2017}}

@article{LV07geometry,
  author = {Lov\'{a}sz, L\'{a}szl\'{o} and Vempala, Santosh S.},
  journal = {Random Structures \& Algorithms},
  number = {3},
  pages = {307--358},
  title = {The geometry of logconcave functions and sampling algorithms},
  volume = {30},
  year = {2007}}

@inproceedings{AK91sampling,
  author = {Applegate, David and Kannan, Ravi},
  booktitle = {Symposium on Theory of Computing},
  pages = {156--163},
  publisher = {ACM},
  title = {Sampling and integration of near log-concave functions},
  year = {1991}}

@article{FK99lsi,
  author = {Frieze, Alan and Kannan, Ravi},
  journal = {The Annals of Applied Probability},
  number = {1},
  pages = {14--26},
  title = {Log-{S}obolev inequalities and sampling from log-concave distributions},
  volume = {9},
  year = {1999}}

@article{PW60optimal,
  author = {Payne, Lawrence E. and Weinberger, Hans F.},
  journal = {Archive for Rational Mechanics and Analysis},
  pages = {286--292 (1960)},
  title = {An optimal {P}oincar\'{e} inequality for convex domains},
  volume = {5},
  year = {1960}}

@book{BGL14analysis,
  author = {Bakry, Dominique and Gentil, Ivan and Ledoux, Michel},
  publisher = {Springer, Cham},
  title = {Analysis and geometry of {M}arkov diffusion operators},
  volume = {348},
  year = {2014}}

@article{Lovasz99hit,
  author = {Lov\'{a}sz, L\'{a}szl\'{o}},
  journal = {Mathematical Programming},
  number = {3},
  pages = {443--461},
  title = {Hit-and-run mixes fast},
  volume = {86},
  year = {1999}}

@article{Smith84HAR,
  author = {Smith, Robert L.},
  journal = {Operations Research},
  number = {6},
  pages = {1296--1308},
  title = {Efficient {M}onte {C}arlo procedures for generating points uniformly distributed over bounded regions},
  volume = {32},
  year = {1984}}

@article{Chen21almost,
  author = {Chen, Yuansi},
  journal = {Geometric and Functional Analysis},
  number = {1},
  pages = {34--61},
  title = {An almost constant lower bound of the isoperimetric coefficient in the {KLS} conjecture},
  volume = {31},
  year = {2021}}

@article{KLS95isop,
  author = {Kannan, Ravi and Lov\'{a}sz, L\'{a}szl\'{o} and Simonovits, Mikl\'{o}s},
  journal = {Discrete \& Computational Geometry},
  number = {3},
  pages = {541--559},
  title = {Isoperimetric problems for convex bodies and a localization lemma},
  volume = {13},
  year = {1995}}

@article{LS93random,
  author = {Lov\'{a}sz, L\'{a}szl\'{o} and Simonovits, Mikl\'{o}s},
  journal = {Random Structures \& Algorithms},
  number = {4},
  pages = {359--412},
  title = {Random walks in a convex body and an improved volume algorithm},
  volume = {4},
  year = {1993}}

@article{DFK91random,
  author = {Dyer, Martin and Frieze, Alan and Kannan, Ravi},
  journal = {Journal of the ACM},
  number = {1},
  pages = {1--17},
  title = {A random polynomial-time algorithm for approximating the volume of convex bodies},
  volume = {38},
  year = {1991}}

@book{GLS93geometric,
  author = {Gr{\"o}tschel, Martin and Lov\'{a}sz, L\'{a}szl\'{o} and Schrijver, Alexander},
  edition = {Second},
  publisher = {Springer-Verlag},
  series = {Algorithms and Combinatorics},
  title = {Geometric algorithms and combinatorial optimization},
  volume = {2},
  year = {1993}}

@inproceedings{KZ25Renyi,
  author = {Kook, Yunbum and Zhang, Matthew S.},
  booktitle = {Symposium on Discrete Algorithms},
  pages = {5278--5306},
  publisher = {SIAM},
  title = {R\'{e}nyi-infinity constrained sampling with $d^3$ membership queries},
  year = {2025}}

@inproceedings{KV25sampling,
  author = {Kook, Yunbum and Vempala, Santosh S.},
  booktitle = {{S}ymposium on {T}heory of {C}omputing},
  pages = {924--932},
  publisher = {ACM},
  title = {Sampling and integration of logconcave functions by algorithmic diffusion},
  year = {2025}}

@inproceedings{JLLV21reducing,
  author = {Jia, He and Laddha, Aditi and Lee, Yin Tat and Vempala, Santosh S.},
  booktitle = {{S}ymposium on {T}heory of {C}omputing},
  pages = {961--974},
  publisher = {ACM},
  title = {Reducing isotropy and volume to {KLS}: an {$O^*(n^3\psi^2)$} volume algorithm},
  year = {2021}}

@inproceedings{CV15bypass,
  author = {Cousins, Ben and Vempala, Santosh S.},
  booktitle = {{S}ymposium on {T}heory of {C}omputing},
  pages = {539--548},
  publisher = {ACM},
  title = {Bypassing {KLS}: {G}aussian cooling and an {$O^*(n^3)$} volume algorithm},
  year = {2015}}

@inproceedings{LV06fast,
  author = {Lov\'{a}sz, L\'{a}szl\'{o} and Vempala, Santosh S.},
  booktitle = {Symposium on Foundations of Computer Science},
  pages = {57-68},
  publisher = {IEEE},
  title = {Fast algorithms for logconcave functions: sampling, rounding, integration and optimization},
  year = {2006}}

@incollection{LS90mixing,
  author = {Lov\'{a}sz, L\'{a}szl\'{o} and Simonovits, Mikl\'{o}s},
  booktitle = {{S}ymposium on {F}oundations of {C}omputer {S}cience},
  pages = {346--354},
  publisher = {IEEE},
  title = {The mixing rate of {M}arkov chains, an isoperimetric inequality, and computing the volume},
  year = {1990}}

@article{eldan13thin,
  author = {Eldan, Ronen},
  journal = {Geometric and Functional Analysis},
  number = {2},
  pages = {532--569},
  title = {Thin shell implies spectral gap up to polylog via a stochastic localization scheme},
  volume = {23},
  year = {2013}}

@article{KL22Bourgain,
  author = {Klartag, Bo'az and Lehec, Joseph},
  journal = {Geometric and Functional Analysis},
  number = {5},
  pages = {1134--1159},
  title = {Bourgain's slicing problem and {KLS} isoperimetry up to polylog},
  volume = {32},
  year = {2022}}

@article{KO25strong,
  author = {Klartag, Bo'az and Ordentlich, Or},
  journal = {IEEE Transactions on Information Theory},
  number = {5},
  pages = {3317--3333},
  title = {The strong data processing inequality under the heat flow},
  volume = {71},
  year = {2025}}

@inproceedings{CCSW22improved,
  author = {Chen, Yongxin and Chewi, Sinho and Salim, Adil and Wibisono, Andre},
  booktitle = {Conference on Learning Theory},
  pages = {2984--3014},
  publisher = {PMLR},
  title = {Improved analysis for a proximal algorithm for sampling},
  volume = {178},
  year = {2022}}

@inproceedings{LST21structured,
  author = {Lee, Yin Tat and Shen, Ruoqi and Tian, Kevin},
  booktitle = {Conference on Learning Theory},
  pages = {2993--3050},
  publisher = {PMLR},
  title = {Structured logconcave sampling with a restricted {G}aussian oracle},
  volume = {134},
  year = {2021}}

@article{vH14renyi,
  author = {van Erven, Tim and Harremo{\"e}s, Peter},
  journal = {IEEE Transactions on Information Theory},
  number = {7},
  pages = {3797--3820},
  title = {R\'{e}nyi divergence and {K}ullback-{L}eibler divergence},
  volume = {60},
  year = {2014}}

@article{LV24eldan,
  author = {Lee, Yin Tat and Vempala, Santosh S.},
  journal = {Annals of Mathematics},
  number = {3},
  pages = {1043--1092},
  title = {Eldan's stochastic localization and the {KLS} conjecture: isoperimetry, concentration and mixing},
  volume = {199},
  year = {2024}}

@article{CV18Gaussian,
  author = {Cousins, Ben and Vempala, Santosh S.},
  journal = {SIAM Journal on Computing},
  number = {3},
  pages = {1237--1273},
  title = {Gaussian cooling and {$O^*(n^3)$} algorithms for volume and {G}aussian volume},
  volume = {47},
  year = {2018}}

@article{LV06simulated,
  author = {Lov\'{a}sz, L\'{a}szl\'{o} and Vempala, Santosh S.},
  journal = {Journal of Computer and System Sciences},
  number = {2},
  pages = {392--417},
  title = {Simulated annealing in convex bodies and an {$O^*(n^4)$} volume algorithm},
  volume = {72},
  year = {2006}}

@incollection{Led04spectral,
  author = {Ledoux, Michel},
  booktitle = {Surveys in differential geometry. {V}ol. {IX}},
  pages = {219--240},
  publisher = {Int. Press, Somerville, MA},
  series = {Surv. Differ. Geom.},
  title = {Spectral gap, logarithmic {S}obolev constant, and geometric bounds},
  volume = {9},
  year = {2004}}

@article{Buser82iso,
  author = {Buser, Peter},
  journal = {Annales Scientifiques de l'\'{E}cole Normale Sup\'{e}rieure. Quatri{\`e}me S\'{e}rie},
  number = {2},
  pages = {213--230},
  title = {A note on the isoperimetric constant},
  volume = {15},
  year = {1982}}

@incollection{Chee70lower,
  author = {Cheeger, Jeff},
  booktitle = {Problems in analysis},
  pages = {195--199},
  publisher = {Princeton Univ. Press, Princeton, NJ},
  title = {A lower bound for the smallest eigenvalue of the {L}aplacian},
  year = {1970}}

@inproceedings{KVZ24INO,
  author = {Yunbum Kook and Santosh S. Vempala and Matthew S. Zhang},
  booktitle = {Advances in Neural Information Processing Systems},
  pages = {108354--108388},
  title = {In-and-{O}ut: algorithmic diffusion for sampling convex bodies},
  volume = {37},
  year = {2024}}

@article{LV06hit,
  author = {Lov\'{a}sz, L\'{a}szl\'{o} and Vempala, Santosh S.},
  journal = {SIAM Journal on Computing},
  number = {4},
  pages = {985--1005},
  title = {Hit-and-Run from a corner},
  volume = {35},
  year = {2006}}

@article{Klartag23log,
  author = {Klartag, Bo'az},
  journal = {Ars Inveniendi Analytica},
  pages = {1--17},
  title = {Logarithmic bounds for isoperimetry and slices of convex sets},
  year = {2023},
  number = {4}}

@article{KLS97random,
  author = {Kannan, Ravi and Lov\'{a}sz, L\'{a}szl\'{o} and Simonovits, Mikl\'{o}s},
  journal = {Random Structures \& Algorithms},
  number = {1},
  pages = {1--50},
  title = {Random walks and an {$O^*(n^5)$} volume algorithm for convex bodies},
  volume = {11},
  year = {1997}}

\appendix

\section{Quadratic-tilt stability of structured distributions\label{sec:Quadratic-tilt-stability}}

\subsection{Hypercube }

Consider the isotropic hypercube, $\K=\otimes_{i=1}^{d}[-\sqrt{3},\sqrt{3}]$.
Then, any truncated Gaussian $\pi\gamma_{t}$ (here $\pi\propto\ind_{\K}$)
does not violate the conjecture. Since $\pi\gamma_{t}$ can be written
as $d$-tensor product of one-dimensional probability measures, $\cov\pi\gamma_{t}$
is a diagonal matrix. Clearly, each diagonal (i.e., variance along
any coordinate) is $\O(1)$ since the support has diameter of $\O(1)$.

\subsection{Simplex}

Consider the regular $d$-simplex in isotropic position. Let us embed
the simplex in the ambient $(d+1)$-dimensional space:
\[
\Delta_{d}=\Bbrace{x\in\R^{d+1}:x_{i}\geq0,\sum_{i=1}^{d+1}x_{i}=1}\,,
\]
centered at $\bar{x}=\frac{1}{d+1}\,(1,\dots,1)$, so that $y:=x-\bar{x}$
lies in the hyperplane $H:=\{y\in\R^{d+1}:\sum_{i}y_{i}=0\}$.

Now, consider the quadratically-tilted measure $\pi\gamma_{t}$ over
the isotropized simplex. Using the coordinate symmetry (i.e., permuting
coordinates), we can write $C:=\cov\pi\gamma_{t}=\alpha I_{d+1}+\beta\,(J_{d+1}-I_{d+1})$
for some $\alpha,\beta\in\R$, where $J_{d+1}$ is the all-ones matrix.
Since $y\in H$ satisfies $y\perp\mb 1$, we have $J_{d+1}y=0$. Thus,
the restriction of $C$ to $H$ satisfies
\[
Cy=(\alpha-\beta)\,y\quad\text{for all }y\in H\,,
\]
so $\cov(\pi\gamma_{t})|_{H}=(\alpha-\beta)\,I_{d}=:cI_{d}$. Using
the similar argument in \eqref{eq:cov-decrease}, 
\[
\E_{\pi\gamma_{t}}[\abs Y^{2}]=\frac{\E_{\pi}[\abs Y^{2}\,e^{-\abs Y^{2}/(2t)}]}{\E_{\pi}[e^{-\abs Y^{2}/(2t)}]}\leq\E_{\pi}[\abs Y^{2}]=d\,,
\]
where the last equality holds since $\pi$ is isotropic. Since $\E_{\pi\gamma_{t}}Y=0$
by symmetry, we have $\tr(\cov\pi\gamma_{t})=\E_{\pi\gamma_{t}}[\abs Y^{2}]\leq d$.
As $\tr(cI_{d})=cd$, it follows that $c\leq1$. Hence, $\norm{\cov\pi\gamma_{t}}=c=O(1)$.

\subsection{Isotropic convex body of revolution}

For an interval $I\subset\R$ and a concave function $r:\R\to\R_{\geq0}$,
let 
\[
\K=\{(s,y)\in\R\times\R^{d-1}:s\in I,\,\abs y\le r(s)\}\,,
\]
where the uniform distribution over $\K$ is isotropic. Consider a
quadratically-tilted measure over $\K$:
\[
\pi_{t}(\D x)\propto\exp\bpar{-\frac{\abs x^{2}}{2t}}\,\ind_{\K}(x)\,\D x\qquad\text{for }x=(s,y)\,.
\]

\subsubsection{$y$-direction}

Let $\sigma_{\perp}^{2}$ be the variance of $\pi_{t}$ in the $y$-direction.
Using the rotational symmetry and the similar argument in \eqref{eq:cov-decrease},
\[
\sigma_{\perp}^{2}\leq\frac{1}{d-1}\,\E_{\pi_{t}}[\abs Y^{2}]\leq\frac{1}{d-1}\,\E_{\pi}[\abs Y^{2}]\leq\frac{d}{d-1}=O(1)\,.
\]

\subsubsection{$s$-direction}

Conditionally on $S=s$, the radial coordinate $U=\abs Y$ has density,
for $R=r(s)$
\[
\nu_{t,R}(\D u)\propto u^{d-2}\exp\bpar{-\frac{u^{2}}{2t}}\,\ind_{[0,R]}(u)\,\D u\,.
\]
Hence, integrating $y\in\R^{d-1}$ gives the following $s$-marginal:
\[
p_{t}(s)\propto\exp\bpar{-\frac{s^{2}}{2t}}\,H_{t}\bpar{r(s)}\qquad\text{for }H_{t}(R):=\int_{0}^{R}u^{d-2}e^{-\frac{u^{2}}{2t}}\,\D u\,.
\]

\begin{lem}
\label{lem:U-average} Let $\eta_{R}$ be the probability measure
on $[0,R]$ with density proportional to $u^{d-2}$. If $f$ is increasing
and $g$ is decreasing on $[0,R]$, then $\E_{\eta_{R}}[fg]\le\E_{\eta_{R}}f\,\E_{\eta_{R}}g$.
In particular, for $f(u)=u^{2}$ and $g(u)=e^{-u^{2}/(2t)}$, 
\[
\E_{\nu_{t,R}}[U^{2}]\le\E_{\eta_{R}}[U^{2}]=\frac{d-1}{d+1}\,R^{2}\,.
\]
\end{lem}

\begin{proof}
The first claim can be deduced  from the definition of $\cov_{\eta_{R}}(f,g)$
(see \eqref{eq:cov-decrease}). For $f(u)=u^{2}$ and $g(u)=e^{-u^{2}/(2t)}$,
we have
\[
\E_{\eta_{R}}[u^{2}e^{-u^{2}/(2t)}]\leq\E_{\eta_{R}}[u^{2}]\,\E_{\eta_{R}}e^{-u^{2}/(2t)}\,.
\]
Equivalently,
\[
\int_{0}^{R}u^{d-2}\,\D u\int_{0}^{R}u^{2}u^{d-2}e^{-u^{2}/(2t)}\,\D u\leq\int_{0}^{R}u^{2}u^{d-2}\,\D u\int_{0}^{R}u^{d-2}e^{-u^{2}/(2t)}\,\D u\,,
\]
which implies that 
\[
\E_{\nu_{t,R}}[U^{2}]\le\E_{\eta_{R}}[U^{2}]=\frac{\int_{0}^{R}u^{d}\,\D u}{\int_{0}^{R}u^{d-2}\,\D u}=\frac{R^{d+1}/(d+1)}{R^{d-1}/(d-1)}=\frac{d-1}{d+1}\,R^{2}\,.
\]
This completes the proof.
\end{proof}

\paragraph{Maximum of $r(s)$.}

From above, $\E[\abs Y^{2}\mid S=s]=\frac{d-1}{d+1}\,r(s)^{2}$. Define
the axial density $p_{0}(s)\propto r(s)^{d-1}\,\ind_{I}(s)$ and recall
that $p_{0}$ is isotropic logconcave on $\R$. Taking expectation
with respect to $s\sim p_{0}$, 
\[
d-1=\E_{\pi}[\abs Y^{2}]=\frac{d-1}{d+1}\,\E_{p_{0}}[r(s)^{2}]\,,
\]
so 
\[
\frac{\int_{I}r(s)^{d+1}\,\D s}{\int_{I}r(s)^{d-1}\,\D s}=\E_{p_{0}}[r(s)^{2}]=d+1\,.
\]

Using this and concavity of $r(s)$, we show the following:
\begin{lem}
\label{lem:max-r} In the setting above,
\[
r_{\max}^{2}=\max_{s\in I}r(s)^{2}\leq16d\,.
\]
\end{lem}

\begin{proof}
Let $R=\max_{I}r(s)$, achieved at $s^{*}$. Then, it follows from
the concavity of $r(s)$ that the set $J:=\{s:r(s)\geq R/2\}$ has
length at least $\abs I/2$. To see this, consider the chord between
the left endpoint $(a,r(a))$ and $(s^{*},R)$. Concavity of $r(s)$
implies that $r$ is above $R/2$ (as $r(a)\geq0$). Repeating the
same argument for the right endpoint, the claim follows.

Let us show that 
\[
\frac{\int_{I}r(s)^{d+1}\,\D s}{\int_{I}r(s)^{d-1}\,\D s}\gtrsim R^{2}\,.
\]
To this end, using $\abs J\geq\abs{J^{c}}$,
\[
\int_{I}r(s)^{d-1}\,\D s=\int_{J}+\int_{J^{c}}\leq\int_{J}+\int_{J}=2\int_{J}r(s)^{d-1}\,\D s\,.
\]
Also,
\[
\int_{I}r(s)^{d+1}\,\D s\geq\int_{J}r(s)^{d+1}\,\D s\geq\frac{R^{2}}{4}\int_{J}r(s)^{d-1}\,\D s\,.
\]
Hence,
\[
(d+1=\E_{p_{0}}[r(s)^{2}]=)\frac{\int_{I}r(s)^{d+1}\,\D s}{\int_{I}r(s)^{d-1}\,\D s}\geq\frac{R^{2}}{8}\,,
\]
which completes the proof.
\end{proof}

\paragraph{Main proof.}

We need some identity:
\begin{lem}
For $H_{t}(R)=\int_{0}^{R}u^{d-2}e^{-u^{2}/(2t)}\,\D u$, it holds
that
\[
\frac{d-1}{R}\,\bpar{1-\frac{R^{2}}{t\,(d+1)}}\leq\frac{H_{t}'(R)}{H_{t}(R)}=\frac{d-1}{R}-\frac{1}{tR}\,\E_{\nu_{t,R}}[U^{2}]\leq\frac{d-1}{R}\,.
\]
In particular, if $R^{2}\le t\,(d+1)/2$, then $\frac{H_{t}'(R)}{H_{t}(R)}\ge\frac{d-1}{2R}$.
\end{lem}

\begin{proof}
Using IBP and $H_{t}'(R)=R^{d-2}e^{-R^{2}/2t}$,
\[
H_{t}(R)=e^{-\frac{R^{2}}{2t}}\,\frac{R^{d-1}}{d-1}+\frac{1}{d-1}\int_{0}^{R}\frac{u^{d}}{t}\,e^{-\frac{u^{2}}{2t}}\,\D u=\frac{R}{d-1}\,H_{t}'(R)+\frac{1}{d-1}\int_{0}^{R}\frac{u^{d}}{t}\,e^{-\frac{u^{2}}{2t}}\,\D u\,.
\]
Dividing both sides by $RH_{t}(R)/(d-1)$,
\[
\frac{H_{t}'(R)}{H_{t}(R)}=\frac{d-1}{R}-\frac{1}{tR}\frac{\int_{0}^{R}u^{d}e^{-\frac{u^{2}}{2t}}\,\D u}{\int_{0}^{R}u^{d-2}e^{-\frac{u^{2}}{2t}}\,\D u}=\frac{d-1}{R}-\frac{1}{tR}\,\E_{\nu_{t,R}}[U^{2}]\,.
\]
The claim follows from Lemma~\ref{lem:U-average}.
\end{proof}
Since $p_{0}$ is isotropic, we can take a mode within $[-10,10]$
(see Claim~\ref{claim:mode-iso}). By Claim~\ref{claim:lc-universal},
there exist universal $A,c>0$ such that $(\log p_{0})'(s)\le-c$
for all $s\ge A$. Differentiating $\log p_{t}(s)=-\frac{s^{2}}{2t}+\log H_{t}(r(s))+\mathrm{const}$
yields 
\[
(\log p_{t})'(s)=-\frac{s}{t}+r'(s)\,\frac{H_{t}'\bpar{r(s)}}{H_{t}\bpar{r(s)}}\,.
\]
Using Lemma~\ref{lem:max-r}, we take $t\geq32$ to satisfy $\frac{H_{t}'(r(s))}{H_{t}(r(s))}\ge\frac{d-1}{2r(s)}$.
On the right tail (where $r'(s)\le0$ by concavity), we have 
\[
(\log p_{t})'(s)\le-\frac{s}{t}+\frac{d-1}{2}\,\frac{r'(s)}{r(s)}=-\frac{s}{t}+\frac{1}{2}\,(\log p_{0})'(s)\le-\frac{c}{2}\qquad\text{for }s\geq A\,.
\]
Thus, $p_{t}$ has a universal exponential right tail, which implies
that second moment contributed by the right tail (after $A=\Omega(1)$)
is $O(1)$. We can handle the left tail in a similar manner. Also,
the second moment coming from $[-10,10]$ is $O(1)$. Hence, $\E_{p_{t}}[S^{2}]=O(1)$
and $\mathrm{Var}_{p_{t}}S=O(1)$ for $t\in[32,d^{1/2}]$. We can
handle the case of $t\in[1,32]$ by the Brascamp--Lieb inequality
inequality.

\paragraph{Missing details.}
\begin{claim}
\label{claim:mode-iso}The mode of an isotropic logconcave distribution
in $\R$ is within $[-10,10]$.
\end{claim}

\begin{proof}
Let $g$ be the pdf of the isotropic logconcave distribution, and
$m$ a mode of $g$. If $m>10$, then log-concavity implies that $g$
is non-decreasing on $(-\infty,m]$. Thus, on $[0,10]$, we have $g(x)\geq g(0)\geq1/8$
from \cite{LV07geometry},
\[
1=\int g(x)\,\D x\geq\int_{0}^{10}g(x)\,\D x\geq\frac{10}{8}\,,
\]
which is contradiction. Repeating the same argument for $m<-10$,
we conclude that $\abs m\leq10$.
\end{proof}
Let $A\geq11$. Then, $g$ is non-increasing on $[A-1,\infty)$ since
$m\leq10$. Hence,
\[
g(A)\leq\int_{A-1}^{\infty}g(u)\,\D u=\P(X\geq A-1)\leq\P(\abs X\geq A-1)\leq e^{-(A-1)+1}=e^{-A+2}\,.
\]
Hence,
\[
\frac{g(A)}{g(0)}\leq8e^{-A+2}\,.
\]

\begin{claim}
\label{claim:lc-universal} There exist universal $A,c>0$ such that
$(\log p_{0})'(s)\le-c$ for all $s\ge A$.
\end{claim}

\begin{proof}
Take the concave function $\phi(s)=\log g(s)$, where $g(s)$ is the
pdf of the isotropic logconcave $p_{0}$. Then, for $A\geq11$,
\[
\phi'(A)\leq\frac{\phi(A)-\phi(0)}{A}=\frac{1}{A}\,\log\frac{g(A)}{g(0)}\leq\frac{1}{A}\,(\log8-A+2)\,.
\]
For $A=\Omega(1)$, $\phi'(A)\leq-1/2$. Since $\phi'$ is non-increasing
due to concavity, for all $x\geq A$,
\[
\phi'(x)=(\log g)'(x)\leq\phi'(A)\leq-1/2\,,
\]
which completes the proof.
\end{proof}

\section{Related work\label{subsec:Related-work}}

We survey the literature on applications of zeroth-order logconcave
sampling.

\subsection{More on uniform sampling}

 In early work, researchers used many pioneering ideas to obtain
polynomial-time algorithms. In \cite{DFK91random}, the input body
$\K$ is first mollified and then sampled using the $\msf{Grid}\text{ }\msf{walk}$,
a random walk on a suitably fine lattice over the body. This algorithm
is then analyzed through the conductance of the resulting Markov chain,
the well-known framework developed by Jerrum and Sinclair \cite{JS88conductance}.
To use this, they applied an isoperimetric inequality of Milman and
Schechtman \cite{MS86asymptotic}, showing that the $\msf{Grid}\text{ }\msf{walk}:\eu R_{\infty}\to\tv$
requires $\poly(d,D,\nicefrac{M_{\infty}}{\veps})$ many queries to
achieve $\veps$-$\tv$ distance.

Lov\'asz and Simonovits \cite{LS90mixing} generalized the conductance
framework to continuous domains, introducing \emph{$s$}-conductance
that lower bounds the conductance of the chain for subsets of measure
at least $s>0$. To prove lower bounds on $s$-conductance, they developed
the well-known localization lemma, proving a Cheeger isoperimetric
inequality for $\pi_{\K}$, with $\cch(\pi_{\K})\gtrsim D^{-1}$ (which
was originally established by Payne and Weinberger \cite{PW60optimal}).
These new tools also improved the complexity of the $\msf{Grid}\text{ }\msf{walk}$,
although it still has dependence of $\poly(d,D,\nicefrac{M_{\infty}}{\veps})$.

As mentioned in the main body of the paper, Lov\'asz and Simonovits
\cite{LS93random} used the $\bw$ to develop an indirect approach
with improved complexity $\poly(d,D,\log\nicefrac{M_{\infty}}{\veps})$.
It was achieved by refining their earlier conductance framework and
localization machinery; they proved that any logconcave distribution
$\pi$ supported on a convex body of diameter $D$ also satisfies
$\cch(\pi)\gtrsim D^{-1}$. \cite{KLS97random} provided a refined
version of these results with improved isoperimetry, an efficient
rounding algorithm, the speedy walk and other extensions. The study
of logconcave sampling was initiated by Applegate and Kannan \cite{AK91sampling}.
Frieze and Kannan \cite{FK99lsi} used a log-Sobolev inequality together
with a grid walk to give an improved complexity for logconcave sampling. 

Recently, $\chr$ ($\msf{CHR}$), a special variant of $\har$, has
been shown to mix in polynomial time with respect to the dimension
and ($\ell_{2}$ or $\ell_{\infty}$) diameter \cite{NS22char,LV23char,fernandez23ell0},
and with a logarithmic dependence on the initial warmness \cite{NRS25sampling}.
This thus results in $\msf{CHR}:\eu R_{2}\to\eu R_{2}$, although
the mixing time is a larger polynomial than that of hit-and-run.

\subsection{KLS conjecture}

The Kannan-Lov\'asz-Simonovits (KLS) conjecture \cite{KLS95isop}
posits that the Cheeger constant of any logconcave probability measure
is bounded below by $\Omega(\norm{\cov\pi}^{-1/2})$. Starting from
$\cch^{2}\gtrsim\tr^{-1}(\cov)$, subsequent work \cite{eldan13thin,LV24eldan,Chen21almost,KL22Bourgain,Klartag23log}---notably
through Eldan's stochastic localization---has improved it to $\cch^{2}\gtrsim(\norm{\cov\pi}\log d)^{-1}$
\cite{Klartag23log}. 

\subsection{Rounding}

A notable application is the rounding of convex bodies, or more generally,
logconcave functions. It aims to make the geometry of the input object
more regular---typically by applying an affine transformation that
brings it into a nearly-isotropic position. This step is often a necessary
preprocessing phase for achieving a fully polynomial-time algorithm,
since worst-case shapes (e.g., highly skewed bodies) can cause sampling
algorithms to suffer from poor mixing.

Earlier approaches to rounding relied on deterministic methods based
on the ellipsoid algorithm \cite{DFK91random,LS90mixing}, which guarantees
$B_{1}(0)\subset\K\subset B_{d^{3/2}}(0)$ after a suitable affine
transformation. To improve the sandwiching ratio, randomized rounding
methods were pioneered in~\cite{LS93random} and further developed
in~\cite{KLS97random,LV06simulated,JLLV21reducing,JLLV26reducing}.
These methods use sampling to estimate the covariance matrix and then
apply affine transformations to round the body. The best known complexity
for isotropic rounding of a convex body is $\Otilde(d^{3.5}\polylog R)$
\cite{JLLV26reducing}, with the analysis streamlined in~\cite{KZ24covariance}.
These rounding techniques have been extended to logconcave distributions~\cite{LV06fast,KV25sampling},
while preserving the query complexity.

\subsection{Volume computation and integration}

One of the most well-known applications of uniform sampling is volume
computation, a problem with a long and rich history in theoretical
computer science and convex geometry; given $\veps>0$, the goal is
to estimate the volume $V$ of a convex body within a relative $\veps$-error,
$(1-\veps)\vol\K\leq V\leq(1+\veps)\vol\K$.

Computing the volume of a high-dimensional convex body is known to
be intractable via deterministic algorithms~\cite{elekes86geometric,BF87computing}.
The breakthrough result by Dyer, Frieze, and Kannan~\cite{DFK91random}
showed that this task admits a randomized polynomial-time algorithm
using random walks. The standard technique introduced therein, referred
to as the \emph{multi-phase Monte Carlo} method, constructs a sequence
of convex bodies such that each successive pair is close in some sense.
Rewriting the volume of $\K$ as a telescoping product of volume ratios
between successive bodies, one can efficiently estimate each ratio
using samples obtained via uniform sampling over convex bodies.

This framework initiated a long line of research focused on improving
the mixing time, oracle complexity, and practical implementability
of such algorithms~\cite{lovasz90compute,AK91sampling,LS90mixing,LS93random,KLS95isop,KLS97random,LV06hit,CV15bypass,CV18Gaussian,JLLV21reducing}.
The current fastest algorithm for well-rounded convex bodies (a weaker
condition than near-isotropic) is due to \cite{CV18Gaussian} with
complexity $\Otilde(d^{3}/\veps^{2})$. A natural generalization of
volume computation is the integration of a logconcave function. This
general problem was studied in~\cite{LV06fast,KV25sampling}, establishing
the same complexity $\Otilde(d^{3}/\veps^{2})$ when the corresponding
distribution is well-rounded; this can be applied after rounding for
an arbitrary logconcave function. See \cite[\S5.2]{KV25sampling}
for a streamlined analysis.
\end{document}